\documentclass[10pt]{amsart}
\linespread{0,9}
\usepackage{tgpagella}
\usepackage{ifpdf}

\usepackage[english]{babel}
\usepackage{inputenc}
\usepackage{mathabx}
\usepackage{amsmath,amsthm,amsfonts,amssymb,amscd}
\usepackage{palatino}
\usepackage{mathpazo}
\usepackage{amssymb,amsfonts}
\usepackage[all,arc]{xy}
\usepackage{enumerate}
\usepackage{mathrsfs}
\usepackage{tgtermes}
\usepackage{marginnote}
\usepackage{mathtools}
\usepackage{amsthm,thmtools,xcolor}

\theoremstyle{plain}
\newtheorem{thm}{Theorem}[section]
\newtheorem{cor}[thm]{Corollary}
\newtheorem{prop}[thm]{Proposition}
\newtheorem{lem}[thm]{Lemma}

\newtheorem{thmx}{Theorem}
 
\theoremstyle{definition}

\theoremstyle{remark}
\newtheorem{rem}[thm]{\textit{Remark}}

\usepackage{color}

\usepackage[colorlinks = true,
linkcolor = blue,
urlcolor  = blue,
citecolor = blue,
anchorcolor = blue]{hyperref}
\usepackage{hyperref}
\makeatletter
\let\c@equation\c@thm
\makeatother
\numberwithin{equation}{section}
\DeclareMathOperator{\ex}{e}
\DeclareMathOperator{\dist}{\mathrm{dist}}
\DeclareMathOperator{\sgn}{\mathrm{sgn}}
\DeclareMathOperator{\supp}{supp}
\bibliographystyle{plain}
\newcommand\underrel[3][]{\mathrel{\mathop{#3}\limits_{%
			\ifx c#1\relax\mathclap{#2}\else#2\fi}}}
%

\begin{document}
\title[Dispersion Generalized Benjamin-Ono Equation]{On the propagation of regularity for solutions of   the  Dispersion Generalized Benjamin-Ono Equation}

\author{Argenis. J. Mendez}

\address{Instituto Nacional  de  Matematica Pura e  Aplicada, Rio de Janeiro, RJ, Brasil}

\email{amendez@impa.br}

\thanks{This work was partially supported by  CNPq, Brazil.}


\subjclass{Primary: 35Q53. Secondary: 35Q05}

\keywords{Dispersion generalized Benjamin-Ono equation, Well-posedness, Propagation of regularity, Refined Strichartz}
\date{December, 2018.}
\dedicatory{To my parents}
\commby{Argenis Mendez}
\begin{abstract}
   In this paper we study  some  properties of propagation of regularity  of solutions of the dispersive generalized Benjamin-Ono (BO) equation. This model defines a   family of dispersive equations,  that can be seen as  a dispersive interpolation between Benjamin-Ono equation and Korteweg-de Vries (KdV) equation. 

  Recently, it has been shown that  solutions of the  KdV  equation and Benjamin-Ono equation, satisfy the following property:  if the   initial data  has    some  prescribed regularity on the right hand side of the  real line, then this  regularity is propagated with infinite speed by the flow solution.
    
 In this case the nonlocal term present in the dispersive generalized Benjamin-Ono equation  is more challenging that the one  in BO equation. To deal with this a new approach is needed. The new ingredient is to combine  commutator expansions into the weighted energy estimate. This allow us to obtain the property of propagation  and explicitly the smoothing effect.
\end{abstract}
\maketitle

\section{Introduction}
The aim of this work is to study some  special regularity properties of
solutions to the initial value problem (IVP) associated to   the \emph{dispersive generalized Benjamin-Ono equation}
\begin{equation}\label{eq7}
\left\{
\begin{array}{ll}
\partial_{t}u-D_{x}^{\alpha+1}\partial_{x}u+u\partial_{x}u=0, & x,t\in\mathbb{R},\,0<\alpha<1, \\
u(x,0)=u_{0}(x),&  \\
\end{array} 
\right.
\end{equation}
where $D_{x}^{s},$ denotes the homogeneous derivative of order $s\in \mathbb{R},$
\begin{equation*}
D_{x}^{s}=(-\partial_{x}^{2})^{s/2}\quad\mbox{thus}\quad  D_{x}^{s}f=c_{s}\left(|\xi|^{s}\widehat{f}(\xi)\right)^{\widecheck{}},\quad 
\end{equation*}
which in its polar form is  decomposed as $ D_{x}^{s}=(\mathcal{H}\partial_{x})^{s},$  where $\mathcal{H}$ denotes the \emph{Hilbert transform},
\begin{equation*}
\mathcal{H}f(x)
=\frac{1}{\pi}\lim_{\epsilon\rightarrow 0^{+}}\int_{|y|\geq \epsilon}\frac{f(x-y)}{y}\,\mathrm{d}y=(-i\sgn(\xi)\widehat{f}(\xi))^{\widecheck{\hspace{2mm}}}(x),
\end{equation*}
where    $\widehat{\cdot}$ denotes the Fourier transform and $\widecheck{\hspace{2mm}}$ denotes its inverse.  These equations model vorticity waves in the coastal zone, see \cite{MST} and references therein.
 
Our starting point  is a property established by Isaza, Linares and Ponce \cite{ILP1} concerning the solutions  of the IVP associated to the $k-$generalized KdV equation
\begin{equation}\label{eqdgb1}
\left\{
\begin{array}{ll}
\partial_{t}u+\partial_{x}^{3}u+u^{k}\partial_{x}u=0, & x,t\in\mathbb{R},\,k\in\mathbb{N}, \\
u(x,0)=u_{0}(x).&  
\end{array} 
\right.
\end{equation}
 It was shown   in \cite{ILP1} that  the unidirectional dispersion of the  $k-$generalized KdV equation  entails  the following propagation of regularity phenomena.
\begin{thm}[\cite{ILP1}]\label{m7}
	If $u_{0}\in H^{3/4^{+}}(\mathbb{R})$ and for some $l\in \mathbb{Z},\, l\geq 1$ and  $x_{0}\in \mathbb{R}$
	\begin{equation}\label{m9}
	\left\|\partial_{x}^{l}u_{0}\right\|_{L^{2}((x_{0},\infty))}^{2}=\int_{x_{0}}^{\infty}\left|\partial_{x}^{l}u_{0}(x)\right|^{2}\,\mathrm{d}x<\infty,
	\end{equation}
	then the solution of the IVP associated to \eqref{eqdgb1} 
	satisfies  that for any $v>0$ and $\epsilon>0$
	\begin{equation}\label{m10}
	\sup_{0\leq t\leq  T}\int_{x_{0}+\epsilon-vt}^{\infty} \left(\partial_{x}^{j}u\right)^{2}(x,t)\,\mathrm{d}x<c,
	\end{equation}
	for $j=0,1,2,\dots,l$ with $c=c\left(l;\|u_{0}\|_{H^{3/4^{+}}(\mathbb{R})};  \|\partial_{x}^{l}u_{0}\|_{L^{2}((x_{0},\infty))};v;\epsilon;T\right).$
	In particular, for  all $t\in(0,T],$ the restriction  of $u(\cdot,t)$ to any interval  $(x_{0},\infty)$ belongs to  $H^{l}((x_{0},\infty)).$
	
	Moreover, for any $v\geq 0,\, \epsilon>0$ and $R>0$
	\begin{equation*}
	\int_{0}^{T}\int_{x_{0}+\epsilon-vt}^{x_{0}+R-vt} \left(\partial_{x}^{l+1}u\right)^{2}(x,t)\,\mathrm{d}x\,\mathrm{d}t<c,
	\end{equation*}
	with $c=c\left(l;\|u_{0}\|_{H^{3/4^{+}}(\mathbb{R})};  \|\partial_{x}^{l}u_{0}\|_{L^{2}((x_{0},\infty))};v;\epsilon;R;T\right).$
\end{thm}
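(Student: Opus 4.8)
The plan is to run a simultaneous induction on $j$, coupling two families of \emph{a priori} estimates that feed one another: a \textbf{propagation} bound $\sup_{0\le t\le T}\int (\partial_x^{j}u)^2\,\phi_j\,dx<\infty$ and a \textbf{local smoothing} bound $\int_0^T\!\!\int (\partial_x^{j+1}u)^2\,\phi_j'\,dx\,dt<\infty$, where the weights are of the moving form $\phi_j(x,t)=\chi_j(x+\epsilon_j-vt)$ with $\chi_j\in C^\infty$, $0\le\chi_j\le1$, $\chi_j'\ge0$, $\chi_j$ vanishing to the left and equal to $1$ to the right of a narrow transition interval. The parameters must be \emph{nested}, $\epsilon_0<\epsilon_1<\dots<\epsilon_l=\epsilon$ with shrinking transition regions, arranged so that $\supp(\phi_j')\subset\{\phi_{j-1}'>0\}$; this is what lets the error terms produced at step $j$ be absorbed by the smoothing gained at step $j-1$. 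Before starting I would record two external inputs: local well-posedness in $H^{3/4^+}$ together with the Kenig--Ponce--Vega smoothing and maximal-function estimates, which give $\int_0^T\|\partial_x u(t)\|_{L^\infty_x}\,dt\le c(\|u_0\|_{H^{3/4^+}};T)$ --- the threshold $3/4^+$ being exactly what makes this finite --- and a localization/interpolation lemma showing that \eqref{m9} for the top index $l$ forces $\partial_x^{j}u_0\in L^2((x_0',\infty))$ for every $0\le j\le l$ and every $x_0'>x_0$, so each level has finite initial weighted energy.

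The core computation is the weighted energy identity for $v_j:=\partial_x^{j}u$, which solves $\partial_t v_j+\partial_x^3 v_j+\partial_x^{j}(u^k\partial_x u)=0$. Multiplying by $v_j\phi_j$ and integrating by parts in the dispersive term yields
\begin{equation*}
\tfrac12\frac{d}{dt}\int v_j^2\phi_j\,dx+\tfrac32\int (\partial_x v_j)^2\phi_j'\,dx=\tfrac12\int v_j^2\,\partial_t\phi_j\,dx+\tfrac12\int v_j^2\,\phi_j'''\,dx-\int \partial_x^{j}(u^k\partial_x u)\,v_j\phi_j\,dx .
\end{equation*}
The second term on the left is the \emph{good} term, nonnegative since $\chi_j'\ge0$, and it will supply the smoothing bound at level $j$. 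On the right, $\partial_t\phi_j=-v\,\phi_j'\le0$ is harmless (it even has a favourable sign for $v>0$); the term $\tfrac12\int v_j^2\phi_j'''$ is supported in the transition region and, by the nesting, is dominated by $\int v_j^2\phi_{j-1}'$, whose time integral is finite by the smoothing estimate at level $j-1$.

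The delicate part is the nonlinear term. Expanding $\partial_x^{j}(u^k\partial_x u)$ by Leibniz, the top-order contribution $u^k\partial_x v_j$ is integrated by parts against $v_j\phi_j$ to produce $\tfrac12\int(k u^{k-1}u_x\phi_j+u^k\phi_j')v_j^2$; the first piece is a Gronwall term bounded by $\|u\|_{L^\infty}^{k-1}\|u_x\|_{L^\infty}\int v_j^2\phi_j$, and the second is again of transition-region type controlled by level $j-1$. The $m=1$ Leibniz term contributes another multiple of $\|u\|_{L^\infty}^{k-1}\|u_x\|_{L^\infty}\int v_j^2\phi_j$, while all remaining terms carry at most $j-1$ derivatives on the most-differentiated factor and, localized where $\phi_j>0$, are estimated by placing one factor in $L^\infty$ (via the maximal-function bound) and the others in $L^2$ against the already-established propagation and smoothing norms of lower order. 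The outcome is a differential inequality
\begin{equation*}
\frac{d}{dt}\int v_j^2\phi_j\,dx+3\int (\partial_x v_j)^2\phi_j'\,dx\le c\,\|u_x(t)\|_{L^\infty}\int v_j^2\phi_j\,dx+g_j(t),\qquad \int_0^T g_j(t)\,dt<\infty,
\end{equation*}
with finite initial data. Gronwall's inequality, using $\int_0^T\|u_x\|_{L^\infty}dt<\infty$, closes the propagation bound at level $j$, and then integrating in time returns the smoothing bound at level $j$. Taking $j=l$ and choosing $\chi_l\equiv1$ to the right of $x_0+\epsilon-vt$ gives \eqref{m10}, while the good term with $\chi_l'\approx1$ on $(x_0+\epsilon-vt,x_0+R-vt)$ gives the final local smoothing estimate (the endpoint $v=0$ being admissible there since that term needs no favourable transport sign). \textbf{The main obstacle} I anticipate is twofold: engineering the nested weights so that every error term at step $j$ is genuinely dominated by the smoothing produced at step $j-1$, and controlling the intermediate Leibniz terms so that the only surviving Gronwall factor is $\|u_x\|_{L^\infty}$ --- it is precisely the finiteness of $\int_0^T\|u_x\|_{L^\infty}dt$, hence the hypothesis $u_0\in H^{3/4^+}$, that makes the induction close.
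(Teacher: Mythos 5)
This theorem is not proved in the paper: it is quoted from Isaza--Linares--Ponce \cite{ILP1}, and the only guidance the paper gives is that the proof is an induction on weighted energy estimates whose iterative step is powered by Kato's local smoothing \eqref{smooth}. Your proposal reconstructs exactly that architecture --- the coupled propagation/smoothing induction, nested cutoffs with transition-region errors absorbed by the previous level's smoothing, Gronwall closed by $\int_0^T\|\partial_xu\|_{L^\infty}\,\mathrm{d}t<\infty$ from the $H^{3/4^+}$ local theory --- and it is the same scheme the paper itself adapts in Section 6 for Theorem \ref{A}. The energy identity, the coefficient $\tfrac32$ on the good term, and the treatment of the nonlinear Leibniz terms are all as in \cite{ILP1}.

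There is, however, one concrete error in the setup. The conclusion \eqref{m10} concerns the region $(x_0+\epsilon-vt,\infty)$, which \emph{expands to the left} as $t$ grows; the correct weight is therefore $\chi_j(x-x_0-\epsilon_j+vt)$ (the paper uses $\chi_{\epsilon,b}^2(x+vt)$), for which $\partial_t\phi_j=+v\,\chi_j'\geq 0$ and the transport term $\tfrac12\int v_j^2\,\partial_t\phi_j\,\mathrm{d}x=\tfrac{v}{2}\int v_j^2\chi_j'\,\mathrm{d}x$ sits on the right-hand side with an \emph{unfavourable} sign. Your weight $\chi_j(x+\epsilon_j-vt)$ has its transition moving to the right, which is why you find $\partial_t\phi_j=-v\phi_j'\leq 0$ "favourable"; but with that choice you would only prove propagation on a shrinking region, not \eqref{m10}. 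This is not a cosmetic point: the fact that the speed $v>0$ may be taken arbitrarily large is the content of the theorem, and the price for it is precisely the positive term $\tfrac{v}{2}\int v_j^2\chi_j'$, which must be absorbed by the level-$(j-1)$ smoothing (for $j=1$, by Kato's identity \eqref{smooth} itself; this is exactly how the paper handles the term $A_1(t)$ in its own induction). The repair is immediate, since this term is of the same type as your $\tfrac12\int v_j^2\phi_j'''$ term and is dominated by the same previous-level smoothing norm under your nesting condition --- but as written the claim that the transport term is harmless by sign is false for the statement being proved, and it obscures the mechanism that makes infinite-speed propagation work.
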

The proof of Theorem \ref{m7}  is based  on  weighted energy estimates. In detail, the  iterative process in the induction argument is based in a property discovered originally  by T. Kato \cite{KATO1} in the context of the KdV equation. More precisely, he showed that  solution of the KdV equation  satisfies 
\begin{equation}\label{smooth}
\int_{0}^{T}\int_{-R}^{R}\left(\partial_{x}u\right)^{2}(x,t)\,\mathrm{d}x\,\mathrm{d}t\leq c\left(R;T;\|u_{0}\|_{L^{2}_{x}}\right),
\end{equation} 
 being this  the fundamental fact in his  proof of existence  of the global  weak solutions  of \eqref{eqdgb1}, for $k=1$ and  initial data in $L^{2}(\mathbb{R}).$
 
This result  was also obtained  for the Benjamin-Ono equation \cite{ILP2} but it does not follow as the KdV case because  of the presence of the Hilbert transform.

Later on,  Kenig et al. \cite{KLPV} extended the results in Theorem \ref{m7}  to the case  when the local regularity  of the initial data $u_{0}$ in (\ref{m9}) is measured  with a fractional indices. The scope  to this case  is   quite more involved, and its  proof is  mainly based in  weighted energy estimates combined  with techniques  involving  pseudo-differential operators  and singular integrals. The property described in Theorem \ref{m7} is intrinsic to suitable solutions of some nonlinear
dispersive models (see also \cite{LPS}). In the context of 2D models, analogous results for the Kadomtsev-Petviashvili II equation \cite{ILP4}
and Zakharov-Kuznetsov \cite{LPZK} equations were proved.

Before state our main result  we will give  an overview of the local well-posedness  of the  IVP \eqref{eq7}.

Following  \cite{KATO1}  we have    that   the initial value problem  IVP (\ref{eq7}) is  \emph{locally well-posed} (LWP) in the Banach space $X$ if  for every initial condition $u_{0}\in X,$ there exists $T>0$  and a unique solution  $u(t)$ satisfying 
\begin{equation}\label{eq39}
u\in C\left([0,T]: X\right)\cap A_{T}
\end{equation}
where $A_{T}$ is an auxiliary  function space. Moreover, the solution  map $u_{0}\longmapsto u,$ is  continuous from $X$ into the class (\ref{eq39}). If $T$ can be taken arbitrarily large, one says that the IVP (\ref{eq7}) is \emph{globally well-posed} (GWP)  in the space $X.$  

It is natural to study the IVP \eqref{eq7} in the Sobolev space 
\begin{equation*}
H^{s}(\mathbb{R})=(1-\partial_{x}^{2})^{-s/2}L^{2}(\mathbb{R}),\qquad s\in \mathbb{R}.
\end{equation*}
  There exist  remarkable differences between the KdV \eqref{eqdgb1} and the IVP \eqref{eq7}. In case of KdV e.g. it posses  infinite conserved quantities, define a Hamiltonian system, have  multi-soliton solutions and is a completely integrable system by the inverse scattering  method \cite{CW}, \cite{FA}. Instead,  in the case of the IVP \eqref{eq7}  there is no integrability, but  three  conserved quantities (see \cite{SS}), specifically
\begin{equation*}
\begin{split}
& \mathrm{I}[u](t)=\int_{\mathbb{R}}u\,\mathrm{d}x,\qquad \mathrm{M}[u](t)=\int_{\mathbb{R}}u^{2}\,\mathrm{d}x,\\
&\mathrm{H}[u](t)=\frac{1}{2}\int_{\mathbb{R}}\left|D_{x}^{\frac{1+\alpha}{2}}u\right|^{2} \mathrm{d}x-\frac{1}{6}\int_{\mathbb{R}}u^{3}\,\mathrm{d}x,
\end{split}
\end{equation*}
are satisfied at least for smooth solutions.

Another property in which these two models  differ,  resides in the fact that  one can obtain a local existence  theory for the KdV equation in $H^{s}(\mathbb{R}),$  based  on the contraction principle. On the contrary,  this cannot be done in the case of the (IVP) \eqref{eq7}. This is a consequence of the  fact that  dispersion is not enough to deal with the nonlinear term. In this direction, Molinet, Saut and  Tzvetkov \cite{MST}  showed that for $0\leq\alpha<1$ the IVP (\ref{eq7}) with  the assumption  $u_{0}\in H^{s}(\mathbb{R})$ is not enough to prove  local well-posedness by using fixed point arguments or Picard iteration method.

Nevertheless, Molinet and Ribaud \cite{MR} proved  global well-posedness by considering initial data in a weighted low frequencies Sobolev space. Later,  using suitable spaces of Bourgain type, Herr \cite{Herr} proves local well-posedness for initial data in $H^{s}(\mathbb{R})\cap \dot{H}^{-\omega}(\mathbb{R})$ for  any $s>-\frac{3\alpha}{4},\, \omega=\frac{1}{\alpha+1}-\frac{1}{2},$ where $ \dot{H}^{-\omega}(\mathbb{R})$ is a a weighted low frequencies Sobolev space (for more details see \cite{Herr}), next by using  a conservation law, these results are extended to global well-posedness in $H^{s}(\mathbb{R})\cap \dot{H}^{-\omega}(\mathbb{R}),$ for $s\geq0,\,\omega =\frac{1}{\alpha+1}-\frac{1}{2}.$ In this sense, an improvement was obtained by  Herr, Ionescu, Kenig and Koch \cite{HIKK}, who 
show that the IVP \eqref{eq7} is globally  well-posed  in the space of the real-valued  $L^{2}(\mathbb{R})-$functions, by using   a renormalization method to control the strong low-high frequency interactions. However, it is not clear  that these results described  above   can be used to  establish our main result. So that,  a local theory  obtained by using energy estimates in addition to  dispersive properties of the smooth  solutions is required.

In the first step,  we obtain the following a priori estimate for  solutions of IVP \eqref{eq7}
\begin{equation*}
\|u\|_{L^{\infty}_{T}H_{x}^{s}}\lesssim \|u_{0}\|_{H_{x}^{s}}\ex^{c\|\partial_{x}u\|_{L_{T}^{1}L^{\infty}_{x}}},
\end{equation*}
part of this estimate is based on the Kato-Ponce   commutator estimate \cite{KATOP2}.

The inequality above reads as follows:  in order to    the solution $u$  abide  in the Sobolev space $H^{s}(\mathbb{R}),$ continuously in time,  we   require  to control the term  $\footnotesize{\|\partial_{x}u\|_{L_{T}^{1}L_{x}^{\infty}}.}$

 First,  we use  Kenig, Ponce and Vega in \cite{KPV4} results concerning oscillatory integrals, in order  to  obtain the classical Strichartz estimates  associated to the group $\footnotesize{S(t)=\ex^{tD_{x}^{\alpha+1}\partial_{x}},}$  corresponding  to the  linear part of the equation in (\ref{eq7}).  

 In second place, the technique introduced by  Koch and Tzvetkov \cite{Koch-Tzvetkov} related to   refined  Strichartz estimate are fundamentals in our analysis. Specifically, their   method is mainly  based in a decomposition  of  the time interval  in small pieces  whose length depends on the spatial frequencies of the solution. This approach   allowed to  Koch and Tzvetkov  to prove local well-posedness, for the Benjamin-Ono equation  in $H^{5/4^{+}}(\mathbb{R}).$ Succeeding, Kenig and Koenig \cite{KK}  enhanced  this estimate,  which led to prove local well-posedness for the Benjamin-Ono equation   in $H^{9/8^{+}}(\mathbb{R}).$  

Several issues arise when handling the nonlinear part of the equation in \eqref{eq7}, nevertheless,  following the work of Kenig, Ponce and Vega \cite{KPV1}, we manage the loss of derivatives  by  means of combination of the local smoothing effect and  a maximal  function estimate of  the group $\footnotesize{S(t)=\ex^{tD_{x}^{\alpha+1}\partial_{x}}}.$

These observations  lead us to present our first result.
\begin{thmx}\label{B.1}
	Let $0<\alpha<1.$  Set $s(\alpha)=\frac{9}{8}-\frac{3\alpha}{8}$ and assume that $s>s(\alpha).$  Then, for any $u_{0}\in H^{s}(\mathbb{R}),$ there exists a positive time  $T=T\left(\|u_{0}\|_{H^{s}(\mathbb{R})}\right)>0$ and a unique solution  $u$ satisfying  
	\eqref{eq7} such that 
	\begin{equation}\label{eq1.16}
	u\in C\left([0,T]: H^{s}(\mathbb{R})\right)\quad \mbox{and}\quad  \partial_{x}u\in L^{1}\left([0,T]: L^{\infty}(\mathbb{R})\right).	
	\end{equation}
	Moreover, for any $r>0,$ the map  $u_{0}\mapsto u(t)$ is continuous from the ball\\ $\left\{u_{0}\in H^{s}(\mathbb{R}):\, \|u_{0}\|_{H^{s}(\mathbb{R})}<r \right\}$ \quad to  $C\left([0,T]: H^{s}(\mathbb{R})\right).$
\end{thmx}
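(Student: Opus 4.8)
The plan is to run the energy method with a regularization, since Molinet--Saut--Tzvetkov have shown that no contraction/Picard scheme can work for \eqref{eq7} when $0\le\alpha<1$. First I would construct smooth solutions, either by parabolic regularization (adding $-\mu\partial_x^2 u$ and letting $\mu\downarrow 0$) or by solving the equation with mollified, spectrally truncated initial data, so that every manipulation below is justified on genuinely smooth solutions and then transferred to the limit. The whole argument then hinges on a priori estimates that are uniform in the regularization parameter, together with a time of existence depending only on $\|u_0\|_{H^s}$.

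The first a priori estimate is the energy inequality already recorded in the excerpt,
\[
\|u\|_{L^\infty_T H^s_x}\lesssim \|u_0\|_{H^s_x}\,\ex^{c\|\partial_x u\|_{L^1_T L^\infty_x}}.
\]
I would obtain it by applying $J^s=(1-\partial_x^2)^{s/2}$ to the equation and pairing with $J^s u$ in $L^2_x$. The dispersive term $D_x^{\alpha+1}\partial_x$ has purely imaginary symbol $i\xi|\xi|^{\alpha+1}$, hence is skew-adjoint and commutes with $J^s$, so it contributes nothing; the nonlinear term splits, via $J^s(u\partial_x u)=uJ^s\partial_x u+[J^s,u]\partial_x u$, into a transport piece that after one integration by parts is controlled by $\|\partial_x u\|_{L^\infty_x}\|u\|_{H^s}^2$, and a commutator piece bounded through the Kato--Ponce estimate again by $\|\partial_x u\|_{L^\infty_x}\|u\|_{H^s}^2$. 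Gronwall's inequality then yields the display.

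The crux is to control $\|\partial_x u\|_{L^1_T L^\infty_x}$ below the Sobolev-embedding threshold $s>3/2$, all the way down to $s>s(\alpha)=\tfrac98-\tfrac38\alpha$. Sobolev embedding alone is useless here, so I would recover the missing derivatives from the dispersion. Decompose $\partial_x u=\sum_N \partial_x P_N u$ into Littlewood--Paley pieces of frequency $N$. On each piece I would use a refined Strichartz estimate in the spirit of Koch--Tzvetkov: partition $[0,T]$ into subintervals of length $\sim N^{-(1+\alpha)}$ adapted to the dispersion relation $\omega(\xi)=\xi|\xi|^{\alpha+1}$ (whose phase satisfies $|\phi''(\xi)|\sim|\xi|^\alpha$), apply on each subinterval the linear Strichartz estimate derived from the Kenig--Ponce--Vega oscillatory-integral bounds, and treat the nonlinear forcing through the Duhamel formula for $S(t)=\ex^{tD_x^{\alpha+1}\partial_x}$. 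The derivative loss created by differentiating the quadratic nonlinearity would be absorbed by combining the local smoothing effect with the maximal-function estimate, following Kenig--Ponce--Vega. Summing over dyadic $N$ converges precisely when $s>\tfrac98-\tfrac38\alpha$, producing a bound of the shape $\|\partial_x u\|_{L^1_T L^\infty_x}\lesssim T^{\theta}\bigl(\|u\|_{L^\infty_T H^s_x}+\|u\|_{L^\infty_T H^s_x}^2\bigr)$ for some $\theta>0$. Feeding this back into the energy inequality and running a continuity (bootstrap) argument then yields a time $T=T(\|u_0\|_{H^s})>0$ on which the uniform bounds hold.

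With the uniform bounds in hand, existence follows by passing to the limit in the regularization, and I would invoke the Bona--Smith approximation to upgrade weak convergence to $u\in C([0,T]:H^s)$ together with $\partial_x u\in L^1_T L^\infty_x$. Uniqueness I would establish at the $L^2$ level: for two solutions the difference $w$ solves a linear equation whose coefficients are controlled by the already-available $\|\partial_x u\|_{L^1_T L^\infty_x}$, so a Gronwall estimate forces $w\equiv 0$. Continuous dependence, the most delicate point for a quasilinear model of this type, I would obtain again by the Bona--Smith method, combining the uniform-in-data estimates with the $L^2$ stability to pass continuity of the smooth flow to the rough one. The main obstacle throughout is the nonlocal operator $D_x^{\alpha+1}$, which is more singular than the Hilbert transform appearing for Benjamin--Ono: it complicates both the commutator bounds and the frequency-by-frequency Strichartz analysis, and it is precisely the balance between the refined Strichartz gain and this derivative loss that pins down the threshold $s(\alpha)=\tfrac98-\tfrac38\alpha$.
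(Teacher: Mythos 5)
Your overall architecture coincides with the paper's: regularize to work with smooth solutions, prove the energy inequality $\|u\|_{L^\infty_T H^s_x}\lesssim \|u_0\|_{H^s_x}\ex^{c\|\partial_x u\|_{L^1_T L^\infty_x}}$ via the Kato--Ponce commutator estimate, close the loop by controlling $\|\partial_x u\|_{L^1_T L^\infty_x}$ through a frequency-localized Strichartz estimate on short time intervals combined with the local smoothing effect and the maximal function estimate of Kenig--Ponce--Vega, then run a continuity argument and finish with Bona--Smith. (The paper additionally rescales to small data so that the bootstrap can be phrased as an implicit-function-theorem argument on $[0,1]$, but that is cosmetic.)

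The one concrete gap is your choice of time slicing. You partition $[0,T]$ into subintervals of length $\sim N^{-(1+\alpha)}$, i.e.\ the full dispersive scale of the phase $\xi|\xi|^{\alpha+1}$. Running the computation of Proposition \ref{eq50} with interval length $\sim 2^{-k\sigma}$ and $\sim 2^{k\sigma}$ intervals, the homogeneous Duhamel term costs $\|D_x^{1-\frac{\alpha}{q}+\frac{\sigma}{q}}u_k\|_{L^\infty_T L^2_x}$ with $q\to 4$ as $p\to\infty$; your choice $\sigma=1+\alpha$ gives the exponent $1-\frac{\alpha}{4}+\frac{1+\alpha}{4}=\frac54$, which is the Koch--Tzvetkov threshold and is independent of $\alpha$ --- it does not sum down to $s>\frac98-\frac{3\alpha}{8}$ as you assert. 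The paper instead keeps $\sigma$ free, obtaining a regularity cost of $1-\frac{\alpha}{4}+\frac{\sigma}{4}$ on $u$ and $1-\frac{\alpha}{4}-\frac{3\sigma}{4}$ on $F=-u\partial_x u$, and then optimizes: since the forcing carries one extra derivative while the local smoothing effect \eqref{eq101} recovers only $\frac{1+\alpha}{2}$ of it, one needs the gap between the two exponents to equal $\frac{1-\alpha}{2}$, which forces $\sigma=\frac{1-\alpha}{2}$ (longer intervals, of length $\sim 2^{-k(1-\alpha)/2}$) and yields exactly $s(\alpha)=1-\frac{\alpha}{4}+\frac{1-\alpha}{8}=\frac98-\frac{3\alpha}{8}$. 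So the Kenig--Koenig-type lengthening of the subintervals, with the smoothing effect absorbing the resulting loss on the inhomogeneous term, is not an optional refinement here: it is the step that produces the stated threshold, and with the slicing you wrote down the scheme closes only for $s>\frac54$.
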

 Theorem \ref{B.1} is the base result  to  describe the propagation of regularity phenomena.  
 As we mentioned above   the propagation of regularity phenomena is satisfied by the  BO and KdV equations respectively.  These two models correspond to particular cases of the IVP \eqref{eq7}, specifically  by taking  $\alpha=0$ and $\alpha=1.$ 
 
 A  question  that arises naturally  is to determine whether the propagation of regularity phenomena is  satisfied  for a model with   an intermediate dispersion  between these two  models mentioned above. 
 
 Our main result  give answer to this problem and it  is  summarized in the following:
\begin{thmx}\label{A}
	Let $u_{0}\in H^{s}(\mathbb{R})$ with $s=\frac{3-\alpha}{2},$\, 
	and $u=u(x,t),$  be the corresponding solution  of the IVP (\ref{eq7}) provided by Theorem \ref{B.1}. 
	
	If  for some $x_{0}\in \mathbb{R}$  and for some $m\in\mathbb{Z}^{+},\,m\geq  2,$
	\begin{equation}\label{eq106}
	\partial_{x}^{m}u_{0}\in L^{2}\left(\left\{x\geq x_{0}\right\}\right),
	\end{equation}
	then for any $v\geq0,T>0,\epsilon>0$ and    \,$\tau>\epsilon$
	\begin{equation}\label{eqc1}
	\begin{split}
	&\sup_{0\leq t\leq T}\int_{x_{0}+\epsilon-vt}^{\infty}(\partial_{x}^{j}u)^{2}(x,t)\mathrm{d}x
	+ \int_{0}^{T}\int_{x_{0}+\epsilon-vt}^{x_{0}+\tau-vt}\left(D_{x}^{\frac{\alpha+1}{2}}\partial_{x}^{j}u\right)^{2}(x,t)\mathrm{d}x\,\mathrm{d}t\\
	&		+\int_{0}^{T}\int_{x_{0}+\epsilon-vt}^{x_{0}+\tau-vt}\left(D_{x}^{\frac{\alpha+1}{2}}\mathcal{H}\partial_{x}^{j}u\right)^{2}(x,t)\mathrm{d}x\,\mathrm{d}t\leq c 
	\end{split}
	\end{equation}	
	for $j=1,2,\dots,m$ with $c=c\left(T;\epsilon;v;\alpha;\|u_{0}\|_{H^{s}};\left\|\partial_{x}^{m}u_{0}\right\|_{L^{2}((x_{0},\infty))}\right)>0.$
	
	If in addition to (\ref{eq106}) there exists $x_{0}\in \mathbb{R}^{+}$ 
	\begin{equation}\label{clave1}
	D_{x}^{\frac{1-\alpha}{2}}\partial_{x}^{m}u_{0}\in L^{2}\left(\left\{x\geq x_{0}\right\}\right)
	\end{equation}
	then for any $v\geq 0,\,\epsilon>0$ and $\tau>\epsilon$
	\begin{equation}\label{l1}
	\begin{split}
	&\sup_{0\leq t\leq  T}\int_{x_{0}+\epsilon-vt}^{\infty}\left(D_{x}^{\frac{1-\alpha}{2}}\partial_{x}^{m}u\right)^{2}(x,t)\,\mathrm{d}x
	+\int_{0}^{T}\int_{x_{0}+\epsilon-vt}^{x_{0}+\tau-vt}\left(\partial_{x}^{m+1}u\right)^{2}(x,t)\,\mathrm{d}x\,\mathrm{d}t\\
	&+\int_{0}^{T}\int_{x_{0}+\epsilon-vt}^{x_{0}+\tau-vt}\left(\partial_{x}^{m+1}\mathcal{H}u\right)^{2}(x,t)\,\mathrm{d}x\,\mathrm{d}t\leq c
	\end{split}
	\end{equation}
	with $c=c\left(T;\epsilon;v;\alpha;\|u_{0}\|_{H^{s}};\left\|D_{x}^{\frac{1-\alpha}{2}}\partial_{x}^{m}u_{0}\right\|_{L^{2}((x_{0},\infty))}\right)>0.$
\end{thmx}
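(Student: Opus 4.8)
\emph{Strategy.} The plan is to derive both \eqref{eqc1} and \eqref{l1} as a priori weighted energy estimates for smooth solutions and then to transfer them to the solution of Theorem~\ref{B.1} by approximating $u_{0}$ by smooth data and passing to the limit. I would fix a family of smooth non-decreasing weights $\chi=\chi_{\epsilon,\tau}$ with $\chi\equiv0$ on $(-\infty,\epsilon]$ and $\chi\equiv1$ on $[\tau,\infty)$, and use the travelling versions $\chi(x-x_{0}+vt)$, so that $\partial_{t}\chi=v\chi'\ge0$ and $\chi'$ is supported exactly in the moving window $(x_{0}+\epsilon-vt,x_{0}+\tau-vt)$ of the statement. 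As in Isaza--Linares--Ponce the argument is an induction on the order of differentiation $j=1,\dots,m$, driven by a nested family of such weights in which the transition region used at order $j$ sits inside the region where the order-$(j-1)$ local smoothing is already available.

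\emph{The energy identity.} Writing $u_{j}=\partial_{x}^{j}u$ and differentiating \eqref{eq7}, the Leibniz rule gives $\partial_{x}^{j}(u\partial_{x}u)=u\,\partial_{x}u_{j}+(j+1)(\partial_{x}u)\,u_{j}+R_{j}$, where $R_{j}$ collects the intermediate products $\partial_{x}^{k}u\,\partial_{x}^{\,j+1-k}u$ with $2\le k\le j-1$. Multiplying the equation for $u_{j}$ by $2\chi u_{j}$ and integrating yields
\begin{equation*}
\frac{d}{dt}\int\chi u_{j}^{2}\,\mathrm{d}x=v\int\chi' u_{j}^{2}\,\mathrm{d}x+2\int\chi u_{j}\,D_{x}^{\alpha+1}\partial_{x}u_{j}\,\mathrm{d}x-2\int\chi u_{j}\,\partial_{x}^{j}(u\partial_{x}u)\,\mathrm{d}x.
\end{equation*}
Setting $\Lambda=D_{x}^{\frac{\alpha+1}{2}}$, so that $D_{x}^{\alpha+1}\partial_{x}=\Lambda^{2}\partial_{x}$ with $\Lambda$ self-adjoint and commuting with $\partial_{x}$, the dispersive term symmetrizes as
\begin{equation*}
2\int\chi u_{j}\,\Lambda^{2}\partial_{x}u_{j}\,\mathrm{d}x=-\int\chi'(\Lambda u_{j})^{2}\,\mathrm{d}x+2\int[\Lambda,\chi]u_{j}\,\Lambda\partial_{x}u_{j}\,\mathrm{d}x.
\end{equation*}
Integrating in time and moving the first term to the left produces the positive quantity $\int_{0}^{T}\!\!\int\chi'\big(D_{x}^{\frac{\alpha+1}{2}}u_{j}\big)^{2}$, which is exactly the first smoothing integral in \eqref{eqc1}.

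\emph{The commutator and the nonlinearity.} The delicate point is the commutator $\int[\Lambda,\chi]u_{j}\,\Lambda\partial_{x}u_{j}$: the factor $\Lambda\partial_{x}u_{j}$ carries order $\frac{\alpha+3}{2}$, strictly more than the order $\frac{\alpha+1}{2}$ controlled by the smoothing. I would resolve this by a pseudodifferential expansion of $[\Lambda,\chi]$, whose leading contribution is $\tfrac{\alpha+1}{2}\chi'\mathcal{H}D_{x}^{\frac{\alpha-1}{2}}$; after one integration by parts this redistributes a derivative so that the term is dominated by $\int\chi'\big(D_{x}^{\frac{\alpha+1}{2}}u_{j}\big)\big(D_{x}^{\frac{\alpha+1}{2}}\mathcal{H}u_{j}\big)$ plus strictly lower-order remainders. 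This is precisely why the Hilbert-transformed smoothing term $\int_{0}^{T}\!\!\int\chi'\big(D_{x}^{\frac{\alpha+1}{2}}\mathcal{H}u_{j}\big)^{2}$ of \eqref{eqc1} must be carried along: together with the first smoothing integral it bounds the commutator by Cauchy--Schwarz. For the nonlinearity, the top-order piece gives $2\int\chi u_{j}\,u\,\partial_{x}u_{j}=-\int(\chi' u+\chi\,\partial_{x}u)u_{j}^{2}$, whose first part is absorbed by the smoothing and whose second part, like the term $(\partial_{x}u)u_{j}$, is controlled by $\|\partial_{x}u(t)\|_{L^{\infty}_{x}}$, integrable in time by Theorem~\ref{B.1}, so that Gronwall closes the estimate; the intermediate products $R_{j}$ and the term $v\int\chi' u_{j}^{2}$ are handled by the induction hypothesis combined with localized interpolation (Gagliardo--Nirenberg), splitting according to whether $\chi'>0$.

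\emph{The fractional endpoint and the main obstacle.} For \eqref{l1} I would run the same scheme on $D_{x}^{\frac{1-\alpha}{2}}\partial_{x}^{m}u$, i.e.\ apply $\Psi=D_{x}^{\frac{1-\alpha}{2}}$ to the equation for $u_{m}$. The smoothing gains then compose exactly to the top order: $D_{x}^{\frac{\alpha+1}{2}}D_{x}^{\frac{1-\alpha}{2}}\partial_{x}^{m}=D_{x}\partial_{x}^{m}=\mathcal{H}\partial_{x}^{m+1}$ and its companion $D_{x}^{\frac{\alpha+1}{2}}\mathcal{H}D_{x}^{\frac{1-\alpha}{2}}\partial_{x}^{m}=-\partial_{x}^{m+1}$, which produce the two integrals $\int_{0}^{T}\!\!\int(\partial_{x}^{m+1}u)^{2}$ and $\int_{0}^{T}\!\!\int(\partial_{x}^{m+1}\mathcal{H}u)^{2}$ appearing in \eqref{l1}. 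I expect the main obstacle to be exactly the commutator bookkeeping at this fractional step: one must expand both $[\Psi,\chi]$ and $[\Lambda,\chi]$ simultaneously and combine them with the fractional Leibniz rule for $\Psi(u\partial_{x}u)$ (Kato--Ponce), keeping every remainder strictly below the order that the composed smoothing controls, so that all error terms can be absorbed or fed back into the induction. Making these expansions quantitative enough to close, while tracking the $\mathcal{H}$-terms that the nonlocal symbol $i\xi|\xi|^{\alpha+1}$ forces into every step, is where the real work lies.
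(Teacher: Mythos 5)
Your architecture coincides with the paper's: weighted energy estimates with travelling cutoffs, a two-step induction (integer order $j$, then the extra $\tfrac{1-\alpha}{2}$ derivative), the composition $D_{x}^{\frac{\alpha+1}{2}}D_{x}^{\frac{1-\alpha}{2}}\partial_{x}^{m}=\mathcal{H}\partial_{x}^{m+1}$ at the fractional endpoint, Gronwall via $\|\partial_{x}u\|_{L^{1}_{T}L^{\infty}_{x}}$, and a regularization/limiting argument. The symmetrization identity you write for the dispersive term is correct. However, there is a genuine gap exactly at the point you flag as ``where the real work lies,'' and it is not a detail: it is the paper's main new ingredient. You propose to expand $[\Lambda,\chi]$, $\Lambda=D_{x}^{\frac{\alpha+1}{2}}$, to one term $\tfrac{\alpha+1}{2}\chi'\mathcal{H}D_{x}^{\frac{\alpha-1}{2}}$ ``plus strictly lower-order remainders.'' The remainder of a one-term symbol expansion is an operator of fixed order $\tfrac{\alpha+1}{2}-2$, and in the energy identity it is tested against two copies of $u_{j}=\partial_{x}^{j}u$, i.e.\ against roughly $2j+\alpha$ derivatives of $u$; for $j\geq 2$ this is far beyond what $H^{\frac{3-\alpha}{2}}$ or the induction hypothesis controls, so ``lower-order'' does not make the remainder harmless. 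The paper resolves this with the Ginibre--Velo expansion \eqref{eq8}--\eqref{eq105}: an \emph{exact} operator identity $\left[\mathcal{H}D_{x}^{a};\chi_{\epsilon,b}^{2}\right]=-\tfrac{1}{2}P_{n}(a)+\tfrac{1}{2}\mathcal{H}P_{n}(a)\mathcal{H}-R_{n}(a)$ in which the depth $n$ is chosen to grow with $j$ (via $2n+1\leq a+2\sigma\leq 2n+3$, $\sigma\sim j$), precisely so that Proposition \ref{propo2} makes $D^{\sigma}R_{n}(a)D^{\sigma}$ bounded on $L^{2}$ and the remainder can be paid for with $\|u_{0}\|_{L^{2}}$ alone, while the local terms $D^{\mu-l}\left(\chi^{(2l+1)}D^{\mu-l}\right)$ of $P_{n}$ are exactly the quantities controlled at earlier induction steps. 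Without some substitute for this mechanism your induction does not close.

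A second, related problem is the proposed absorption of the commutator by Cauchy--Schwarz against the two smoothing integrals. If the leading contribution really were a cross term $c\int\chi'\bigl(D_{x}^{\frac{\alpha+1}{2}}u_{j}\bigr)\bigl(D_{x}^{\frac{\alpha+1}{2}}\mathcal{H}u_{j}\bigr)$ with an uncomputed constant $c$, Cauchy--Schwarz would only control it if $|c|$ were strictly smaller than twice the coefficient of the smoothing terms, which you have not established; an unfavorable constant would destroy the positivity that the whole argument rests on. In fact the exact identity (Lemma \ref{prop1}) shows there is no cross term to absorb: the leading contributions are the two perfect squares $\bigl(D_{x}^{\frac{\alpha+1}{2}}u_{j}\bigr)^{2}$ and $\bigl(D_{x}^{\frac{\alpha+1}{2}}\mathcal{H}u_{j}\bigr)^{2}$ against $(\chi_{\epsilon,b}^{2})'$ with the explicit coefficient $\tfrac{\alpha+2}{4}$, plus odd-order derivative terms of $\chi_{\epsilon,b}^{2}$ that are handled by the previous induction step, plus the $L^{2}$-bounded remainder. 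Your proposal also leaves the nonlinear term at the fractional step underspecified (the paper needs the partition $\chi_{\epsilon,b}+\phi_{\epsilon,b}+\psi_{\epsilon}=1$, the separated-support Lemma \ref{lemma1}, and D.\ Li's refined Leibniz rules to keep every piece at or below the locally available regularity), but that part is at least plausibly completable along the lines you sketch; the commutator expansion is the step that, as written, would fail.
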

Although  the  argument of the proof of Theorem \ref{A} follows in spirit that of  KdV i.e. an induction process combined with  weighted energy estimates. The   presence of  the  non-local operator $\footnotesize{D_{x}^{\alpha+1}\partial_{x}}$, in the term  providing  the dispersion,  makes the proof  much harder. More precisely, two  difficulties appear, in the first place  and the most important is to  obtain explicitly  the Kato smoothing effect as in \cite{KATO1}, that as  in the proof of Theorem \ref{m7} is fundamental.

 In contrast to KdV equation,  the gain of the local smoothing  in    solutions  of the    dispersive generalized  Benjamin-Ono  equation is just $\footnotesize{\frac{\alpha+1}{2}}$ derivatives, so as occurs in the case of the Benjamin-Ono equation \cite{ILP2},  the iterative  argument in the induction process  is   carried out in two steps, one for  positive integers $m$ and  another one for $m+\frac{1-\alpha}{2}$ derivative. 

In the case of the BO equation \cite{ILP2}, the authors obtain the smoothing effect  basing  their analysis  on  several commutator estimates,  such as  the extension of  the first  Calderon's commutator for the Hilbert transform \cite{BC}.  However, their method of proof do not allow them to  obtain  explicitly the local smoothing as in \cite{KATO1}. 

   The advantage of our method is that it allows obtain explicitly the smoothing effect  for any $\alpha\in(0,1)$ in the IVP \eqref{eq7}. Roughly,  we rewrite the term modeling the dispersive part of the equation in \eqref{eq7}, in terms of an expression involving $\footnotesize{\left[\mathcal{H}
   	D_{x}^{\alpha+2}; \chi_{\epsilon, b}^{2}\right]}$. At this point, we  incorporate   Ginibre and Velo  \cite{GV2}   results about commutator decomposition. This, allows us to   obtain explicitly   the smoothing effect as in \cite{KATO1}, at every step of the induction process in the energy estimate. Besides, this approach   allow us  to study  the propagation of regularity phenomena in models where the dispersion is lower in comparison  with  that  of  IVP \eqref{eq7}. We address this issue in a forthcoming work, specifically we study the propagation of regularity phenomena in real solutions of the model 
 \begin{equation*}
 \partial_{t}u-D_{x}^{\alpha}\partial_{x}u+u\partial_{x}u=0,\quad x,t\in \mathbb{R},\quad 0<\alpha<1.
 \end{equation*}
As a direct consequence  of the Theorem \ref{A} 
one has that for an appropriate  class of initial data, the singularity of the solution  travels with infinity speed to the left as time evolves. Also, the time reversibility  property implies that the solution  cannot have  had  some regularity  in the past.

Concerning the nonlinear part of IVP \eqref{eq7}  into the weighted energy estimate, several issues arises. Nevertheless,  following  Kenig et al. \cite{KLPV}  approach, combined with the works of   Kato-Ponce \cite{KATOP2}, and the recent work D. Li \cite{Dongli} on the generalization of several commutators estimate, allow us to overcome  these difficulties. 
\begin{rem}
	\begin{itemize}
	\item[(I)]  It will be clear from our proof  that the requirement on the initial data, that is   $u_{0}\in H^{\frac{3-\alpha}{2}}(\mathbb{R})$ in  Theorem \ref{A} can be lowered  to $H^{\frac{9-3\alpha}{8}+}(\mathbb{R}).$
	\item[(II)] Also it is worth highlighting that  the proof of Theorem \ref{A} can be extended to  solutions of the the IVP
		\begin{equation}\label{eq7.1}
		\left\{
		\begin{array}{ll}
		\partial_{t}u-D_{x}^{\alpha+1}\partial_{x}u+u^{k}\partial_{x}u=0, & x,t\in\mathbb{R},\,0<\alpha<1,\, k\in \mathbb{Z}^{+}, \\
		u(x,0)=u_{0}(x).&  \\
		\end{array} 
		\right.
		\end{equation}
	\item[(III)] The results in Theorem \ref{A}  still holds for solutions  of the defocussing  generalized   dispersive Benjamin-Ono equation 
	\begin{equation*}\label{eq7.2}
\left\{
\begin{array}{ll}
\partial_{t}u-D_{x}^{\alpha+1}\partial_{x}u-u\partial_{x}u=0, & x,t\in\mathbb{R},\,0<\alpha<1, \\
u(x,0)=u_{0}(x).&  \\
\end{array} 
\right.
\end{equation*}
This can be seen applying Theorem \ref{A} to the function $v(x,t)=u(-x,-t),$ where $u(x,t)$ is a solution  of \eqref{eq7}. In short, Theorem \ref{A} remains  valid, backward in time  for initial data $u_{0},$ satisfying \eqref{eq106} and \eqref{clave1}.
	\end{itemize}
		\end{rem}
Next, we present some immediate consequences of Theorem \ref{A}.
\begin{cor}
	Let $u\in C\left([-T,T]:H^{\frac{3-\alpha}{2}}(\mathbb{R})\right)$  be a solution  of the equation in (\ref{eq7}) described by Theorem \ref{A}. If there exist  $n,m\in \mathbb{Z}^{+}$  with $m\leq n$ such that for some $\tau_{1},\tau_{2}\in \mathbb{R}$ with $\tau_{1}<\tau_{2}$
	\begin{equation*}
	\int_{\tau_{2}}^{\infty} |\partial_{x}^{n}u_{0}(x)|^{2}\,\mathrm{d}x<\infty\quad \mbox{but}\quad \partial_{x}^{m}u_{0}\notin L^{2}((\tau_{1},\infty)),
	\end{equation*} 
	then for any $t\in (0,T)$ and any $v>0$ and $\epsilon>0$
	\begin{equation*}
	\int_{\tau_{2}+\epsilon-vt}^{\infty}|\partial_{x}^{n}u(x,t)|^{2}\mathrm{d}x<\infty,
	\end{equation*}
	and for any $t\in (-T,0)$ and any $\tau_{3}\in \mathbb{R}$
	\begin{equation*}
	\int_{\tau_{3}}^{\infty}|\partial_{x}^{m}u(x,t)|^{2}\mathrm{d}x=\infty.
	\end{equation*}
	\end{cor}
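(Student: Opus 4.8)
The plan is to derive both assertions directly from Theorem \ref{A}: the first by a single application, and the second by a contradiction argument that exploits the time-translation invariance of \eqref{eq7} together with the arbitrariness of the propagation speed $v$. Before anything else I would record an elementary but decisive observation. Since $s=\frac{3-\alpha}{2}>1$ for every $\alpha\in(0,1)$, the hypothesis $u_{0}\in H^{s}(\mathbb{R})$ already forces $\partial_{x}u_{0}\in L^{2}(\mathbb{R})$, and in particular $\partial_{x}u_{0}\in L^{2}((\tau_{1},\infty))$. Hence the assumption $\partial_{x}^{m}u_{0}\notin L^{2}((\tau_{1},\infty))$ can only hold if $m\geq 2$, whence $n\geq m\geq 2$ as well; this places us exactly within the hypotheses of Theorem \ref{A}.

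For the first conclusion I would apply Theorem \ref{A} with $x_{0}=\tau_{2}$ and with $n$ playing the role of $m$. The assumption $\int_{\tau_{2}}^{\infty}|\partial_{x}^{n}u_{0}|^{2}\,\mathrm{d}x<\infty$ is precisely \eqref{eq106}, so the supremum term in \eqref{eqc1}, specialized to $j=n$, yields
\begin{equation*}
\sup_{0\leq t\leq T}\int_{\tau_{2}+\epsilon-vt}^{\infty}(\partial_{x}^{n}u)^{2}(x,t)\,\mathrm{d}x<\infty
\end{equation*}
for every $v>0$ and $\epsilon>0$, which is the asserted bound for each fixed $t\in(0,T)$.

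The second conclusion is the contrapositive of forward propagation. I would fix $t_{0}\in(-T,0)$ and $\tau_{3}\in\mathbb{R}$ and suppose, for contradiction, that $\int_{\tau_{3}}^{\infty}|\partial_{x}^{m}u(x,t_{0})|^{2}\,\mathrm{d}x<\infty$. Because \eqref{eq7} is autonomous, the shifted function $\tilde{u}(x,s):=u(x,t_{0}+s)$ is again a solution in the sense of Theorem \ref{B.1}, with datum $\tilde{u}(\cdot,0)=u(\cdot,t_{0})\in H^{s}(\mathbb{R})$ (guaranteed by $u\in C([-T,T]:H^{s})$) satisfying $\partial_{x}^{m}\tilde{u}(\cdot,0)\in L^{2}(\{x\geq\tau_{3}\})$. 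Applying Theorem \ref{A} to $\tilde{u}$ on the interval $[0,|t_{0}|]$ with $|t_{0}|<T$ and $x_{0}=\tau_{3}$, then evaluating the resulting estimate at $s=|t_{0}|$, i.e. at the original time $0$ where $\tilde{u}(\cdot,|t_{0}|)=u_{0}$, gives
\begin{equation*}
\int_{\tau_{3}+\epsilon-v|t_{0}|}^{\infty}|\partial_{x}^{m}u_{0}(x)|^{2}\,\mathrm{d}x<\infty
\end{equation*}
for every $v>0$ and $\epsilon>0$. Here lies the mechanism of the proof: since $v$ is arbitrary and $|t_{0}|>0$, I would choose $v$ so large that $\tau_{3}+\epsilon-v|t_{0}|<\tau_{1}$, which produces $\partial_{x}^{m}u_{0}\in L^{2}((\tau_{1},\infty))$ and contradicts the hypothesis. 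As $t_{0}$ and $\tau_{3}$ were arbitrary, the integral is infinite for all $t\in(-T,0)$ and all $\tau_{3}$.

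I do not anticipate a genuine obstacle, as the corollary merely unwinds Theorem \ref{A}; the only points requiring care are checking that the shifted datum $u(\cdot,t_{0})$ lies in $H^{s}$ and that the infinite-speed feature is encoded precisely in the freedom to send $v\to\infty$. Equivalently, the backward-in-time statement can be obtained through the reflection $v(x,t)=u(-x,-t)$ noted in Remark (III), but the autonomous time-shift above is the most economical route.
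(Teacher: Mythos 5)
Your proof is correct and follows exactly the intended argument: the paper states this corollary without proof as an immediate consequence of Theorem \ref{A}, and the standard derivation is precisely your two steps (direct application with $x_{0}=\tau_{2}$ for the forward statement, and the time-shift plus ``send $v\to\infty$'' contradiction for the backward one). Your preliminary observation that $m\geq 2$ is forced by $u_{0}\in H^{\frac{3-\alpha}{2}}(\mathbb{R})$ is a worthwhile check that the hypotheses of Theorem \ref{A} are actually met.
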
 

		The rest    of the paper  is organized  as follows:   in the section 2 we fix the notation to be used throughout the document. Section 3 contains a brief summary of  commutators estimates   involving fractional  derivatives. The section 4 deals with the   local well-posedness. Finally, the section 5 is   devoted   to the proof of Theorem \ref{A} .  
	\section{Notation}
	 The following notation  will be used extensively  throughout this article. The operator  $J^{s}=(1-\partial_{x}^{2})^{s/2}$ denotes the   Bessel potentials of order $-s.$  
	 
	 For $1\leq p\leq \infty,$\, $L^{p}(\mathbb{R})$  is the usual  Lebesgue space  with the norm  $\|\cdot\|_{L^{p}},$  besides for $s\in \mathbb{R},$  we consider the Sobolev space $H^{s}(\mathbb{R})$  is defined  via its usual norm $\|f\|_{H^{s}}=\|J^{s}f\|_{L^{2}}.$ In this contex,  we define   $${\displaystyle H^{\infty}(\mathbb{R})=\bigcap_{s\geq0} H^{s}(\mathbb{R}).}$$
	 
	 Let $f=f(x,t)$  be a function  defined for $x\in \mathbb{R}$ and $t$ in the time interval $[0,T],$ with $T>0$  or in the hole line $\mathbb{R}$. Then if $A$ denotes any of the spaces defined above, we define  the spaces  $L^{p}_{T}A_{x}$ and $L_{t}^{p}A_{x}$ by the norms 
	 \begin{equation*}
	 \|f\|_{L^{p}_{T}A_{x}}=\left(\int_{0}^{T}\|f(\cdot,t)\|_{A}^{p}\,\mathrm{d}t\right)^{1/p}\quad \mbox{and}\quad \|f\|_{L^{p}_{t}A_{x}}=\left(\int_{\mathbb{R}} \|f(\cdot,t)\|_{A}^{p}\,\mathrm{d}t\right)^{1/p},
	 \end{equation*}
	  for $1\leq p\leq \infty$ with the  natural modification in the case $p=\infty.$
	  Moreover, we use similar definitions for the  mixed spaces $L_{x}^{q}L_{t}^{p}$ and $L_{x}^{q}L_{T}^{p}$  with $1\leq p,q\leq \infty.$
	  
	 For two quantities  $A$ and $B$, we denote  $A\lesssim B$  if $A\leq cB$ for some constant $c>0.$ Similarly, $A\gtrsim B$  if  $A\geq cB$ for some $c>0.$  We denote  $A \sim B$  if $A\lesssim B$ and $B\lesssim A.$  The  dependence of the constant $c$  on other parameters  or constants  are usually clear  from the context  and we will often suppress this dependence whenever possible.  
	 
	 For a real number $a$ we will  denote  by $a^{+}$	 instead of $a+\epsilon$, whenever $\epsilon$ is a positive number whose value is small enough.
	\section{Preliminary}
	In this section, we state  several inequalities  to be  used in the next sections.
	
First,  we have     an extension of the Calderon commutator  theorem \cite{calderon} established by B. Baj\v{s}anski et al. \cite{BC}. 
	\begin{thm}
		For any  $p\in (1,\infty)$ and any  $l,m\in \mathbb{Z}^{+}\cup\{0\}$ there exists  $c=c(p;l;m)>0$ such that 
		\begin{equation}\label{eq31}
		\left\|\partial_{x}^{l}\left[\mathcal{H}; \psi\right]\,\partial_{x}^{m}f\right\|_{L^{p}}\leq c\|\partial_{x}^{m+l}\psi\|_{L^{\infty}}\|f\|_{L^{p}}.
		\end{equation}
		\end{thm}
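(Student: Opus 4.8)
The plan is to first peel the outer derivatives off the commutator, reducing everything to the case $l=0$, and then to obtain the resulting estimate by iterating an exact commutator identity that exposes a first Calderon commutator at each step, whose $L^p$ boundedness is Calderon's theorem \cite{calderon}. Throughout one argues with $f$ Schwartz and extends by density, and $\psi$ is as smooth as needed (in the applications it is a cutoff with all derivatives bounded).

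\emph{Step 1 (removing the outer derivatives).} Since $\mathcal{H}$ commutes with $\partial_x$, and since $\partial_x^l M_\psi\partial_x^m=\sum_{k=0}^{l}\binom{l}{k}M_{\psi^{(k)}}\partial_x^{\,l+m-k}$ by the Leibniz rule (here $M_\psi$ denotes multiplication by $\psi$), a direct computation yields the operator identity
\[
\partial_x^l[\mathcal{H};\psi]\partial_x^m=\sum_{k=0}^{l}\binom{l}{k}[\mathcal{H};\psi^{(k)}]\partial_x^{\,l+m-k}.
\]
Each summand has the form $[\mathcal{H};\phi]\partial_x^{N}$ with $\phi=\psi^{(k)}$ and $N=l+m-k$, and crucially $\phi^{(N)}=\psi^{(l+m)}$. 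Hence \eqref{eq31} follows once one proves the case $l=0$, namely $\bigl\|[\mathcal{H};\phi]\partial_x^{N}f\bigr\|_{L^p}\le c\,\|\phi^{(N)}\|_{L^\infty}\|f\|_{L^p}$ for all $N\ge 0$.

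\emph{Step 2 (the case $l=0$).} I would induct on $N$. For $N=0$ the bound is immediate, since $[\mathcal{H};\phi]=\mathcal{H}M_\phi-M_\phi\mathcal{H}$ and both $\mathcal{H}$ and $M_\phi$ are bounded on $L^p$, so $\|[\mathcal{H};\phi]f\|_{L^p}\le 2\,\|\mathcal{H}\|_{L^p\to L^p}\|\phi\|_{L^\infty}\|f\|_{L^p}$. The engine of the inductive step is the exact identity, obtained by moving one derivative onto the kernel $\tfrac{\phi(x)-\phi(y)}{x-y}$ of $[\mathcal{H};\phi]$,
\[
[\mathcal{H};\phi]\partial_x=\mathcal{H}M_{\phi'}-C_\phi,\qquad C_\phi f(x)=\frac{1}{\pi}\,\mathrm{p.v.}\int\frac{\phi(x)-\phi(y)}{(x-y)^2}f(y)\,\mathrm{d}y.
\]
Here $C_\phi$ is precisely the first Calderon commutator, and Calderon's theorem \cite{calderon} gives $\|C_\phi\|_{L^p\to L^p}\lesssim\|\phi'\|_{L^\infty}$, while $\|\mathcal{H}M_{\phi'}\|_{L^p\to L^p}\lesssim\|\phi'\|_{L^\infty}$ trivially; this settles $N=1$. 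Writing $[\mathcal{H};\phi]\partial_x^{N}=\bigl([\mathcal{H};\phi]\partial_x\bigr)\partial_x^{N-1}$ and feeding in this identity, together with the analogous reductions expressing $C_\phi\partial_x$ and $\mathcal{H}M_{\phi'}\partial_x$ through $\mathcal{H}M_{\phi''}$ and Calderon-type commutators of one higher order, one rewrites $[\mathcal{H};\phi]\partial_x^{N}$ as a finite combination of (i) terms $\mathcal{H}M_{\phi^{(N)}}$, bounded by $\|\phi^{(N)}\|_{L^\infty}$, and (ii) genuinely singular Calderon commutators, controlled by Calderon's theorem, all with constant $\|\phi^{(N)}\|_{L^\infty}$; the lower-order remainders (those in which fewer than $N$ derivatives have reached $\phi$) are absorbed by the inductive hypothesis applied to $\phi'$, since $(\phi')^{(N-1)}=\phi^{(N)}$.

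\emph{Main obstacle.} The delicate point, and the reason one cannot simply differentiate the kernel $\tfrac{\phi(x)-\phi(y)}{x-y}$ in $y$ a full $N$ times, is the \emph{sharp constant}: the diagonal value of $\partial_y^{N}\tfrac{\phi(x)-\phi(y)}{x-y}$ is a nonzero multiple of $\phi^{(N+1)}(x)$, so carrying all derivatives onto the kernel would force a bound in terms of $\|\phi^{(N+1)}\|_{L^\infty}$, one derivative too many. One must instead retain the principal-value structure and let the surplus derivative be absorbed into the Calderon commutators, whose $L^p$ boundedness with the correct constant is the genuine analytic input. A second subtlety is that the individual over-singular pieces produced by integration by parts are not themselves $L^p$-bounded and are meaningful only in combination; the bookkeeping must therefore keep them grouped into honest Calderon commutators before any norm is taken. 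These are exactly the difficulties handled in \cite{BC,calderon}, and the $L^p$ boundedness of the Hilbert transform for $1<p<\infty$ underlies every step.
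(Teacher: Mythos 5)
First, note that the paper does not prove this statement at all: it is quoted as the Baj\v{s}anski--Coifman extension of Calder\'on's commutator theorem, with \cite{BC} as the source and \cite{DMP}, Lemma 3.1, indicated for an alternative proof. So your proposal is being measured against the literature rather than against an argument in the text. Your Step 1 is correct and clean: since $\mathcal{H}$ commutes with $\partial_x$, the Leibniz rule gives $\partial_x^l[\mathcal{H};\psi]\partial_x^m=\sum_k\binom{l}{k}[\mathcal{H};\psi^{(k)}]\partial_x^{l+m-k}$, and each summand only sees $\psi^{(l+m)}$, so the reduction to $l=0$ is legitimate. The cases $N=0$ and $N=1$ of Step 2 are also fine; for $N=1$ the two pieces $\mathcal{H}M_{\phi'}$ and $C_\phi$ are individually bounded and Calder\'on's theorem is exactly the right citation.

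The gap is in the inductive step for $N\ge 2$. When you integrate by parts $N$ times and regroup the over-singular pieces, the ``honest'' grouped objects are Taylor-remainder kernels such as
\begin{equation*}
\frac{\phi(x)-\phi(y)-\phi'(y)(x-y)}{(x-y)^{3}},\qquad\text{and more generally}\qquad \frac{1}{(x-y)^{N+1}}\Bigl(\phi(x)-\sum_{k<N}\tfrac{\phi^{(k)}(y)}{k!}(x-y)^{k}\Bigr),
\end{equation*}
whose $L^{p}$ bounds with constant $\|\phi^{(N)}\|_{L^\infty}$ are \emph{not} consequences of Calder\'on's theorem \cite{calderon}, which covers only the first commutator $\mathrm{p.v.}\int\frac{\phi(x)-\phi(y)}{(x-y)^2}f(y)\,\mathrm{d}y$. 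The boundedness of these higher-order Calder\'on-type commutators with only $N$ derivatives on $\phi$ is precisely the content of the theorem you are proving (this is why \cite{BC} exists as a separate result); invoking ``Calder\'on's theorem'' for them makes the argument essentially circular. The proposed alternative of absorbing lower-order remainders by the inductive hypothesis applied to $\phi'$ also does not correspond to a workable identity: the exact commutation relation is $[\mathcal{H};\phi]\partial_x=\partial_x[\mathcal{H};\phi]-[\mathcal{H};\phi']$, whose first term reintroduces an outer derivative and undoes your Step 1 reduction; iterating it reduces everything to the extreme case $\partial_x^{N}[\mathcal{H};\phi]$, which by duality ($\mathcal{H}^*=-\mathcal{H}$, $\partial_x^*=-\partial_x$) is equivalent on $L^{p'}$ to the case $[\mathcal{H};\phi]\partial_x^{N}$ you started from. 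So the induction never closes without a genuinely new analytic input at each order --- either the Baj\v{s}anski--Coifman argument itself, the Coifman--Meyer multilinear machinery, or a $T1$-type verification for the remainder kernels. You have correctly located the obstacle, but the proof as written does not overcome it.
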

For a different proof see \cite{DMP} Lemma 3.1.
		
	 In our analysis the Leibniz rule for fractional derivatives, established in \cite{{GRF},{KATOP2},{KPV2}} will be crucial. Even though most of these estimates are valid  in  several dimensions, we  will restrict our attention   to the one-dimensional case. 
	\begin{lem}\label{lema1}
		For $s>0,\, p\in [1,\infty)$
		\begin{equation}\label{eq5}
		\left\|D^{s}(fg)\right\|_{L^{p}} \lesssim \left\|f\right\|_{L^{p_{1}}}\left\|D^{s}g\right\|_{L^{p_{2}}}+\left\|g\right\|_{L^{p_{3}}}\left\|D^{s}f\right\|_{L^{p_{4}}}
		\end{equation}
		with
		\begin{equation*}
		\frac{1}{p}=\frac{1}{p_{1}}+\frac{1}{p_{2}}=\frac{1}{p_{3}}+\frac{1}{p_{4}},\quad p_{j}\in(1,\infty],\quad j=1,2,3,4.
		\end{equation*}
	\end{lem}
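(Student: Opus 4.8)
The plan is to prove the fractional Leibniz rule \eqref{eq5} by a Littlewood--Paley/paraproduct decomposition of the product $fg$, reducing the two genuinely distinct types of frequency interaction to bounded bilinear Fourier multipliers of Coifman--Meyer type, and then treating the remaining high-high interaction by hand, where the hypothesis $s>0$ is what makes the estimate close. First I would fix a dyadic partition of unity: let $\{\Delta_j\}_{j\ge 0}$ denote the Littlewood--Paley projections localizing to frequencies $|\xi|\sim 2^j$ (with $\Delta_0$ absorbing the low frequencies) and $S_j=\sum_{k<j}\Delta_k$ the associated low-frequency cutoffs. Writing $f=\sum_j\Delta_j f$, $g=\sum_k\Delta_k g$, Bony's decomposition gives
\[
fg=\sum_{j}(\Delta_j f)(S_{j-2}g)+\sum_{k}(S_{k-2}f)(\Delta_k g)+\sum_{|j-k|\le 1}(\Delta_j f)(\Delta_k g)=\Pi_1+\Pi_2+\Pi_3,
\]
the high-low, low-high, and high-high paraproducts, respectively.

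For $\Pi_1$ the factor $S_{j-2}g$ has frequency $\ll 2^j$, so $(\Delta_j f)(S_{j-2}g)$ is frequency-localized near $2^j$ and $D^s$ essentially acts as multiplication by $2^{sj}$. Concretely, the operator $(f,g)\mapsto D^s\Pi_1(f,g)$ is a sum over $j$ of blocks whose symbols, after extracting $|\xi|^s$ from the $f$-factor, are smooth and homogeneous of degree $0$ in the relevant conical region, hence satisfy the Coifman--Meyer bounds $|\partial_\xi^a\partial_\eta^b m|\lesssim(|\xi|+|\eta|)^{-a-b}$ uniformly in $j$. The Coifman--Meyer multiplier theorem, together with the Fefferman--Stein vector-valued maximal inequality to reassemble the Littlewood--Paley pieces, then yields
\[
\|D^s\Pi_1(f,g)\|_{L^p}\lesssim\|D^sf\|_{L^{p_4}}\,\|g\|_{L^{p_3}}.
\]
By the symmetry $f\leftrightarrow g$, the term $\Pi_2$ is bounded in exactly the same way by $\|f\|_{L^{p_1}}\|D^sg\|_{L^{p_2}}$. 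The constraints $p_j\in(1,\infty]$ and $p\ge 1$ enter precisely to make these maximal-function and multiplier bounds applicable.

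The main obstacle is $\Pi_3$. Here the two inputs share frequency $\sim 2^j$, but the output frequency can be anything up to $2^j$, so $D^s$ cannot simply be transferred onto a single factor and the Coifman--Meyer mechanism does not directly apply. I would instead estimate each block crudely, using Bernstein's inequality and H\"older, by
\[
\Big\|D^s\big((\Delta_j f)(\Delta_k g)\big)\Big\|_{L^p}\lesssim 2^{sj}\,\|\Delta_j f\|_{L^{p_4}}\,\|\Delta_k g\|_{L^{p_3}},
\]
and then exploit $s>0$: writing $2^{sj}=2^{s(j-k)}2^{sk}$ with $|j-k|\le 1$ and summing the resulting almost-orthogonal series against the square functions of $D^sf$ and of $g$ produces a convergent geometric sum—this convergence is exactly what fails at $s=0$ and is rescued by $s>0$. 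This bounds $\|D^s\Pi_3(f,g)\|_{L^p}$ by $\|D^sf\|_{L^{p_4}}\|g\|_{L^{p_3}}+\|f\|_{L^{p_1}}\|D^sg\|_{L^{p_2}}$. Assembling the three estimates gives \eqref{eq5}. For the precise verification of the Coifman--Meyer conditions and the endpoint bookkeeping I would refer to the arguments in \cite{KATOP2,KPV2,GRF}.
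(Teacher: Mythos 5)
The paper does not prove this lemma at all: it is quoted as a known result with citations to Grafakos--Oh, Kato--Ponce and Kenig--Ponce--Vega, so there is no in-paper argument to compare against. Your sketch is the standard paraproduct proof that underlies those references, and its architecture (Bony decomposition, Coifman--Meyer for the high-low and low-high pieces, a separate treatment of the high-high piece using $s>0$) is correct. Two points deserve tightening. First, in the high-high term the role of $s>0$ is not where you place it: the factor $2^{s(j-k)}$ with $|j-k|\le 1$ is harmless and produces no geometric series. The genuine issue is that $(\Delta_j f)(\Delta_k g)$ has Fourier support in all of $\{|\xi|\lesssim 2^{j}\}$, including near the origin, so to estimate $D^{s}$ of it you must decompose the \emph{output} frequency, writing $\Delta_l D^{s}\Pi_3=\sum_{j\ge l-O(1)}\Delta_l D^{s}\bigl((\Delta_j f)(\Delta_k g)\bigr)$ and using $\|\Delta_l D^{s}h\|\lesssim 2^{ls}\|h\|=2^{(l-j)s}\,2^{js}\|h\|$; the geometric series $\sum_{j\ge l}2^{(l-j)s}$ is the one that converges precisely because $s>0$. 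Second, the reassembly of the pieces via the square-function equivalence and the Fefferman--Stein inequality requires $1<p<\infty$; at the endpoint $p=1$, which the statement allows (e.g.\ $p_1=p_2=2$), the Littlewood--Paley characterization of $L^{1}$ fails and a separate argument (as in Grafakos--Oh) is needed. Neither issue is fatal --- both are handled in the references you cite --- but as written the sketch misidentifies where the hypothesis $s>0$ is used and silently skips the $p=1$ case.
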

		Also,  we  will  state the fractional Leibniz rule proved  by Kenig, Ponce and Vega \cite{KPV1}.
		\begin{lem}\label{lem5}
			Let $s=s_{1}+s_{2}\in (0,1)$  with $s_{1},s_{2}\in (0,s),$  and $p,p_{1},p_{2}\in (1,\infty)$ satisfy
			\begin{equation*}
			\frac{1}{p}=\frac{1}{p_{1}}+\frac{1}{p_{2}}.
			\end{equation*} 
			Then,
			\begin{equation}\label{eq33}
			\|D^{s}(fg)-fD^{s}g-gD^{s}f\|_{L^{p}}\lesssim \|D^{s_{1}}f\|_{L^{p_{1}}}\|D^{s_{2}}g\|_{L^{p_{2}}}.
			\end{equation}
			Moreover, the case $s_{2}=0$ and $p_{2}=\infty$ is allowed.
		\end{lem}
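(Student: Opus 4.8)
The plan is to avoid Fourier-multiplier machinery entirely and instead exploit the pointwise singular-integral representation of $D^{s}$ that is available precisely because $s\in(0,1)$, namely
\begin{equation*}
D^{s}h(x)=c_{s}\int_{\mathbb{R}}\frac{h(x)-h(x-y)}{|y|^{1+s}}\,\mathrm{d}y ,
\end{equation*}
which is valid for Schwartz $h$ (the integral converging absolutely after symmetrizing in $y$) and reproduces the symbol $|\xi|^{s}$ since $\int_{\mathbb{R}}\bigl(1-\cos(\xi y)\bigr)|y|^{-1-s}\,\mathrm{d}y=c_{s}^{-1}|\xi|^{s}$. Writing this representation for each of $D^{s}(fg)$, $fD^{s}g$ and $gD^{s}f$ and collecting them under one integral, a purely algebraic cancellation in the numerator (of the form $ab-AB-(ab-aB)-(ab-Ab)=-(A-a)(B-b)$) produces the exact pointwise identity
\begin{equation*}
D^{s}(fg)-fD^{s}g-gD^{s}f=-c_{s}\int_{\mathbb{R}}\frac{\bigl(f(x-y)-f(x)\bigr)\bigl(g(x-y)-g(x)\bigr)}{|y|^{1+s}}\,\mathrm{d}y .
\end{equation*}
This identity is the heart of the matter: subtracting \emph{both} $fD^{s}g$ and $gD^{s}f$ is exactly what turns the integrand into the product of two first differences $\Delta_{y}f(x)\,\Delta_{y}g(x)$, where $\Delta_{y}h(x):=h(x-y)-h(x)$, so that near $y=0$ the integrand is $O(|y|^{1-s})$ and the whole expression converges absolutely.

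Next I would split the kernel symmetrically using $1+s=(\tfrac12+s_{1})+(\tfrac12+s_{2})$ and apply the Cauchy--Schwarz inequality in the $y$ variable to obtain the pointwise bound
\begin{equation*}
\bigl|D^{s}(fg)-fD^{s}g-gD^{s}f\bigr|(x)\lesssim G_{s_{1}}f(x)\,G_{s_{2}}g(x),\qquad G_{\sigma}h(x):=\left(\int_{\mathbb{R}}\frac{|\Delta_{y}h(x)|^{2}}{|y|^{1+2\sigma}}\,\mathrm{d}y\right)^{1/2}.
\end{equation*}
Hölder's inequality in $x$ with the exponents $p_{1},p_{2}$ then gives $\|G_{s_{1}}f\,G_{s_{2}}g\|_{L^{p}}\le\|G_{s_{1}}f\|_{L^{p_{1}}}\|G_{s_{2}}g\|_{L^{p_{2}}}$. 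The argument closes by invoking the classical square-function characterization of the homogeneous fractional derivative, $\|G_{\sigma}h\|_{L^{q}}\sim\|D^{\sigma}h\|_{L^{q}}$, valid exactly in the range $0<\sigma<1$, $1<q<\infty$; this applies here because $s_{1},s_{2}\in(0,s)\subset(0,1)$ and $p_{1},p_{2}\in(1,\infty)$. Chaining the three estimates yields \eqref{eq33}. I regard the interior case as essentially forced once the cancellation identity is written down — the only technical inputs are the (standard) validity of the pointwise representation for $s\in(0,1)$ and the square-function characterization, both of which I would cite.

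The main obstacle is the endpoint claim $s_{2}=0$, $p_{2}=\infty$ (so $s_{1}=s$), where the square-function characterization degenerates. Bounding $|\Delta_{y}g(x)|\le 2\|g\|_{L^{\infty}}$ crudely and applying Minkowski's inequality only yields $\int_{\mathbb{R}}\|\Delta_{y}f\|_{L^{p}}|y|^{-1-s}\,\mathrm{d}y$, i.e.\ the strictly stronger Besov-type norm $\|f\|_{B^{s}_{p,1}}$ rather than $\|D^{s}f\|_{L^{p}}$, so the pointwise method alone does not reach it. To recover the sharp endpoint I would either pass to the bilinear Coifman--Meyer/Kato--Ponce multiplier framework (writing the difference as a bilinear operator with symbol $\bigl(|\zeta+\eta|^{s}-|\zeta|^{s}-|\eta|^{s}\bigr)/|\zeta|^{s}$ and verifying the Hörmander--Mikhlin-type estimates away from the diagonal and resonant sets), or supply a near/far splitting in $y$ combined with a careful $s_{2}\downarrow 0$ limit that tracks the dependence of the constant. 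This endpoint is where the genuine work lies; the remainder of the proof follows the scheme above with no further difficulty.
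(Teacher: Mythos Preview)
The paper itself does not prove this lemma; it is stated in the preliminaries as a known result of Kenig, Ponce and Vega \cite{KPV1}, so there is no in-paper proof to compare against. Your argument for the interior case is correct and is in fact one of the standard routes to \eqref{eq33}: the algebraic identity
\[
D^{s}(fg)-fD^{s}g-gD^{s}f=-c_{s}\int_{\mathbb{R}}\frac{\Delta_{y}f(x)\,\Delta_{y}g(x)}{|y|^{1+s}}\,\mathrm{d}y
\]
is exact, the Cauchy--Schwarz splitting $|y|^{-(1+s)}=|y|^{-(1/2+s_{1})}|y|^{-(1/2+s_{2})}$ is legitimate, and the square-function characterization $\|G_{\sigma}h\|_{L^{q}}\sim\|D^{\sigma}h\|_{L^{q}}$ for $0<\sigma<1$, $1<q<\infty$ is classical (Stein, Strichartz). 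This is arguably more transparent than the Littlewood--Paley/paraproduct argument used in \cite{KPV1}, which proceeds by frequency decomposition of $f$ and $g$ and treats the high-low, low-high and high-high interactions separately; your approach buys a direct pointwise bound at the cost of relying on the singular-integral representation, which is only available for $s\in(0,1)$.

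Your assessment of the endpoint $s_{2}=0$, $p_{2}=\infty$ is accurate: the square-function route genuinely fails there (the naive bound lands in $\dot B^{s}_{p,1}$, not $\dot H^{s,p}$), and the original \cite{KPV1} proof handles this case precisely through the paraproduct machinery you mention as the alternative. So your write-up is correct for the interior range and honest about where the remaining work lies; to close the endpoint you would indeed need to invoke the Coifman--Meyer-type multiplier analysis rather than a limiting argument, since the constants in the square-function equivalence blow up as $\sigma\downarrow 0$.
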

		A natural question about Lemma \ref{lem5} is to investigate the   possible   generalization  of the estimate   (\ref{eq33}) when $s\geq 1.$ The answer to this question was given recently by D.Li \cite{Dongli}, where he establishes  new fractional Leibniz rules  for the nonlocal operator $D^{s},\,s>0,$  and related ones, including  various end-point situations.	
		\begin{thm}\label{thm11}
			
			Let $s>0$ and $1<p<\infty.$ Then for any  $s_{1},s_{2}\geq 0$ with $s=s_{1}+s_{2},$  and any $f,g\in \mathcal{S}(\mathbb{R^{n}}),$ the following  hold:
			\begin{enumerate}
				\item If $1<p_{1},p_{2}<\infty$ with $\frac{1}{p}=\frac{1}{p_{1}}+\frac{1}{p_{2}},$ then
				\begin{equation}\label{commutator1}
				\begin{split}
				&\left\|D^{s}(fg)-\sum_{\alpha\leq s_{1}}\frac{1}{\alpha!}\partial^{\alpha}_{x}f D^{s,\alpha}g-\sum_{\beta\leq s_{2}}\frac{1}{\beta!}\partial^{\beta}_{x}g D^{s,\beta}f\right\|_{L^{p}}\lesssim \|D^{s_{1}}f\|_{L^{p_{1}}}\|D^{s_{2}}g\|_{L^{p_{2}}}.
				\end{split}
				\end{equation}
				\item If $p_{1}=p,\, p_{2}=\infty,$\, then
			\begin{equation*}
			\begin{split}
			&\left\|D^{s}(fg)-\sum_{\alpha< s_{1}}\frac{1}{\alpha!}\partial^{\alpha}_{x}f D^{s,\alpha}g-\sum_{\beta\leq s_{2}}\frac{1}{\beta!}\partial^{\beta}_{x}g D^{s,\beta}f\right\|_{L^{p}}\lesssim \|D^{s_{1}}f\|_{L^{p}}\|D^{s_{2}}g\|_{\mathrm{BMO}},
			\end{split}
			\end{equation*}
			where $\|\cdot\|_{\mathrm{BMO}}$  denotes the norm in the BMO space\footnote{For any $f\in L^{1}_{loc}(\mathbb{R}^{n})$, the BMO semi-norm is given by 
				\begin{equation*}
				\|f\|_{\mathrm{BMO}}=\sup_{Q}\frac{1}{|Q|}\int_{Q}|f(y)-(f)_{Q}|\,\mathrm{d}y,
				\end{equation*}
				where $(f)_{Q}$ is the average of $f$ on $Q,$ and the supreme is taken over all cubes $Q$ in $\mathbb{R}^{n}.$ }.
			\item If $p_{1}=\infty,\, p_{2}=p,$\, then
				\begin{equation*}
				\begin{split}
				&\left\|D^{s}(fg)-\sum_{\alpha\leq s_{1}}\frac{1}{\alpha!}\partial^{\alpha}f D^{s,\alpha}g-\sum_{\beta< s_{2}}\frac{1}{\beta!}\partial^{\beta}g D^{s,\beta}f\right\|_{L^{p}}\lesssim \|D^{s_{1}}f\|_{\mathrm{BMO}}\|D^{s_{2}}g\|_{L^{p}}.
				\end{split}
				\end{equation*}
				The operator  $D^{s,\alpha}$ is defined  via Fourier transform\footnote{The precise  form of the Fourier transform  does not matter.}
				\begin{equation*}
				\begin{split}
				&\widehat{D^{s,\alpha}g}(\xi)=\widehat{D^{s,\alpha}}(\xi)\widehat{g}(\xi),\\
				&\widehat{D^{s,\alpha}}(\xi)= i^{-\alpha}\partial_{\xi}^{\alpha}\left(|\xi|^{s}\right).
				\end{split}
				\end{equation*}
			\end{enumerate}
%
			\end{thm}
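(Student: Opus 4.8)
The plan is to establish the three estimates by a Littlewood--Paley paraproduct decomposition, expanding the homogeneous symbol $|\xi|^{s}$ by Taylor's formula on the regions where the two input frequencies are widely separated. The crucial structural observation is that the correction operators $D^{s,\beta}$, whose symbols are $i^{-\beta}\partial_{\xi}^{\beta}(|\xi|^{s})$, are precisely the successive Taylor coefficients of $|\xi|^{s}$, and the factors $i^{-\beta}$ are tailored to cancel the powers of $i$ that differentiation produces. First I would fix an inhomogeneous decomposition $1=\sum_{k\geq 0}\varphi_{k}(\xi)$ with $\varphi_{k}$ supported where $|\xi|\sim 2^{k}$ (and $\varphi_{0}$ near the origin), set $f_{k}=P_{k}f$, $g_{k}=P_{k}g$ and $S_{m}=\sum_{k\leq m}P_{k}$, and split $fg$ into the three Bony pieces: a high--low part $\Pi_{1}=\sum_{j}f_{j}\,S_{j-3}g$ (where $f$ carries the high frequency), the symmetric low--high part $\Pi_{2}$, and the diagonal part $\Pi_{3}=\sum_{|j-k|\leq 2}f_{j}g_{k}$.

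For $D^{s}\Pi_{1}$ the output frequency $\zeta=\xi+\eta$ obeys $|\eta|\ll|\xi|\sim|\zeta|$ on each block, so $|\xi+\eta|^{s}$ is smooth in $\eta$ there and Taylor expansion to order $\lfloor s_{2}\rfloor$ gives
\[
|\xi+\eta|^{s}=\sum_{|\beta|\leq s_{2}}\frac{\eta^{\beta}}{\beta!}\,\partial_{\xi}^{\beta}\!\left(|\xi|^{s}\right)+R_{s_{2}}(\xi,\eta).
\]
Reading $\eta^{\beta}\widehat{g}(\eta)=i^{-|\beta|}\widehat{\partial^{\beta}g}(\eta)$ and $\partial_{\xi}^{\beta}(|\xi|^{s})=i^{|\beta|}\widehat{D^{s,\beta}}(\xi)$, the powers of $i$ cancel and the main terms reassemble exactly into $\sum_{\beta\leq s_{2}}\frac{1}{\beta!}\partial^{\beta}g\,D^{s,\beta}f$. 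The remainder $R_{s_{2}}$ is a Taylor remainder of order $\lfloor s_{2}\rfloor+1$, hence of size $|\eta|^{\lfloor s_{2}\rfloor+1}|\xi|^{s-\lfloor s_{2}\rfloor-1}$; after factoring $D^{s_{2}}$ out of $g$ and $D^{s_{1}}$ out of $f$ it leaves a homogeneous-degree-zero symbol comparable to $(|\eta|/|\xi|)^{1-\{s_{2}\}}$, which on $|\eta|\ll|\xi|$ is a uniformly bounded Coifman--Meyer multiplier and supplies geometric decay in the frequency gap. The Coifman--Meyer multiplier theorem and summation then bound $\|D^{s}\Pi_{1}-\sum_{\beta\leq s_{2}}\frac{1}{\beta!}\partial^{\beta}g\,D^{s,\beta}f\|_{L^{p}}$ by $\|D^{s_{1}}f\|_{L^{p_{1}}}\|D^{s_{2}}g\|_{L^{p_{2}}}$; the piece $\Pi_{2}$ is identical with $f,g$ and $s_{1},s_{2}$ interchanged, producing the $\sum_{\alpha\leq s_{1}}$ sum.

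For the diagonal piece $\Pi_{3}$ the two input frequencies are comparable while the output may be far smaller, so no expansion is used; here $|\xi+\eta|^{s}\lesssim|\xi|^{s_{1}}|\eta|^{s_{2}}$ on the support, and distributing the derivatives as $D^{s_{1}}$ on $f$ and $D^{s_{2}}$ on $g$ together with an almost-orthogonality (or Coifman--Meyer) estimate yields $\|D^{s}\Pi_{3}\|_{L^{p}}\lesssim\|D^{s_{1}}f\|_{L^{p_{1}}}\|D^{s_{2}}g\|_{L^{p_{2}}}$, with no correction terms generated. Collecting the three pieces proves (1). For the endpoints (2) and (3), where $p_{2}=\infty$ (resp. $p_{1}=\infty$) is relaxed to $\mathrm{BMO}$, I would replace the Coifman--Meyer step on the relevant paraproduct by the boundedness of the paraproduct with one $\mathrm{BMO}$ entry, $\|\pi_{b}(h)\|_{L^{p}}\lesssim\|h\|_{L^{p}}\|b\|_{\mathrm{BMO}}$; this is exactly what lets the low-frequency factor be measured in $\mathrm{BMO}$, at the cost of dropping the top-order term, which accounts for the strict inequalities $\alpha<s_{1}$ and $\beta<s_{2}$ in those cases.

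The hard part will be the non-integer regime. When $s_{1}$ or $s_{2}$ is not an integer the symbol $|\xi|^{s}$ has only finite smoothness, the Taylor expansion can be pushed only to finite order, and the remainder genuinely carries a fractional derivative; showing that it is dominated \emph{without loss} by $\|D^{s_{1}}f\|_{L^{p_{1}}}\|D^{s_{2}}g\|_{L^{p_{2}}}$ requires sharp block-by-block homogeneity bookkeeping and careful control of the singularity of $|\xi|^{s}$ at the frequency origin, isolated by $\varphi_{0}$. One must also verify that the rescaled remainder symbols satisfy Coifman--Meyer bounds uniformly in the dyadic parameters, so that the bilinear multiplier theorem applies with a constant independent of the blocks; this uniformity, rather than any single estimate, is the technical crux.
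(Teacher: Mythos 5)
The paper does not actually prove this theorem: it is quoted verbatim from D.~Li's work and the ``proof'' in the text is a one-line citation to \cite{Dongli}. So there is no in-paper argument to compare against; the relevant comparison is with the cited source. Your strategy --- Bony paraproduct decomposition, Taylor expansion of $|\xi|^{s}$ in the low frequency on the unbalanced pieces, identification of the correction terms with the operators $D^{s,\alpha}$ via the cancellation of the powers of $i$, and Coifman--Meyer bounds for the remainder symbols --- is essentially the strategy Li uses, and your bookkeeping of the remainder (size $(|\eta|/|\xi|)^{1-\{s_{2}\}}$ after factoring $D^{s_{1}}$ and $D^{s_{2}}$ out of the inputs) is dimensionally correct. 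However, as written this is a plan rather than a proof: the two steps you yourself flag as the crux --- the uniform (in the dyadic parameters) Coifman--Meyer bounds for the rescaled fractional remainder symbols, and the $\mathrm{BMO}$ endpoints, which in Li's treatment require genuine Carleson-measure/square-function arguments and not merely the boundedness of a paraproduct with a $\mathrm{BMO}$ entry --- are named but not carried out. You should also be careful that in cases (2) and (3) the term dropped from the sum is the top-order term attached to the factor measured in $L^{p}$, not the one measured in $\mathrm{BMO}$; check the indices against the statement, since your sketch is ambiguous on which paraproduct loses its leading Taylor coefficient. Given that the paper itself defers entirely to \cite{Dongli}, deferring these points to the same reference is acceptable here, but they are exactly where the real work lies.
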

			\begin{rem}
		As usual  empty summation  (such as $\sum_{0\leq\alpha<0}$) is defined as zero.
	\end{rem}
	\begin{proof}
			For a detailed proof of this Theorem and related results, see \cite{Dongli}.
	\end{proof}
		Next we have  the following commutator  estimates involving  non-homogeneous fractional derivatives, established by Kato and Ponce  .
		\begin{lem}[\cite{KATOP2}]
		Let $s>0$ and $p,p_{2},p_{3}\in (1,\infty)$ and $p_{1},p_{4}\in (1,\infty]$	 be such that 
		\begin{equation*}
		\frac{1}{p}=\frac{1}{p_{1}}+\frac{1}{p_{2}}=\frac{1}{p_{3}}+\frac{1}{p_{4}}.
		\end{equation*}
		Then,
		\begin{equation}\label{eq91}
		\|[J^{s}; f]g\|_{L^{p}}\lesssim \|\partial_{x}f\|_{L^{p_{1}}}\|J^{s-1}g\|_{L^{p_{2}}}+\|J^{s}f\|_{L^{p_{3}}}\|g\|_{L^{p_{4}}}
		\end{equation}
		and 
		\begin{equation}\label{eq90}
		\|J^{s}(fg)\|_{L^{p}}\lesssim \|J^{s}f\|_{L^{p_{1}}}\|g\|_{L^{p_{2}}}+\|J^{s}g\|_{L^{p_{3}}}\|f\|_{L^{p_{4}}}.
		\end{equation}
	\end{lem}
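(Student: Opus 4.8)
The plan is to prove both inequalities by a Littlewood--Paley/paraproduct analysis, isolating the single piece in which one genuinely gains a derivative. Fix an inhomogeneous dyadic partition of unity $1=\sum_{j\geq0}\phi_j(\xi)$ with associated projections $\Delta_j$ localizing frequencies to $|\xi|\sim2^j$ for $j\geq1$ and to $|\xi|\lesssim1$ for $j=0$, and set $S_j=\sum_{k<j}\Delta_k$. I will use repeatedly that Bernstein's inequality gives $\|J^s\Delta_j h\|_{L^r}\sim 2^{js}\|\Delta_j h\|_{L^r}$; that the square function characterizes $\|J^s h\|_{L^r}\sim\|(\sum_j 2^{2js}|\Delta_j h|^2)^{1/2}\|_{L^r}$ for $1<r<\infty$; that $|S_k h|\lesssim Mh$ pointwise, where $M$ is the Hardy--Littlewood maximal operator; and that the Fefferman--Stein vector-valued maximal inequality lets me pull such factors out of a square function.

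For the product rule \eqref{eq90} I would use Bony's decomposition $fg=T_fg+T_gf+R(f,g)$, with $T_fg=\sum_k S_{k-2}f\,\Delta_k g$, $T_gf$ its symmetric counterpart, and $R(f,g)=\sum_{|j-k|\leq1}\Delta_jf\,\Delta_kg$. In $T_fg$ the output is frequency-localized at $\sim2^k$, so $J^s$ costs $2^{ks}$, which I place on $g$; bounding $|S_{k-2}f|\lesssim Mf$ and applying H\"older together with Fefferman--Stein yields $\lesssim\|f\|_{L^{p_4}}\|J^sg\|_{L^{p_3}}$. The symmetric term $T_gf$ gives $\lesssim\|g\|_{L^{p_2}}\|J^sf\|_{L^{p_1}}$. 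For the resonant remainder $R(f,g)$ the output frequency is only $\lesssim2^j$, so $J^s$ still costs at most $2^{js}$; since $s>0$ the geometric factor $2^{js}$ is summable after placing it on one factor and using H\"older, Cauchy--Schwarz in $j$, and the square-function bound, reproducing the right-hand side of \eqref{eq90}.

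For the commutator \eqref{eq91} write $[J^s;f]g=\big(J^s(T_fg)-T_f(J^sg)\big)+\big(J^s(T_gf+R(f,g))-T_{J^sg}f-R(f,J^sg)\big)$. Every term in the second group is controlled by $\lesssim\|J^sf\|_{L^{p_3}}\|g\|_{L^{p_4}}$ using exactly the Littlewood--Paley/maximal-function estimates of the previous paragraph (in each resonant or high--low term the frequencies are comparable, so it is immaterial whether $J^s$ is read as falling on $f$ or on $g$). The entire subtlety is the first group, the commutator $[J^s,T_f]g$. Here the key point is that $S_{k-2}f$ carries frequency $|\eta|\lesssim2^{k-2}$, much smaller than the frequency $|\xi|\sim2^k$ of $\Delta_kg$, so the symbol difference obeys a mean-value estimate
\[
\big|\langle\xi+\eta\rangle^{s}-\langle\xi\rangle^{s}\big|\lesssim |\eta|\,\langle\xi\rangle^{s-1}\sim |\eta|\,2^{k(s-1)},
\]
i.e. one power of $2^k$ is traded for a factor $|\eta|$, which amounts to a derivative landing on $f$. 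Making this precise (a first-order Taylor expansion of $\langle\cdot\rangle^{s}$, with the remainder organized as a Coifman--Meyer multiplier, or equivalently a direct kernel estimate) converts $|\eta|\,S_{k-2}f$ into $S_{k-2}(\partial_x f)$ up to harmless scale-invariant multipliers, while the surviving $2^{k(s-1)}$ pairs with $\Delta_k g$ to form $J^{s-1}g$. H\"older plus Fefferman--Stein then gives $\lesssim\|\partial_x f\|_{L^{p_1}}\|J^{s-1}g\|_{L^{p_2}}$, completing \eqref{eq91}.

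The main obstacle is precisely this gain-of-one-derivative step for $[J^s,T_f]$: one must quantify that commuting $J^s$ past multiplication by a low-frequency function improves the commutator by exactly one derivative, \emph{uniformly across dyadic scales}, and arrange the resulting symbol as a Coifman--Meyer (or Mihlin-type) multiplier so that its operator norm is scale-invariant and the $k$-sum converges. The endpoint cases $p_1=\infty$ and $p_4=\infty$ require only minor care, since there the relevant low-frequency factor is extracted by the trivial bound $\|S_k h\|_{L^\infty}\lesssim\|h\|_{L^\infty}$ rather than by boundedness of $M$ on $L^\infty$.
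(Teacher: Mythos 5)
The paper does not prove this lemma; it is quoted from Kato--Ponce \cite{KATOP2}, so there is no internal argument to measure yours against. Your paraproduct proof is correct in outline and is the standard modern route to this statement: Bony's decomposition, Fefferman--Stein to extract the low-frequency factors, summability of the resonant block from $s>0$, and --- the genuine content --- the mean-value bound $|\langle\xi+\eta\rangle^{s}-\langle\xi\rangle^{s}|\lesssim|\eta|\,\langle\xi\rangle^{s-1}$ on $\{|\eta|\ll|\xi|\}$, which trades one derivative from $g$ to $f$ inside $[J^{s},T_{f}]g$ and produces the first term of \eqref{eq91}. This is essentially the scheme of Grafakos--Oh and of D.~Li \cite{Dongli} (both already invoked in the paper for the related Leibniz rules), whereas the original Kato--Ponce argument went through an integral representation of $J^{s}$ together with the Coifman--Meyer commutator theorem and was stated only for the endpoint split $p_{1}=p_{4}=\infty$; the mixed-exponent version quoted here really belongs to the later refinements, so your route is the natural one for the statement as written, and what each approach buys is clear: yours yields the full range of H\"older splittings and isolates exactly where the derivative is gained, while the original is shorter if one only needs the $L^{\infty}$ endpoints. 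Three small caveats. First, what you present is a scheme rather than a proof: organizing the Taylor remainder of $\langle\cdot\rangle^{s}$ as a bilinear multiplier whose operator norm is uniform in $k$, and the almost-orthogonality that resums the outputs (each localized at frequency $\sim 2^{k}$) into a square function, is the entire technical content and is only named --- though you flag this honestly. Second, in your ``second group'' the term $T_{J^{s}g}f$ does \emph{not} have comparable frequencies ($J^{s}g$ sits at low frequency, $f$ at high), so the justification is not that it is immaterial where $J^{s}$ falls, but that $s>0$ gives $|S_{k-2}J^{s}g|\lesssim\sum_{j<k-2}2^{js}Mg\lesssim 2^{ks}Mg$, after which $2^{ks}$ attaches to $\Delta_{k}f$; similarly, in the resonant pieces with $p_{4}=\infty$ one should keep a $\sup_{j}$ rather than an $\ell^{2}$ sum on the $g$-factor, since the square function maps $L^{\infty}$ only into $\mathrm{BMO}$. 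Third, the Hardy--Littlewood maximal operator is trivially bounded on $L^{\infty}$; the genuine endpoint issue is the failure of the vector-valued Fefferman--Stein inequality at $p=\infty$, which is harmless here because only a scalar low-frequency factor is ever placed in $L^{p_{1}}$ or $L^{p_{4}}$. None of these affects the correctness of the approach.
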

There are many  other reformulations and generalizations of the  Kato-Ponce  commutator  inequalities (cf. \cite{Benyi} and the references therein). Recently  D. Li\,\cite{Dongli}, has obtained a   family of refined Kato-Ponce type inequalities for the operator $D^{s}.$  In particular  he showed that  

\begin{lem}\label{dlkp}
	Let $1<p<\infty.$  Let $1<p_{1},p_{2},p_{3},p_{4}\leq \infty$ satisfy 
	\begin{equation*}
	\frac{1}{p}=\frac{1}{p_{1}}+\frac{1}{p_{2}}=\frac{1}{p_{3}}+\frac{1}{p_{4}}.
	\end{equation*}
	Therefore,
\begin{itemize}
	\item[(a)] If $0< s\leq 1,$  then 
	\begin{equation*}
	\|D^{s}(fg)-fD^{s}g \|_{L^{p}}\lesssim \|D^{s-1}\partial_{x}f \|_{L^{p_{1}}}\|g\|_{L^{p_{2}}}.
	\end{equation*}
		\item[(b)] If $s>1,$  then 
	\begin{equation}\label{kpdl}
	\|D^{s}(fg)-fD^{s}g\|_{L^{p}}\lesssim \|D^{s-1}\partial_{x}f\|_{L^{p_{1}}}\|g\|_{L^{p_{2}}}+\|\partial_{x}f\|_{L^{p_{3}}}\|D^{s-1}g\|_{L^{p_{4}}}.
	\end{equation}
\end{itemize}
		\end{lem}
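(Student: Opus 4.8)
The plan is to regard $[D^{s};f]g:=D^{s}(fg)-fD^{s}g$ as a bilinear Fourier multiplier and to analyze it by a paraproduct (Littlewood--Paley) decomposition. Writing $\zeta$ for the frequency dual to $f$ and $\eta$ for the one dual to $g$, one has
\begin{equation*}
\widehat{[D^{s};f]g}(\xi)=\int_{\mathbb{R}}\big(|\zeta+\eta|^{s}-|\eta|^{s}\big)\,\widehat{f}(\zeta)\,\widehat{g}(\eta)\,\mathrm{d}\eta,\qquad \zeta=\xi-\eta,
\end{equation*}
so the whole estimate is governed by the symbol $m(\zeta,\eta)=|\zeta+\eta|^{s}-|\eta|^{s}$. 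First I would split frequency space into the region $\mathrm{I}=\{|\eta|\le\tfrac12|\zeta|\}$, where $f$ carries the output frequency, and the region $\mathrm{II}=\{|\zeta|\le\tfrac12|\eta|\}$, where $g$ does, the transition zone $|\zeta|\sim|\eta|$ being absorbed into either piece.

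In region $\mathrm{I}$ one has $|\zeta+\eta|\sim|\zeta|$, hence $m(\zeta,\eta)=\big(i\zeta\,|\zeta|^{s-1}\big)\,n(\zeta,\eta)$, where $n=m/(i\zeta|\zeta|^{s-1})$ is, after dyadic localization, a bounded Coifman--Meyer symbol (homogeneous of degree $0$ and smooth on the cone $|\eta|\le\tfrac12|\zeta|$, away from the origin). The prefactor $i\zeta|\zeta|^{s-1}$ is precisely the multiplier of $D^{s-1}\partial_{x}$, so the Coifman--Meyer theorem gives a bound $\lesssim\|D^{s-1}\partial_{x}f\|_{L^{p_{1}}}\|g\|_{L^{p_{2}}}$, which is the first term of both (a) and (b). I would deliberately keep the factorization in the $D^{s-1}\partial_{x}$ form rather than convert to $D^{s}f$, so as to accommodate the endpoint $p_{1}=\infty$, where the Hilbert transform relating $D^{s}$ and $D^{s-1}\partial_{x}$ is unbounded; the $L^{\infty}$ endpoint on the other factor is absorbed via $\|\cdot\|_{\mathrm{BMO}}\lesssim\|\cdot\|_{L^{\infty}}$.

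Region $\mathrm{II}$ is where (a) and (b) part ways. For $0<s\le 1$ the elementary inequality $\big||\zeta+\eta|^{s}-|\eta|^{s}\big|\le|\zeta|^{s}$ again lets me factor out $i\zeta|\zeta|^{s-1}$ with a bounded cofactor, so region $\mathrm{II}$ contributes only the first term and (a) follows. For $s>1$ this fails, since on $\{|\zeta|\ll|\eta|\}$ one has $m\sim|\eta|^{s-1}|\zeta|\gg|\zeta|^{s}$; instead I would Taylor expand $m(\zeta,\eta)=s\,\eta|\eta|^{s-2}\zeta+O\!\big(|\zeta|^{2}|\eta|^{s-2}\big)$ and factor the leading part as $(i\zeta)\,(|\eta|^{s-1})\,(\text{bounded cofactor}=\tfrac{s}{i}\,\mathrm{sgn}\,\eta)$, placing one derivative $\partial_{x}$ on $f$ and $D^{s-1}$ on $g$. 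This yields the second term $\|\partial_{x}f\|_{L^{p_{3}}}\|D^{s-1}g\|_{L^{p_{4}}}$ of (b); the quadratic remainder is of strictly lower order in $|\zeta|/|\eta|$ and is controlled by the same mechanism, summing the resulting convergent Littlewood--Paley series.

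The main obstacle lives entirely in the multiplier analysis, not in the algebra. One must verify that the cofactors obtained after dividing $m$ by $i\zeta|\zeta|^{s-1}$ (resp.\ by $i\zeta|\eta|^{s-1}$) satisfy the Marcinkiewicz-type derivative bounds uniformly on each dyadic block, with extra care near the diagonal $|\zeta|\sim|\eta|$ and near the frequency origin, where $|\cdot|^{s}$ is only $C^{\lfloor s\rfloor}$; and the endpoint exponents $p_{i}=\infty$ force one to carry the $\partial_{x}$-factorization all the way through the estimate rather than trading $D^{s-1}\partial_{x}f$ for $D^{s}f$. As a consistency check, the two surviving terms are exactly the $\beta=0$ and $\alpha=1$ terms one reads off from the full Leibniz expansion of Theorem~\ref{thm11} (taking $s_{2}\in(0,1)$, $s_{1}=s-s_{2}$): the $\alpha=1$ term is $\partial_{x}f\,D^{s,1}g\sim\partial_{x}f\,D^{s-1}g$ and the $\beta=0$ term is $g\,D^{s}f$, with the remaining terms for $s\ge 2$ recovered by Gagliardo--Nirenberg interpolation between the two displayed extremes; the uniform statement over all $s>1$ together with the endpoints is, however, cleanest through the direct multiplier argument above.
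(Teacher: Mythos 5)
The paper does not actually prove this lemma: it is quoted from D.~Li's work on refined Kato--Ponce inequalities, and the text defers entirely to Section~5 of that reference. So the comparison here is between your sketch and the cited argument. Your paraproduct architecture is the right one, and your treatment of the region $|\eta|\le\tfrac12|\zeta|$ --- factor out $i\zeta|\zeta|^{s-1}$, bound the resulting smooth, degree-zero-homogeneous cofactor by a Coifman--Meyer type theorem, and keep the $D^{s-1}\partial_{x}$ normalization so as to survive the endpoint $p_{1}=\infty$ --- is essentially how the reference proceeds there. Likewise the Taylor expansion $m(\zeta,\eta)=s\,\mathrm{sgn}(\eta)|\eta|^{s-1}\zeta+O\bigl(|\zeta|^{2}|\eta|^{s-2}\bigr)$ correctly produces the term $\|\partial_{x}f\|_{L^{p_{3}}}\|D^{s-1}g\|_{L^{p_{4}}}$ in part (b), with a genuinely lower-order remainder.

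The gap is in region $\mathrm{II}$ for part (a). From $\bigl||\zeta+\eta|^{s}-|\eta|^{s}\bigr|\le|\zeta|^{s}$ you conclude that dividing by $i\zeta|\zeta|^{s-1}$ leaves a bounded cofactor and that region $\mathrm{II}$ is then handled ``by the same mechanism.'' Boundedness of a bilinear symbol does not imply boundedness of the operator, and this particular cofactor is not a Coifman--Meyer symbol: on $|\zeta|\ll|\eta|$ the same Taylor expansion shows the cofactor behaves like $\mathrm{sgn}(\zeta)\,\mathrm{sgn}(\eta)\,|\eta|^{s-1}|\zeta|^{1-s}$, whose $\zeta$-derivative is of size $|\eta|^{s-1}|\zeta|^{-s}$, vastly larger than the required $(|\zeta|+|\eta|)^{-1}$ in that cone. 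This low-frequency-$f$/high-frequency-$g$ interaction is exactly the hard part of the endpoint fractional Leibniz theory: one must work block by block, convert $\|\partial_{x}f_{<k}\|_{L^{p_{1}}}$ into $2^{k(1-s)}\|D^{s-1}\partial_{x}f\|_{L^{p_{1}}}$ by Bernstein, match this against the factor $2^{k(s-1)}$ carried by $D^{s-1}\mathcal{H}g_{k}$, and then sum the almost-orthogonal output frequencies with square-function or vector-valued maximal estimates (with additional care at $p_{i}=\infty$ and at $s=1$). Deferring this to ``verify the Marcinkiewicz-type derivative bounds'' does not close the argument, because for your factorization those bounds genuinely fail; the verification you postpone is the content of the theorem, which is why the paper cites Li rather than giving a short proof.
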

	For a more detailed exposition on these estimates see section 5 in  \cite{Dongli}.
	
	In addition,  we have the following inequality of  Gagliardo-Nirenberg type:
	\begin{lem}\label{lema2}
		Let  $1<q,p<\infty,\, 1<r\leq \infty$ and $0<\alpha<\beta.$  Then, 
		\begin{equation*}
		\left\|D^{\alpha}f\right\|_{L^{p}}\lesssim c \|f\|_{L^{r}}^{1-\theta}\|D^{\beta}f\|_{L^{q}}^{\theta}
		\end{equation*}
		with
		\begin{equation*}
		\frac{1}{p}-\alpha=(1-\theta)\frac{1}{r}+\theta(\frac{1}{q}-\beta),\quad \theta\in [\alpha/\beta,1].
		\end{equation*}
	\end{lem}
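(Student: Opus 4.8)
The plan is to prove the inequality by a Littlewood--Paley decomposition of $f$ into dyadic frequency blocks, combined with Bernstein's inequality, and then to optimize over the frequency at which the resulting sum is split. Write $f=\sum_{N}P_{N}f$, where $N$ runs over dyadic numbers and $P_{N}$ localizes frequencies to $|\xi|\sim N$; since $D^{\alpha}$ acts essentially as multiplication by $N^{\alpha}$ on $P_{N}f$, the triangle inequality gives
\begin{equation*}
\|D^{\alpha}f\|_{L^{p}}\lesssim \sum_{N}N^{\alpha}\|P_{N}f\|_{L^{p}}.
\end{equation*}
I would split the sum at a threshold $N_{0}$, to be chosen later, controlling the low-frequency part ($N\le N_{0}$) by $\|f\|_{L^{r}}$ and the high-frequency part ($N>N_{0}$) by $\|D^{\beta}f\|_{L^{q}}$.

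For the low frequencies, Bernstein's inequality gives $\|P_{N}f\|_{L^{p}}\lesssim N^{1/r-1/p}\|P_{N}f\|_{L^{r}}\lesssim N^{1/r-1/p}\|f\|_{L^{r}}$, so with $a:=\alpha+\tfrac1r-\tfrac1p$,
\begin{equation*}
\sum_{N\le N_{0}}N^{\alpha}\|P_{N}f\|_{L^{p}}\lesssim \Big(\sum_{N\le N_{0}}N^{a}\Big)\|f\|_{L^{r}}\lesssim N_{0}^{a}\,\|f\|_{L^{r}},
\end{equation*}
the geometric series being dominated by its top term because $a>0$. For the high frequencies, Bernstein together with $\|P_{N}f\|_{L^{q}}=N^{-\beta}\|D^{\beta}P_{N}f\|_{L^{q}}$ yields, with $b:=\alpha+\tfrac1q-\tfrac1p-\beta$,
\begin{equation*}
\sum_{N>N_{0}}N^{\alpha}\|P_{N}f\|_{L^{p}}\lesssim \Big(\sum_{N>N_{0}}N^{b}\Big)\|D^{\beta}f\|_{L^{q}}\lesssim N_{0}^{b}\,\|D^{\beta}f\|_{L^{q}},
\end{equation*}
the series converging because $b<0$. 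The positivity of $a$ and the negativity of $b$ are precisely the constraints encoded in the admissible range $\theta\in[\alpha/\beta,1]$ read off from the scaling relation.

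Adding the two bounds produces $\|D^{\alpha}f\|_{L^{p}}\lesssim N_{0}^{a}\|f\|_{L^{r}}+N_{0}^{b}\|D^{\beta}f\|_{L^{q}}$ for every dyadic $N_{0}$. Choosing $N_{0}$ so that the two terms balance, namely $N_{0}^{a-b}\sim \|D^{\beta}f\|_{L^{q}}/\|f\|_{L^{r}}$, gives
\begin{equation*}
\|D^{\alpha}f\|_{L^{p}}\lesssim \|f\|_{L^{r}}^{1-\theta}\,\|D^{\beta}f\|_{L^{q}}^{\theta},\qquad \theta=\frac{a}{a-b}.
\end{equation*}
A short computation, using $a-b=\tfrac1r-\tfrac1q+\beta$ and $a=\theta(a-b)$, shows that this $\theta$ is exactly the one determined by $\tfrac1p-\alpha=(1-\theta)\tfrac1r+\theta(\tfrac1q-\beta)$, as required.

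The main obstacle is that the form of Bernstein's inequality used above requires $p\ge r$ and $p\ge q$; covering the whole range $1<p<\infty$ forces one to treat the cases $r>p$ or $q>p$ separately, for instance by interpolating the resulting endpoint estimates or by passing to the dual formulation. The endpoint $r=\infty$ is likewise delicate, since Bernstein from $L^{\infty}$ to $L^{p}$ fails on the line and the low-frequency piece must be handled by a different pairing. A cleaner but less self-contained alternative is to deduce the statement from the embedding properties of homogeneous Sobolev spaces together with complex interpolation between $\dot{H}^{\alpha}$-type spaces, which avoids the case analysis at the cost of citing those results.
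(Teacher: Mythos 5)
Your argument is essentially correct in its main regime and is a genuine proof, whereas the paper offers none: it simply refers to Bergh--L\"ofstr\"om, Chapter 4, where the estimate is obtained from the real/complex interpolation theory of (homogeneous) Sobolev--Besov spaces. Your route --- dyadic decomposition, Bernstein on each block, summing two geometric series and optimizing the splitting frequency $N_{0}$ --- is the standard self-contained alternative; it buys transparency and an explicit constant at the cost of the case analysis you yourself identify ($r>p$ or $q>p$, where Bernstein goes the wrong way, and the endpoint $r=\infty$). The interpolation-space route buys exactly the opposite: all exponent configurations and the $L^{\infty}$ endpoint come out of the general machinery, but nothing is self-contained. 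For the way the lemma is actually used in this paper (almost always with $p=q=r=2$), your ``good'' case already suffices.

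Two small corrections to your commentary. First, the equivalences are $b\le 0\iff\theta\le 1$ and $a\ge 0\iff\theta\ge 0$; the stronger requirement $\theta\ge\alpha/\beta$ is equivalent to the convexity condition $\tfrac1p\le\bigl(1-\tfrac{\alpha}{\beta}\bigr)\tfrac1r+\tfrac{\alpha}{\beta}\tfrac1q$, not to $a>0$, so your sentence identifying $a>0$, $b<0$ with the range $\theta\in[\alpha/\beta,1]$ is not quite accurate. Second, the stated range includes $\theta=1$, where $b=0$ and your high-frequency geometric series diverges; that endpoint is the Sobolev embedding $\dot W^{\beta-\alpha,q}\hookrightarrow L^{p}$ and needs a separate (Hardy--Littlewood--Sobolev) argument, just as the boundary case $\theta=\alpha/\beta$ may. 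Neither point undermines the core of the argument, but both should be flagged alongside the Bernstein-direction caveat you already mention.
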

	\begin{proof}
		See \cite{BL} chapter 4.	
	\end{proof}
	Now, we present a result that will help us  to establish the propagation of regularity of solutions of (\ref{eq7}). A previous result was proved  by  Kenig et al.(c.f \cite{KLPV}, Corollary 2.1)  using  the fact  that $J^{r}$ ($r\in \mathbb{R}$) can be seen as a pseudo-differential operator. Thus,  this approach allows to obtain an expression for $J^{r}$ in terms of a convolution with a certain kernel $k(x,y)$ which enjoys some properties on localized regions in $\mathbb{R^{2}}.$ In fact, this is known as the singular integral realization  of a pseudo-differential operator, whose proof can be found in \cite{stein2} Chapter 4.
		
	The estimate  we  consider here involves  the non-local operator $D^{s}$ instead of $J^{s}$. 
		\begin{lem}\label{lemma1}
		Let $m\in \mathbb{Z}^{+}$ and $s\geq0.$   If  $f\in L^{2}(\mathbb{R})$ and $g\in L^{p}(\mathbb{R}),\,\,2\leq p\leq \infty,$ with
		\begin{equation}\label{cond1}
		\mathrm{dist}\left(\supp(f),\supp(g)\right)\geq \delta>0.
		\end{equation}
				Then
		\begin{equation*}
		\left\|g\,\partial^{m}_{x}D^{s}f\right\|_{L^{2}}\lesssim\|g\|_{L^{p}}\|f\|_{L^{2}}.
		\end{equation*}
	\end{lem}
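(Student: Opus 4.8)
\emph{Proof sketch.} The plan is to exploit the separation of supports in order to replace the nonlocal operator $\partial_x^m D^s$ by convolution against the \emph{smooth, off-diagonal part} of its kernel, and then to close the estimate by Hölder's and Young's inequalities. The only genuine analytic input is an off-diagonal bound for the convolution kernel of $T:=\partial_x^m D^s$, namely $K=\bigl((i\xi)^m|\xi|^s\bigr)^{\widecheck{\hspace{1mm}}}$. Since the symbol $(i\xi)^m|\xi|^s$ is homogeneous of degree $m+s\ge 1$ and lies in $C^\infty(\mathbb{R}\setminus\{0\})$, its inverse Fourier transform agrees, away from the origin, with a smooth function that is homogeneous of degree $-(1+m+s)$; this is precisely the singular-integral realization alluded to before the statement (cf. \cite{stein2}), the essential difference with the Bessel case $J^s$ treated in \cite{KLPV} being that the singularity of $|\xi|^s$ at $\xi=0$ produces only polynomial, rather than rapid, off-diagonal decay. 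Thus there is $C=C(m,s,\delta)$ with
\begin{equation*}
|K(z)|\le C\,|z|^{-(1+m+s)},\qquad |z|\ge \delta/2 .
\end{equation*}

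Next I would introduce a cutoff $\chi\in C^\infty(\mathbb{R})$ with $0\le\chi\le1$, $\chi\equiv1$ on $\supp(g)$, and $\supp(\chi)$ contained in the $\delta/2$-neighborhood of $\supp(g)$, so that $g=g\chi$ and $\dist\bigl(\supp(\chi),\supp(f)\bigr)\ge \delta/2$. Writing $K=K_0+K_1$, where $K_0$ is the diagonal singularity (supported in $\{|z|<\delta/4\}$) and $K_1\in C^\infty$ obeys the bound above on $\{|z|\ge\delta/4\}$, one sees that $K_0*f$ vanishes on $\supp(\chi)$, because $f$ is supported at distance $\ge\delta/2$ from $\supp(\chi)$. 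Hence for every $x\in\supp(\chi)$ the integration variable runs over points with $|x-y|\ge\delta/2$, and the kernel bound yields the pointwise domination
\begin{equation*}
\bigl|\chi(x)\,Tf(x)\bigr|\le \int \Phi(x-y)\,|f(y)|\,\mathrm{d}y=(\Phi*|f|)(x),\qquad \Phi(z):=\mathbf{1}_{\{|z|\ge\delta/2\}}\,|z|^{-(1+m+s)} .
\end{equation*}
Since $1+m+s>1$, the function $\Phi$ is bounded and integrable, so $\Phi\in L^{q}(\mathbb{R})$ for every $q\in[1,\infty]$.

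Finally I would combine these facts. Writing $g\,\partial_x^m D^s f=g\,\chi\,Tf$ and applying Hölder with $\tfrac1r=\tfrac12-\tfrac1p$ (so $r\in[2,\infty]$), followed by Young's inequality with exponent $p'$ determined by $\tfrac1{p'}=\tfrac12+\tfrac1r$, gives
\begin{equation*}
\|g\,\partial_x^m D^s f\|_{L^2}\le \|g\|_{L^p}\,\|\chi\,Tf\|_{L^r}\le \|g\|_{L^p}\,\|\Phi*|f|\|_{L^r}\le \|g\|_{L^p}\,\|\Phi\|_{L^{p'}}\,\|f\|_{L^2},
\end{equation*}
and $\|\Phi\|_{L^{p'}}<\infty$ because $p'\in[1,2]$. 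This is exactly the asserted bound with $c=c(m,s,p,\delta)$. The endpoints $p=2$ (here $r=\infty$, $p'=2$) and $p=\infty$ (here $r=2$, $p'=1$) are included, corresponding respectively to estimating $\Phi*|f|$ by Cauchy--Schwarz and by the $L^1$-kernel Young inequality.

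I expect the single real obstacle to be the justification of the off-diagonal kernel estimate for $\partial_x^m D^s$ with the correct homogeneity and the careful handling of the diagonal singularity $K_0$; once the polynomial decay $|z|^{-(1+m+s)}$ is in hand, the rest is a routine Hölder--Young argument that, in contrast to the $J^s$ treatment of \cite{KLPV}, requires no full pseudo-differential calculus beyond localizing $K$ away from its singularity.
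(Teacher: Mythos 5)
Your argument is correct and is essentially the paper's own proof: both reduce the claim to the observation that, thanks to the support separation, $g\,\partial_x^m D^s f$ only sees the off-diagonal part of the convolution kernel of $\partial_x^m D^s$, which is bounded by $|z|^{-(1+m+s)}$ on $\{|z|\gtrsim \delta\}$ and hence lies in every $L^q$, after which H\"older and Young close the estimate with the same exponent bookkeeping. The only cosmetic difference is that the paper produces this kernel explicitly, via the meromorphic continuation of the Riesz-kernel identity $c(z)|y|^{-1-z}$ followed by $m$ integrations by parts to absorb $\partial_x^m$, whereas you fold $\partial_x^m$ into the symbol $(i\xi)^m|\xi|^s$ and invoke its homogeneity directly.
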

	\begin{proof}
			Let  $f,g$ be  functions in the Schwartz class  satisfying (\ref{cond1}).
	
			Notice that 
		\begin{equation}\label{gp5}
		\begin{split}
		g(x)\left(D_{x}^{s}\partial_{x}^{m}f\right)(x)&=\frac{g(x)}{(2\pi)^{1/2}}\int_{\mathbb{R}}\ex^{ix\xi} |\xi|^{s}\widehat{\partial_{x}^{m}f}(\xi)\,\mathrm{d}\xi\\
		&=\frac{g(x)}{(2\pi)^{1/2}}\int_{\mathbb{R}}|\xi|^{s}\widehat{\left(\tau_{-x}\partial_{x}^{m}f\right)}(\xi)\,\mathrm{d}\xi.
		\end{split}
		\end{equation}
		where $\tau_{h}$ is the translation operator.\footnote{For $h\in\mathbb{R}$  the translation operator $\tau_{h}$ is defined as $\left(\tau_{h}f\right)(x)=f(x-h).$}
		
		Moreover, the last expression in (\ref{gp5}) defines a tempered distribution for $s$ in a suitable class, that will be specified later.  Indeed, for $z\in \mathbb{C}$ with $-1<\mathrm{Re}(z)<0$  
		\begin{equation}\label{gp1}
		\begin{split}
		\frac{1}{(2\pi)^{1/2}}\int_{\mathbb{R}}|\xi|^{z}\widehat{\left(\tau_{-x}\partial_{x}^{m}\varphi\right)}(\xi)\,\mathrm{d}\xi&=c(z)\int_{\mathbb{R}}\frac{\left(\tau_{-x}\partial_{x}^{m}\varphi\right)(y)}{|y|^{1+z}}\,\mathrm{d}y,\quad \forall \varphi\in \mathcal{S}(\mathbb{R})
		\end{split}
		\end{equation}
		with $c(z)$  is independent of $\varphi.$ In fact, evaluating  $\varphi(x)=\ex^{-x^{2}/2}$ in (\ref{gp1})
		yields
		\begin{equation*}
		c(z)=\frac{2^{z}\,\Gamma\left(\frac{z+1}{2}\right)}{\pi^{1/2}\Gamma\left(-\frac{z}{2}\right)}.
		\end{equation*}  		
		Thus, for every  $\varphi\in \mathcal{S}(\mathbb{R})$ the right hand side in (\ref{gp1}) defines a meromorphic function for every test function, which can be extended  analytically to a wider range of complex numbers z's, specifically   $z$ with $\mathrm{Im}(z)=0$ and $\mathrm{Re}(z)=s>0$ that is the case that attains us. By an abuse of notation, we will denote the meromorphic extension and the original as the same.
		
	 Thus,  combining  \eqref{cond1}, \eqref{gp5} and  \eqref{gp1}   it  follows that
	  \begin{equation*}
	  \begin{split}
	  g(x)\left(D_{x}^{s}\partial_{x}^{m}f\right)(x)&=c(s)\int_{\mathbb{R}}\frac{g(x)\left(\tau_{-x}\partial_{x}^{m}f\right)(y)}{|y|^{1+s}}\,\mathrm{d}y\\
&=c(s)g(x)\left(f*\frac{\mathbb{1}_{\{|y|\geq \delta\}}}{|y|^{s+m+1}}\right)(x)
	  	  \end{split}
	 	 \end{equation*}   
	Notice that the kernel in the integral expression is not anymore singular  due to the condition (\ref{cond1}). In fact, in the particular case that $m$ is even, we obtain after apply  integration by parts 
		 \begin{equation*}
	\begin{split}
 g(x)\left(D_{x}^{s}\partial_{x}^{m}f\right)(x)& 
=c(s,m)g(x)\left(f*\frac{\mathbb{1}_{\{|y|\geq \delta\}}}{|y|^{s+m+1}}\right)(x)
	\end{split}
	\end{equation*} 
	and in the case $m$ being odd
	\begin{equation*}
	\begin{split}
	g(x)\left(D_{x}^{s}\partial_{x}^{m}f\right)(x) 
	&=c(s,m)g(x)\left(f*\frac{y\mathbb{1}_{\{|y|\geq \delta\}}}{|y|^{s+m+2}}\right)(x).
	\end{split}
	\end{equation*} 
	 Finally,  in both cases combining  Young's inequality and H\"{o}lder's inequality one gets  
		\begin{equation*}
		\begin{split}
		\|g\,\partial_{x}^{m}D_{x}^{s}f\|_{L^{2}}
		&\lesssim \|g\|_{L^{p}}\| f\|_{L^{2}}  \left\|\frac{\mathbb{1}_{\{|y|\geq \delta\}}}{|\cdot|^{s+m+1}}\right\|_{L^{r}}\\
		&\lesssim \|g\|_{L^{p}}\|f\|_{L^{2}}
		\end{split}
		\end{equation*}
		where the index $p$  satisfies $\frac{1}{2}=\frac{1}{p}+\frac{1}{r},$\,   
		which clearly implies  $p\in [2,\infty],$ as was required. 
	\end{proof}
Further, in the paper  we will use extensively some results about commutator additionally to those presented in previous section. 
Next,  we  will study the smoothing effect for solutions of the dispersive generalized Benjamin-Ono equation \eqref{eq7} following Kato's ideas \cite{KATO1}.
	\subsection{Commutator Expansions }
		In this section  we present  several new main tools obtained by Ginibre and Velo \cite{{GV1}}, \cite{GV2}   which  will be the cornerstone  in the proof of Theorem \ref{A}. They include  commutator expansions together with their estimates. The basic problem is to handle  the non-local operator $D^{s}$ for non-integer $s$ and in particular  to obtain representations  of its commutator with multiplication operators by functions that exhibit as much locality as possible. 
	
Let $a=2\mu+1>1,$   let  $n$ be a non-negative integer and $h$ be a smooth  function with suitable  decay at infinity, for instance with $h'\in C^{\infty}_{0}(\mathbb{R}).$ 
	
	We define the operator 
	\begin{equation}\label{eq8}
	R_{n}(a)=\left[H D^{a}; h\right]-\frac{1}{2}\left(P_{n}(a)-H P_{n}(a)H\right),
	\end{equation}
	\begin{equation}\label{eq105}
	P_{n}(a)=a\sum_{0\leq j\leq n}c_{2j+1}(-1)^{j}4^{-j}D^{\mu-j}\left(h^{(2j+1)}D^{\mu-j}\right)
		\end{equation}
	where
	\begin{equation*}
	c_{1}=1,\quad c_{2j+1}=\frac{1}{(2j+1)!}\prod_{0\leq k<j}\left(a^{2}-(2k+1)^{2}\right)\quad\mbox{and}\quad H=-\mathcal{H}.
	\end{equation*}
 	It was shown in \cite{GV1} that the operator $R_{n}(a)$ can be represented in terms of anti-commutators \footnote{For any two  operators $P$ and $Q$ we denote the anti-commutator by $[P;Q]_{+}=PQ+QP.$}   as follows
	\begin{equation}\label{aj1}
	R_{n}(a)=\frac{1}{2}([H; Q_{n}(a)]_{+}+[D^{a};[H; h]]_{+}),
	\end{equation}
	where  the operator $Q_{n}(a)$ is represented in the Fourier space variables by the integral kernel
	\begin{equation}\label{aj2}
	Q_{n}(a)\longrightarrow (2\pi)^{\frac{1}{2}}
\widehat{h}(\xi-\xi')|\xi\xi'|^{\frac{a}{2}}2aq_{n}(a,t),
	\end{equation}
		with $|\xi|=|\xi'|\ex^{2t}$  and 
	\begin{equation}
	q_{n}(a,t)=\frac{1}{a}(a^{2}-(2n+1)^{2})c_{2n+1}\int_{0}^{t} \sinh^{2n+1}\tau\,\sinh((a(t-\tau)))\,\mathrm{d}\tau.
	\end{equation}
%
Based on  \eqref{aj1} and \eqref{aj2}, Ginibre and Velo \cite{GV2}
obtain the following properties of boundedness and compactness of the operator $R_{n}(a).$
	 	\begin{prop}\label{propo2}
		Let $n$  be a non-negative  integer,   $a\geq 1,\,$ and   $ \sigma\geq 0,$  be such that 
		\begin{equation}\label{eq21}
		2n+1\leq a+2\sigma\leq2n+3.
		\end{equation}
			Then 
		\begin{itemize}
			\item[(a)] The operator $D^{\sigma}R_{n}(a)D^{\sigma}$ is bounded in $L^{2}$ with norm 
			\begin{equation}\label{eq98}
			\left\|D^{\sigma}R_{n}(a)D^{\sigma}f\right\|_{L^{2}}\leq C(2\pi)^{-1/2}\left\|\widehat{(D^{a+2\sigma}h)}\right\|_{L^{1}_{\xi}}\|f\|_{L^{2}}.
			\end{equation}	
			If $a\geq 2n+1,$ one can take  $C=1.$
			\item[(b)] Assume in addition  that
			\begin{equation*}
			2n+1\leq a+2\sigma<2n+3.
			\end{equation*}
			Then the operator ${\displaystyle D^{\sigma}R_{n}(a)D^{\sigma}}$ is compact in $L^{2}(\mathbb{R}).$
					\end{itemize}
	\end{prop}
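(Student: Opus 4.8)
The plan is to derive both statements from the anti-commutator representation \eqref{aj1}, reducing everything to $L^{2}$-bounds for explicit integral operators in Fourier space via Schur's test. Since $H$ is a Fourier multiplier with symbol of modulus one, it is an $L^{2}$-isometry and commutes with $D^{\sigma}$; hence
\[
D^{\sigma}[H;Q_{n}(a)]_{+}D^{\sigma}=H\,(D^{\sigma}Q_{n}(a)D^{\sigma})+(D^{\sigma}Q_{n}(a)D^{\sigma})\,H,
\]
so the first term of \eqref{aj1} is controlled by $2\|D^{\sigma}Q_{n}(a)D^{\sigma}\|_{L^{2}\to L^{2}}$. It therefore suffices to estimate $D^{\sigma}Q_{n}(a)D^{\sigma}$ together with the second term $D^{\sigma}[D^{a};[H;h]]_{+}D^{\sigma}$.

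First I would compute the Fourier-side kernel of $D^{\sigma}Q_{n}(a)D^{\sigma}$. By \eqref{aj2} it equals, up to the constant $(2\pi)^{1/2}2a$,
\[
K(\xi,\xi')=\widehat{h}(\xi-\xi')\,|\xi\xi'|^{a/2+\sigma}\,q_{n}\bigl(a,\tfrac{1}{2}\log(|\xi|/|\xi'|)\bigr).
\]
The heart of the matter is the pointwise bound $|\xi\xi'|^{a/2+\sigma}|q_{n}(a,t)|\le C\,|\xi-\xi'|^{a+2\sigma}$. Writing $|\xi|=|\xi'|e^{2t}$ and $b=a+2\sigma$, for same-sign frequencies this reduces to the uniform boundedness in $t$ of $e^{bt}|q_{n}(a,t)|/|e^{2t}-1|^{b}$. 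Two regimes govern it: as $t\to0$ one has $q_{n}(a,t)\sim c\,t^{2n+3}$ while $|e^{2t}-1|^{b}\sim(2|t|)^{b}$, forcing $b\le 2n+3$; as $t\to\pm\infty$ the integral defining $q_{n}$ grows like $e^{\max(a,2n+1)|t|}$, and matching it against the $e^{-2b|t|}$ from the denominator forces $b\ge\max(a,2n+1)$, i.e.\ (since $\sigma\ge0$ already gives $b\ge a$) exactly $b\ge 2n+1$. These are precisely the hypotheses \eqref{eq21}. The opposite-sign case, where $|\xi-\xi'|=|\xi|+|\xi'|$, is strictly better and has no small-$t$ singularity. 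With the pointwise bound in hand, $|K(\xi,\xi')|\lesssim|\xi-\xi'|^{a+2\sigma}|\widehat{h}(\xi-\xi')|$, and Schur's test — integrating in $\eta=\xi-\xi'$ — yields $\|D^{\sigma}Q_{n}(a)D^{\sigma}\|_{L^{2}\to L^{2}}\lesssim\int|\eta|^{a+2\sigma}|\widehat{h}(\eta)|\,\mathrm{d}\eta=\|\widehat{D^{a+2\sigma}h}\|_{L^{1}}$, which is the content of \eqref{eq98}.

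For the second term, the Fourier kernel of $[H;h]$ is proportional to $\widehat{h}(\xi-\xi')(\sgn\xi-\sgn\xi')$, supported on opposite-sign pairs where $|\xi-\xi'|=|\xi|+|\xi'|$. Consequently the kernel of $D^{\sigma}[D^{a};[H;h]]_{+}D^{\sigma}$ is dominated by $\bigl(|\xi|^{a+\sigma}|\xi'|^{\sigma}+|\xi|^{\sigma}|\xi'|^{a+\sigma}\bigr)|\widehat{h}(\xi-\xi')|$, and each product is $\le|\xi-\xi'|^{a+2\sigma}$ on its support; Schur's test again gives the same bound, so no new constraint arises. The refinement $C=1$ for $a\ge 2n+1$ comes from the fact that in that regime the large-$t$ analysis produces genuine exponential decay (and at the endpoint $a=2n+1$ the prefactor $a^{2}-(2n+1)^{2}$ makes $q_{n}\equiv0$, so $Q_{n}(a)=0$ altogether), which lets the constant be taken sharply.

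Finally, for the compactness in (b), the strict inequality $a+2\sigma<2n+3$ gives $K(\xi,\xi')\to0$ as $\xi\to\xi'$, that is, genuine rather than borderline decay of the kernel near the diagonal. I would exploit this by truncating to the frequency region $\{N^{-1}\le|\xi|,|\xi'|\le N\}$: the truncated operators have square-integrable kernels, hence are Hilbert--Schmidt and compact, while the strict decay forces the truncation error to tend to $0$ in operator norm. Thus $D^{\sigma}R_{n}(a)D^{\sigma}$ is a norm-limit of compact operators and is itself compact. The main obstacle throughout is the key pointwise kernel bound — controlling the explicit integral $q_{n}(a,t)$ uniformly across all scales $t$ and verifying that its small-$t$ vanishing and large-$t$ growth are matched exactly by the two endpoints of \eqref{eq21}; once this is secured, the operator-theoretic conclusions follow routinely.
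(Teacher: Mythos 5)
The paper does not actually prove this proposition: it is quoted verbatim from Ginibre--Velo and the ``proof'' is the citation ``See Proposition 2.2 in \cite{GV2}.'' What you have written is essentially a reconstruction of the argument in that reference: reduce to the two anti-commutators in \eqref{aj1} using that $\mathcal{H}$ is an $L^{2}$-isometry commuting with $D^{\sigma}$, read off the Fourier kernel of $D^{\sigma}Q_{n}(a)D^{\sigma}$ from \eqref{aj2}, prove the pointwise bound $|\xi\xi'|^{(a+2\sigma)/2}|q_{n}(a,t)|\lesssim|\xi-\xi'|^{a+2\sigma}$ by matching the $t\to 0$ vanishing $q_{n}\sim ct^{2n+3}$ against $a+2\sigma\leq 2n+3$ and the $|t|\to\infty$ growth $e^{\max(a,2n+1)|t|}$ against $a+2\sigma\geq 2n+1$, and conclude by Schur's test; the off-diagonal-sign term $[D^{a};[\mathcal{H};h]]_{+}$ is handled by $|\xi|,|\xi'|\leq|\xi-\xi'|$ on its support, and compactness follows from the strict inequality via Hilbert--Schmidt truncation. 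This is the correct mechanism, and the two endpoints of \eqref{eq21} emerge exactly where you say they do.

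Two points would need a little more care in a complete write-up. First, the assertion that one can take $C=1$ when $a\geq 2n+1$ is a sharp quantitative statement; your observation that $q_{n}\equiv 0$ at the endpoint $a=2n+1$ is correct, but for $a>2n+1$ the claim requires an exact evaluation of $\sup_{t}e^{bt}|q_{n}(a,t)|/|e^{2t}-1|^{b}$ rather than mere boundedness. Second, since $h$ is only assumed to satisfy $h'\in C^{\infty}_{0}$, $\widehat{h}$ is a distribution containing a multiple of $\delta_{0}$ and a $\mathrm{p.v.}\,\eta^{-1}$ singularity; the delta corresponds to adding a constant to $h$, which drops out of every commutator, and the $\eta^{-1}$ is absorbed by the factor $|\eta|^{a+2\sigma}$ with $a+2\sigma\geq 1$, so your Schur integral $\int|\eta|^{a+2\sigma}|\widehat{h}(\eta)|\,\mathrm{d}\eta=\|\widehat{D^{a+2\sigma}h}\|_{L^{1}}$ is indeed finite, but this should be said explicitly before applying Schur's test and before claiming square-integrability of the truncated kernels near the diagonal.
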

	\begin{proof}
		See  Proposition 2.2 in \cite{GV2}.
	\end{proof}
		In fact the  Proposition \ref{propo2} is a generalization of a previous result, where  the derivatives of operator $R_{n}(a)$  are not considered (cf.  Proposition 1 in \cite{GV1}). 
	
		The estimative (\ref{eq98}) yields  the following identity of localization of derivatives.
	\begin{lem}\label{prop1}
		Assume $0<\alpha<1.$  Let be  $\varphi\in C^{\infty}(\mathbb{R})$   with $\varphi'\in C^{\infty}_{0}(\mathbb{R}).$ 
		
		Then,
		\begin{equation}\label{eq1}
		\begin{split}
		\int_{\mathbb{R}} \varphi f\, D^{\alpha+1}\partial_{x}f\,\mathrm{d}x&= \left(\frac{\alpha+2}{4}\right)\int_{\mathbb{R}} \left(\left|D^{\frac{\alpha+1}{2}}f\right|^{2}+\left|D^{\frac{\alpha+1}{2}}\mathcal{H}f\right|^{2}\right)\varphi' \,\mathrm{d}x\\
		&\quad+\frac{1}{2}\int_{\mathbb{R}}fR_{0}(\alpha+2)f\,\mathrm{d}x.
		\end{split}
		\end{equation}
	\end{lem}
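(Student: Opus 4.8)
The plan is to put the dispersive operator into its polar (Hilbert-transform) form, symmetrize the weighted quadratic form into a commutator of $HD^{a}$ with multiplication by $\varphi$, and then recognize that commutator as exactly the Ginibre--Velo object: the principal part $P_0(a)$ produces the two local smoothing terms, while the leftover is precisely $R_0(a)$. Throughout I set $a=\alpha+2$ and $\mu=\tfrac{\alpha+1}{2}$, so that $a=2\mu+1$ and $D^{\mu}=D^{\frac{\alpha+1}{2}}$, and I take $h=\varphi$ in the definitions \eqref{eq8}--\eqref{eq105}. It is enough to establish \eqref{eq1} for $f\in\mathcal S(\mathbb R)$ (equivalently $f\in H^{\infty}$, the regularity available when the lemma is applied to smooth solutions) and then extend by density, all the terms being continuous in $f$.

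First I would rewrite the dispersive term. Since $\mathcal H\partial_x=D$ and hence $\partial_x=-\mathcal H D$, one has
\[
D^{\alpha+1}\partial_x=-\mathcal H D^{\alpha+2}=HD^{a},\qquad H=-\mathcal H,
\]
and the Fourier symbol $i\,\mathrm{sgn}(\xi)\,|\xi|^{a}$ of $A:=HD^{a}$ is purely imaginary and odd, so $A$ is skew-adjoint on $L^{2}$. Combining skew-adjointness of $A$ with self-adjointness of multiplication by $\varphi$, the antisymmetric part of $M_\varphi A$ contributes nothing to a real quadratic form, and the integral symmetrizes into a commutator:
\[
\int_{\mathbb R}\varphi f\,D^{\alpha+1}\partial_x f\,\mathrm{d}x=\tfrac12\int_{\mathbb R}f\,\big[M_\varphi,\,HD^{a}\big]f\,\mathrm{d}x .
\]
Up to the sign fixed by the commutator convention, the operator on the right is exactly $[HD^{a};h]$ of \eqref{eq8}.

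Next I would insert the Ginibre--Velo expansion. By the very definition \eqref{eq8} with $n=0$,
\[
[HD^{a};h]=R_0(a)+\tfrac12\big(P_0(a)-HP_0(a)H\big),
\]
so the form splits into the remainder $\tfrac12\int f\,R_0(a)f$ and the principal part coming from $P_0(a)=a\,D^{\mu}\big(h'\,D^{\mu}\,\cdot\,\big)$ (recall $c_1=1$). The principal quadratic form is then computed explicitly: since $D^{\mu}$ is self-adjoint and $H=-\mathcal H$ is skew-adjoint with $D^{\mu}H=-\mathcal H D^{\mu}$,
\[
\langle f,P_0(a)f\rangle=a\int\varphi'\,\big|D^{\mu}f\big|^{2}\,\mathrm{d}x,\qquad
\langle f,HP_0(a)Hf\rangle=-\,a\int\varphi'\,\big|D^{\mu}\mathcal H f\big|^{2}\,\mathrm{d}x,
\]
so that $\langle f,(P_0(a)-HP_0(a)H)f\rangle=a\int\varphi'\big(|D^{\mu}f|^{2}+|D^{\mu}\mathcal H f|^{2}\big)\mathrm{d}x$. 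Collecting the two pieces and recalling $a=\alpha+2$, $\mu=\tfrac{\alpha+1}{2}$ gives \eqref{eq1}, the coefficient $\tfrac{\alpha+2}{4}$ arising as the factor $a$ from $P_0(a)$ times the $\tfrac12\cdot\tfrac12$ from the symmetrization and from the $\tfrac12$ in \eqref{eq8}.

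The step I expect to be the main obstacle is not any single hard estimate but the careful sign bookkeeping together with the rigorous justification of the formal pairings. One must track the convention $H=-\mathcal H$, the (anti)commutator signs in \eqref{eq8}--\eqref{aj1}, and in particular the two sign flips produced by the skew-adjoint $H$ in $HP_0(a)H$, which together yield the stated positive coefficient and the $+\tfrac12\int f\,R_0(a)f$ remainder. To make every pairing legitimate I would first restrict to $f\in\mathcal S(\mathbb R)$, use $\varphi'\in C_0^{\infty}$ to keep the principal terms compactly supported, and invoke the $L^{2}$-boundedness of $R_0(\alpha+2)$ furnished by Proposition \ref{propo2} with $n=0$, $\sigma=0$ (its hypothesis \eqref{eq21} reads $1\le\alpha+2\le 3$, which holds for $0<\alpha<1$); only then do I pass to general $f$ by density.
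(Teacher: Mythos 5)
Your strategy is exactly the right one, and it is the one the paper implicitly delegates to the reference (the paper's own ``proof'' is a one-line citation of Proposition 2.12 in \cite{LIPICLA}): write $D^{\alpha+1}\partial_{x}=-\mathcal{H}D^{\alpha+2}=HD^{a}$, symmetrize the weighted form using skew-adjointness, insert the Ginibre--Velo decomposition \eqref{eq8} with $n=0$, and compute the $P_{0}(a)$ pairings. Your evaluation of the principal part is correct: $\langle f,P_{0}(a)f\rangle=a\int\varphi'|D^{\mu}f|^{2}$ and $\langle f,HP_{0}(a)Hf\rangle=-a\int\varphi'|D^{\mu}\mathcal{H}f|^{2}$ with $\mu=\frac{\alpha+1}{2}$, and the appeal to Proposition \ref{propo2} with $n=\sigma=0$ (condition $1\le\alpha+2\le3$) to make sense of the remainder is the right justification.

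The one step that does not close is precisely the one you flag as the crux and then wave through: the sign. Your symmetrization correctly gives $\int\varphi f\,HD^{a}f\,\mathrm{d}x=\frac12\langle f,[M_{\varphi},HD^{a}]f\rangle$, but $[M_{\varphi},HD^{a}]=-[HD^{a};\varphi]$, so inserting \eqref{eq8} yields
\begin{equation*}
\int_{\mathbb{R}}\varphi f\,D^{\alpha+1}\partial_{x}f\,\mathrm{d}x=-\tfrac12\langle f,R_{0}(a)f\rangle-\tfrac{a}{4}\int_{\mathbb{R}}\varphi'\left(\left|D^{\mu}f\right|^{2}+\left|D^{\mu}\mathcal{H}f\right|^{2}\right)\mathrm{d}x,
\end{equation*}
i.e.\ the \emph{negative} of the right-hand side of \eqref{eq1}; there is no compensating sign anywhere else in your chain. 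Two independent checks confirm this is not a further hidden convention: (i) at $\alpha=1$ one has $D^{2}\partial_{x}=-\partial_{x}^{3}$ and a direct integration by parts gives $\int\varphi f\,D^{2}\partial_{x}f=-\frac32\int\varphi'(\partial_{x}f)^{2}+\frac12\int\varphi'''f^{2}$, with a negative leading coefficient; (ii) in the paper's own energy estimate the decomposition is applied to $A_{2}(t)=-\int(\partial_{x}D_{x}^{\alpha+1}\partial_{x}u)\,\partial_{x}u\,\chi_{\epsilon,b}^{2}\,\mathrm{d}x=\frac12\int\partial_{x}u\,[D_{x}^{\alpha+1}\partial_{x};\chi_{\epsilon,b}^{2}]\partial_{x}u\,\mathrm{d}x$, where the minus sign is present on the left and the smoothing terms then come out positive. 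So what your (correctly completed) argument proves is the identity with $-\int\varphi f\,D^{\alpha+1}\partial_{x}f$ on the left --- which is the version actually used in Section \ref{ppgrg} --- and the statement \eqref{eq1} as printed carries the opposite sign. You should either prove that corrected statement explicitly or track the sign through $[M_{\varphi},HD^{a}]=-[HD^{a};\varphi]$ rather than asserting that the bookkeeping ``yields the stated positive coefficient.''
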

	\begin{proof}
		The proof follows  the ideas presented in  Proposition 2.12 in \cite{LIPICLA}.
	\end{proof}
	%
	%
	%
\section{ The Linear Problem.}
The aim of this section is  to obtain Strichartz estimates associated to solutions of the IVP (\ref{eq7}).
 
 First,  consider the linear problem
 \begin{equation}\label{HoP}
 \left\{
 \begin{array}{ll}
 \partial_{t}u-D_{x}^{\alpha+1}\partial_{x}u=0, & x,t\in\mathbb{R},\,0<\alpha<1, \\
 u(x,0)=u_{0}(x),&  \\
 \end{array} 
 \right.
 \end{equation}
 whose solution is given by  
 \begin{equation}\label{solh}
 u(x,t)=S(t)u_{0}=\left(\ex^{it\,|\xi|^{\alpha+1}\xi}\widehat{u_{0}}\right){\widecheck{\hspace{0,5mm}}}.
 \end{equation}  
 We begin studying estimates of the unitary group obtained in (\ref{solh}).
\begin{prop}
 Assume that   $0<\alpha<1.$  Let $q,p$ satisfy  $ \frac{2}{q}+\frac{1}{p}=\frac{1}{2} $
      with 
 $2\leq p\leq \infty.$
 
  Then 
  \begin{equation}\label{strichartz}
  \left\| D_{x}^{\frac{\alpha}{q}}S(t)u_{0}\right\|_{L^{q}_{t}L^{p}_{x}}\lesssim \|u_{0}\|_{L^{2}_{x}}\end{equation}
for all $u_{0}\in L^{2}(\mathbb{R}).$
\end{prop}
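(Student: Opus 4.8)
The plan is to prove \eqref{strichartz} by the classical combination of a frequency-localized dispersive (decay) estimate, obtained from the oscillatory-integral bounds of Kenig, Ponce and Vega \cite{KPV4}, together with the abstract $TT^{*}$ / Hardy--Littlewood--Sobolev machinery; the gain of $\alpha/q$ derivatives will be produced by tracking the frequency dependence of the decay rate through a Littlewood--Paley decomposition.

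First I would frequency-localize. Writing $P_{N}$ for a Littlewood--Paley projection onto $|\xi|\sim N$ (dyadic $N$), observe that the phase of the multiplier of $S(t)$ in \eqref{solh} is $w(\xi)=|\xi|^{\alpha+1}\xi=\sgn(\xi)|\xi|^{\alpha+2}$, which is homogeneous of degree $\alpha+2$ and satisfies $w''(\xi)\sim|\xi|^{\alpha}$ away from the origin. On the piece $|\xi|\sim N$ the kernel of $S(t)P_{N}$ is $K^{N}_{t}(x)=\int \chi(\xi/N)\,\ex^{i(x\xi+tw(\xi))}\,\mathrm{d}\xi$; after rescaling $\xi=N\eta$ the phase becomes $xN\eta+tN^{\alpha+2}w(\eta)$ with $w''\sim 1$ on the support of $\chi$, so van der Corput's lemma gives $|K^{N}_{t}(x)|\lesssim N\,(|t|N^{\alpha+2})^{-1/2}=N^{-\alpha/2}|t|^{-1/2}$ uniformly in $x$. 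By Young's inequality this yields the dispersive estimate
\begin{equation*}
\|S(t)P_{N}f\|_{L^{\infty}_{x}}\lesssim N^{-\alpha/2}|t|^{-1/2}\|f\|_{L^{1}_{x}},
\end{equation*}
which together with the $L^{2}$-isometry $\|S(t)P_{N}f\|_{L^{2}_{x}}=\|P_{N}f\|_{L^{2}_{x}}$ interpolates to $\|S(t)P_{N}f\|_{L^{p}_{x}}\lesssim (N^{-\alpha/2}|t|^{-1/2})^{1-2/p}\|f\|_{L^{p'}_{x}}$. The crucial point is that the dyadic decomposition simultaneously isolates the non-smoothness of $w$ at $\xi=0$ and exposes the factor $N^{-\alpha/2}$.

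Next I would run the standard $TT^{*}$ argument at each dyadic level. Since $P_{N}$ commutes with $S(t)$ and $P_{N}^{2}=P_{N}$, one has $S(t)P_{N}\,(S(s)P_{N})^{*}=S(t-s)P_{N}$, so $TT^{*}$ reduces to the convolution-in-time operator with kernel bounded by $(N^{-\alpha/2})^{1-2/p}|t-s|^{-\frac12(1-2/p)}$. Because the admissibility relation $\frac{2}{q}+\frac1p=\frac12$ forces the time exponent $\tfrac12(1-2/p)=\tfrac2q\in(0,1)$, the Hardy--Littlewood--Sobolev inequality applies and gives
\begin{equation*}
\|S(t)P_{N}u_{0}\|_{L^{q}_{t}L^{p}_{x}}\lesssim (N^{-\alpha/2})^{(1-2/p)/2}\|P_{N}u_{0}\|_{L^{2}}=N^{-\alpha/q}\|P_{N}u_{0}\|_{L^{2}},
\end{equation*}
using $(1-2/p)/2=2/q$; equivalently $\|D_{x}^{\alpha/q}S(t)P_{N}u_{0}\|_{L^{q}_{t}L^{p}_{x}}\lesssim \|P_{N}u_{0}\|_{L^{2}}$. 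Finally I would sum over $N$: for $2\le p<\infty$ the Littlewood--Paley square-function estimate followed by Minkowski's inequality (valid since $p,q\ge 2$) gives $\|D_{x}^{\alpha/q}S(t)u_{0}\|_{L^{q}_{t}L^{p}_{x}}\lesssim (\sum_{N}\|P_{N}u_{0}\|_{L^{2}}^{2})^{1/2}\sim\|u_{0}\|_{L^{2}}$, which is \eqref{strichartz}.

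I expect the main obstacle to be the dispersive estimate itself — extracting the sharp power $N^{-\alpha/2}|t|^{-1/2}$ from the oscillatory integral uniformly in $x$, which requires the van der Corput bound with the lower bound $w''(\xi)\sim|\xi|^{\alpha}$ together with control of the total variation of the cutoff; this is precisely where the results of \cite{KPV4} enter, and it is also where the homogeneity degree $\alpha+2$ of the symbol determines the exact derivative gain. A secondary technical point is the frequency summation at the corner $p=\infty$, $q=4$, where the square-function argument fails: since $q=2$ never occurs there is no Keel--Tao endpoint difficulty, and this case can instead be recovered directly from the dyadic bound using the almost-orthogonality of the pieces $S(t)P_{N}u_{0}$ together with a Bernstein inequality, the $N^{-\alpha/q}$ gain ensuring summability in $N$.
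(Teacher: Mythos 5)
The paper does not prove this proposition at all: it is quoted as a direct application of Theorem 2.1 in \cite{KPV4}, so what you have written is essentially a reconstruction of the proof of that cited theorem. For $2\le p<\infty$ your reconstruction is correct and complete in outline: the rescaled van der Corput bound $|K^{N}_{t}(x)|\lesssim N^{-\alpha/2}|t|^{-1/2}$ is right (the phase $|\xi|^{\alpha+1}\xi$ has $w''\sim|\xi|^{\alpha}$, and $N\cdot(|t|N^{\alpha+2})^{-1/2}=N^{-\alpha/2}|t|^{-1/2}$), the admissibility relation does give the HLS exponent $2/q=\tfrac12(1-2/p)$, the bookkeeping $(N^{-\alpha/2})^{(1-2/p)/2}=N^{-\alpha/q}$ is correct, and the square-function summation closes the argument.

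The genuine gap is the endpoint $p=\infty$, $q=4$, which the statement includes and which the paper's subsequent Remark singles out as the interesting extremal case. Your proposed repair — almost-orthogonality of the pieces plus Bernstein, ``the $N^{-\alpha/q}$ gain ensuring summability in $N$'' — does not work: that factor $N^{-\alpha/q}$ is exactly consumed in converting $\|S(t)P_{N}u_{0}\|_{L^{4}_{t}L^{\infty}_{x}}\lesssim N^{-\alpha/4}\|P_{N}u_{0}\|_{L^{2}}$ into the stated bound for $D_{x}^{\alpha/4}S(t)P_{N}u_{0}$, so after placing the derivative on the left there is no leftover decay in $N$, and the naive triangle inequality only controls the sum by $\sum_{N}\|P_{N}u_{0}\|_{L^{2}}$, a Besov $B^{0}_{2,1}$ norm that is not bounded by $\|u_{0}\|_{L^{2}}$. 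Nor is there a usable notion of almost-orthogonality in $L^{4}_{t}L^{\infty}_{x}$. The way \cite{KPV4} handles this is to prove the \emph{unlocalized} kernel bound $\sup_{x}\bigl|\int \ex^{i(t|\xi|^{\alpha+1}\xi+x\xi)}|\xi|^{\alpha/2}\,\mathrm{d}\xi\bigr|\lesssim |t|^{-1/2}$, which gives $\|D_{x}^{\alpha/2}S(t)f\|_{L^{\infty}_{x}}\lesssim|t|^{-1/2}\|f\|_{L^{1}_{x}}$ directly and then runs $TT^{*}$ without any frequency summation. That global bound is not obtained by summing your dyadic estimates termwise (each block contributes $\lesssim|t|^{-1/2}$ and the sum diverges); one must localize around the single stationary-frequency scale $|\xi|^{\alpha+1}\sim|x|/|t|$ and use non-stationary phase on the remaining blocks. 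Either supply that argument for $p=\infty$, or restrict the statement to $2\le p<\infty$, which is all that is actually used in the proof of Proposition \ref{eq50}.
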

\begin{proof}
The proof follows as an application  on Theorem 2.1 in \cite{KPV4}.	
\end{proof}
\begin{rem}
	Notice that the  condition  in $p$ implies $q\in[4,\infty],$ which in one of the extremal cases $(p,q)=(\infty, 4)$
 yields 
\begin{equation*}
 \left\| D_{x}^{\frac{\alpha}{4}}S(t)u_{0}\right\|_{L^{4}_{t}L^{\infty}_{x}}\lesssim \|u_{0}\|_{L^{2}_{x}}
\end{equation*}	
	which shows the gain of $\frac{\alpha}{4}$ derivatives globally in time for solutions of (\ref{HoP}).
\end{rem}
\begin{lem}
	Assume  that $0<\alpha<1.$ Let $\psi_{k}$ be a $C^{\infty}(\mathbb{R})$ function  supported  in the interval $[2^{k-1},2^{k+1}]$  where $k\in \mathbb{Z}^{+}.$  Then, the function $H^{\alpha}_{k}$ defined as 
		\begin{equation*}
	H^{\alpha}_{k}(x)= 
	\left\{ \begin{array}{lcc}
	2^{k} &   if  & |x| \leq 1, \\
	2^{\frac{k}{2}}|x|^{-\frac{1}{2}}& if & 1\leq |x|\leq c 2^{k(\alpha+1)}, \\
	\left(1+x^{2}\right)^{-1} &  if &   |x|>c2^{k(\alpha +1)}
	\end{array}
	\right.
	\end{equation*}
	satisfies 
	\begin{equation}\label{eq18.2}
	\left|\int_{-\infty}^{\infty}\ex^{i\left(t\xi|\xi|^{\alpha+1}+x\xi\right)}\psi_{k}(\xi)\mathrm{d}\xi\right|\lesssim	H^{\alpha}_{k}(x) 
	\end{equation}
for $|t|\leq 2,$ where the constant $c$ does not depends on $t$ nor $k.$
	
	Moreover, we have that
	\begin{equation}\label{eqA.1.1.1}
	\sum_{l=-\infty}^{\infty} H^{\alpha}_{k}\left(|l|\right)\lesssim 2^{k\left(\frac{\alpha+1}{2}\right)}.
	\end{equation}
\end{lem}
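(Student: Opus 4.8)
The plan is to establish the pointwise bound \eqref{eq18.2} by a stationary--phase analysis of the oscillatory integral
\[
I_{k}(x,t)=\int_{-\infty}^{\infty}\ex^{i\left(t\xi|\xi|^{\alpha+1}+x\xi\right)}\psi_{k}(\xi)\,\mathrm{d}\xi,
\]
and then to deduce \eqref{eqA.1.1.1} by summing the resulting three--piece majorant over $\mathbb{Z}$. Since $\supp\psi_{k}\subset[2^{k-1},2^{k+1}]\subset(0,\infty)$, on the support one has $\xi|\xi|^{\alpha+1}=\xi^{\alpha+2}$, so the phase is $\phi(\xi)=t\xi^{\alpha+2}+x\xi$ with $\phi'(\xi)=(\alpha+2)t\xi^{\alpha+1}+x$ and $\phi''(\xi)=(\alpha+1)(\alpha+2)t\xi^{\alpha}$. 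On the support $\xi\sim2^{k}$, so $|\phi''(\xi)|\sim|t|2^{k\alpha}$, while $\phi'$ has a zero in the support only when $\xi^{\alpha+1}\sim|x|/|t|$, i.e.\ at frequency $2^{k}$ precisely when $|x|\sim|t|2^{k(\alpha+1)}$. This identifies $2^{k(\alpha+1)}$ as the critical spatial scale separating the stationary from the non--stationary regime and dictates the three cases appearing in the definition of $H^{\alpha}_{k}$.

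First, for $|x|\le1$ I would use the trivial bound $|I_{k}(x,t)|\le\|\psi_{k}\|_{L^{1}}\lesssim2^{k}$; this is unavoidable because as $t\to0$ the integral tends to $\int\psi_{k}(\xi)\ex^{ix\xi}\,\mathrm{d}\xi$, which at $x=0$ equals $\int_{\mathbb{R}}\psi_{k}\sim2^{k}$, and van der Corput degenerates. For the middle range $1\le|x|\le c\,2^{k(\alpha+1)}$ I would split according to the size of $t$. If $|t|\gtrsim|x|2^{-k(\alpha+1)}$ then $|\phi''|\sim|t|2^{k\alpha}\gtrsim|x|2^{-k}$, and since $\phi''$ has constant sign on the support (so $\phi'$ is monotone) the second--derivative van der Corput lemma, with $\|\psi_{k}\|_{L^{\infty}}+\|\psi_{k}'\|_{L^{1}}\lesssim1$, gives $|I_{k}|\lesssim(|x|2^{-k})^{-1/2}=2^{k/2}|x|^{-1/2}$. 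If instead $|t|\ll|x|2^{-k(\alpha+1)}$ then the term $x$ dominates $\phi'$, so $|\phi'|\gtrsim|x|$ on the support and one integration by parts gives $|I_{k}|\lesssim|x|^{-1}\le2^{k/2}|x|^{-1/2}$ (using $|x|\ge1\ge2^{-k}$). Finally, for $|x|>c\,2^{k(\alpha+1)}$, choosing $c$ large relative to the $\alpha$--dependent constants and using $|t|\le2$ forces $|(\alpha+2)t\xi^{\alpha+1}|\le\tfrac12|x|$, whence $|\phi'|\gtrsim|x|$ with no stationary point; two integrations by parts---checking that the resulting terms, which carry $\phi''$ and $\phi'''$ over powers of $\phi'$, are all controlled by means of $|x|>c\,2^{k(\alpha+1)}$---yield $|I_{k}|\lesssim|x|^{-2}\lesssim(1+x^{2})^{-1}$. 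Combining the three estimates gives \eqref{eq18.2}.

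The main obstacle is the uniformity in $t$ throughout $|t|\le2$ in the middle range: the van der Corput lower bound on $|\phi''|$ degenerates as $t\to0$, so no single mechanism suffices, and the delicate point is that the van der Corput bound $(|t|2^{k\alpha})^{-1/2}$ collapses to the target $2^{k/2}|x|^{-1/2}$ exactly along the stationary locus $|x|\sim|t|2^{k(\alpha+1)}$, with integration by parts taking over off it; arranging the constant $c$ in the third regime consistently with this split is the bookkeeping that needs care. For \eqref{eqA.1.1.1} I would then split $\sum_{l\in\mathbb{Z}}H^{\alpha}_{k}(|l|)$ into the same three ranges: the finitely many terms with $|l|\le1$ contribute $O(2^{k})$, the tail $|l|>c\,2^{k(\alpha+1)}$ is summable and contributes $O\bigl((2^{k(\alpha+1)})^{-1}\bigr)$, and the middle range is estimated by comparison with an integral,
\[
2^{k/2}\sum_{1\le|l|\le c\,2^{k(\alpha+1)}}|l|^{-1/2}\lesssim2^{k/2}\int_{1}^{c\,2^{k(\alpha+1)}}x^{-1/2}\,\mathrm{d}x\sim2^{k/2}\bigl(2^{k(\alpha+1)}\bigr)^{1/2}=2^{k\left(\frac{\alpha+2}{2}\right)},
\]
which is the dominant contribution. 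I note that this computation produces the exponent $\tfrac{\alpha+2}{2}$: the $|l|^{-1/2}$ decay of the middle piece is not summable quickly enough to overcome the length $2^{k(\alpha+1)}$ of that range, so one loses an extra factor $2^{k/2}$ beyond the smoothing exponent $\tfrac{\alpha+1}{2}$, and the right--hand side of \eqref{eqA.1.1.1} is more naturally read as $2^{k(\frac{\alpha+2}{2})}$.
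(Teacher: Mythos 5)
Your argument for \eqref{eq18.2} is correct and is essentially the standard one: the paper does not prove this lemma itself but refers to Proposition 2.6 of \cite{KPV3} and Lemma 2.6 of \cite{LIPICLA}, both of which run exactly your localization-plus-van der Corput scheme (trivial bound for $|x|\le 1$, second-derivative van der Corput versus non-stationary integration by parts in the middle range according to whether the stationary point $|x|\sim|t|2^{k(\alpha+1)}$ can fall in the support, and two integrations by parts in the outer regime). The one ingredient you should state explicitly is that $\psi_{k}$ carries the uniform Littlewood--Paley bounds $\|\partial_{\xi}^{m}\psi_{k}\|_{L^{1}}\lesssim 2^{k(1-m)}$; these are what make the boundary and error terms $\psi_{k}''/(\phi')^{2}$, $\psi_{k}'\phi''/(\phi')^{3}$, $\psi_{k}\phi'''/(\phi')^{3}$, $\psi_{k}(\phi'')^{2}/(\phi')^{4}$ close to $|x|^{-2}$ when $|x|>c\,2^{k(\alpha+1)}$, and your bookkeeping there does check out.

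Your observation about \eqref{eqA.1.1.1} is also correct and worth recording: the middle range contributes $2^{k/2}\sum_{1\le|l|\le c2^{k(\alpha+1)}}|l|^{-1/2}\sim 2^{k(\alpha+2)/2}$, and since all terms are nonnegative this is a lower bound as well, so the displayed right-hand side $2^{k(\alpha+1)/2}$ cannot hold for this $H^{\alpha}_{k}$. The exponent $\tfrac{\alpha+2}{2}$ is the one consistent with the limiting Airy case: for $\alpha=1$ it gives $2^{3k/2}$, which is precisely what forces the threshold $3/4$ in the local maximal function estimate (compare Corollary \ref{cor1.1}, where $\eta>3/4$ appears). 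The discrepancy comes from transcribing Lemma 2.6 of \cite{LIPICLA}, where the dispersive symbol is $\xi|\xi|^{\alpha}$, the critical scale is $2^{k\alpha}$, and the sum is indeed $2^{k(\alpha+1)/2}$; here the symbol is $\xi|\xi|^{\alpha+1}$, so every occurrence of $\alpha$ in that statement must be shifted to $\alpha+1$, and the right-hand side of \eqref{eqA.1.1.1} should read $2^{k\left(\frac{\alpha+2}{2}\right)}$. With that corrected exponent your proof is complete.
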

\begin{proof}
	The proof of estimate (\ref{eq18.2}) is given in  Proposition 2.6 \cite{KPV3} and it  uses arguments of localization  and the classical Van der Corput's Lemma. Meanwhile,   (\ref{eqA.1.1.1}) follows exactly  that of Lemma 2.6 in \cite{LIPICLA}.
\end{proof}
\begin{thm}
	Assume $0<\alpha<1.$	Let $s>\frac{1}{2}.$
	Then,
		\begin{equation*}
	\left\|S(t)u_{0}\right\|_{L^{2}_{x}L^{\infty}_{t}([-1,1])}\leq \left(\sum_{j=-\infty}^{\infty}\sup_{|t|\leq 1}\sup_{j \leq x<j+1}\left|S(t)u_{0}(x)\right|^{2}\right)^{1/2}\lesssim \|u_{0}\|_{H_{x}^{s}}
	\end{equation*}	
	for any $u_{0}\in H^{s}(\mathbb{R}).$
\end{thm}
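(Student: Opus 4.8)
The first inequality is purely formal: partitioning $\mathbb{R}=\bigcup_{j\in\mathbb{Z}}[j,j+1)$ and using that for $x\in[j,j+1)$ one has $\sup_{|t|\leq 1}|S(t)u_0(x)|\leq\sup_{j\leq x'<j+1}\sup_{|t|\leq 1}|S(t)u_0(x')|$, a quantity constant on an interval of unit length, integration in $x$ reproduces exactly the middle sum. The content is the second inequality, and the plan is to decompose $u_0=\sum_{k\geq 0}P_ku_0$ into Littlewood--Paley pieces, with $\widehat{P_ku_0}=\psi_k\widehat{u_0}$, $\psi_k$ supported in $[2^{k-1},2^{k+1}]$ for $k\geq 1$ and $P_0$ gathering the frequencies $|\xi|\lesssim 1$. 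Writing $N(f)$ for the middle expression, the inner supremum is subadditive and the outer $\ell^2_j$ norm obeys the triangle inequality, so $N(u_0)\leq\sum_{k\geq 0}N(P_ku_0)$. It then suffices to prove a frequency-localized bound $N(P_ku_0)\lesssim 2^{\beta k}\|P_ku_0\|_{L^2}$ with $\beta$ as small as possible; writing $2^{\beta k}=2^{-(s-\beta)k}2^{sk}$ and applying Cauchy--Schwarz in $k$, any $s>\beta$ closes the sum into $\|u_0\|_{H^s}$.

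For the localized estimate I would realize the flow as a convolution. Since $\widehat{P_ku_0}$ is supported near $|\xi|\sim 2^k$, one has $S(t)P_ku_0=K_{k,t}\ast P_ku_0$, where by \eqref{solh} the kernel $K_{k,t}$ has Fourier symbol $\ex^{it\xi|\xi|^{\alpha+1}}$ times a cutoff equal to one on $\mathrm{supp}\,\psi_k$. The decisive point is that \eqref{eq18.2} bounds this kernel \emph{uniformly in time}: $\sup_{|t|\leq 1}|K_{k,t}(z)|\lesssim H^\alpha_k(z)$, the majorant being independent of $t$. Discretizing space into the unit intervals $Q_j=[j,j+1)$ and using that $H^\alpha_k$ is, up to harmless shifts, radially nonincreasing, for $x\in Q_j$ a Cauchy--Schwarz on each $Q_l$ gives $\sup_{|t|\leq 1}|S(t)P_ku_0(x)|\lesssim\sum_{l}H^\alpha_k(|j-l|)\,\|P_ku_0\|_{L^2(Q_l)}$. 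Taking the $\ell^2_j$ norm, Young's inequality for discrete convolutions together with \eqref{eqA.1.1.1} controls the right-hand side by $\big(\sum_l H^\alpha_k(|l|)\big)\|P_ku_0\|_{L^2}$, i.e. by $2^{\beta k}\|P_ku_0\|_{L^2}$ with $\beta$ read off from \eqref{eqA.1.1.1}, and summing as above yields the estimate for every $s>\beta$.

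The main obstacle is the handling of the supremum in $t$, which is exactly the step that fixes the admissible value of $s$. Replacing $\sup_{|t|\leq1}|K_{k,t}|$ by the single time-independent majorant $H^\alpha_k$ is convenient but wasteful, and to reach the sharp threshold $s>\tfrac12$ one cannot afford the full loss coming from the lattice sum of $H^\alpha_k$; instead one must exploit the oscillation in $t$. The clean way to see that only half a derivative is lost is to combine the Sobolev embedding $H^{1/2^+}_t\hookrightarrow L^\infty_t$ with the local smoothing effect: a half time-derivative weighs $\langle\xi|\xi|^{\alpha+1}\rangle^{1/2}\sim 2^{k(\alpha+2)/2}$ on the characteristic $\tau=\xi|\xi|^{\alpha+1}$, while the smoothing encoded in the decay of $H^\alpha_k$ (and summarized by \eqref{eqA.1.1.1}) returns a gain of $2^{-k(\alpha+1)/2}$, the net cost being $2^{k/2}$. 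Carrying this bookkeeping out inside the $\ell^2_j$-summed, unit-interval maximal norm $N(\cdot)$—rather than in the plain $L^\infty_xL^2_t$ local-smoothing norm—is the delicate part, and it is there that the precise three-regime structure of $H^\alpha_k$ in \eqref{eq18.2} is genuinely used.
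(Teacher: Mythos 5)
Your reductions are sound: the first inequality, the Littlewood--Paley splitting, the triangle inequality for the norm $N(\cdot)$, and the Cauchy--Schwarz summation in $k$ all work. The gap is in the frequency-localized estimate, and it is quantitative but fatal. Majorizing $\sup_{|t|\le 1}|K_{k,t}|$ by the time-independent kernel $H^{\alpha}_{k}$ and then applying discrete Young's inequality pays the \emph{full} lattice sum of \eqref{eqA.1.1.1} as the constant, so your second paragraph proves $N(P_{k}u_{0})\lesssim 2^{k\frac{\alpha+1}{2}}\|P_{k}u_{0}\|_{L^{2}}$ and hence the theorem only for $s>\frac{\alpha+1}{2}$, which exceeds $\tfrac12$ for \emph{every} $\alpha\in(0,1)$; the stated threshold is never reached. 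You acknowledge this, but the remedy you sketch (Sobolev embedding in $t$ combined with local smoothing) is not a proof and, as set up, cannot be completed: the inequality $\sup_{t}|F(x,t)|^{2}\lesssim \|F(x,\cdot)\|_{L^{2}_{t}}\|\partial_{t}F(x,\cdot)\|_{L^{2}_{t}}$ together with the $L^{\infty}_{x}L^{2}_{t}$ smoothing bound yields only $\sup_{x,t}|S(t)P_{k}u_{0}|\lesssim 2^{k/2}\|P_{k}u_{0}\|_{L^{2}}$, a single $L^{\infty}_{x}$ bound with no decay in $j$, so the $\ell^{2}_{j}$ sum over unit intervals --- which is the entire content of the theorem --- diverges. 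The step you defer as ``the delicate part'' is precisely the missing idea.

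The proof the paper invokes (Theorem 2.7 of \cite{KPV3}) obtains the needed square-root saving by linearizing the supremum instead of majorizing the kernel. One chooses measurable $x_{j}\in[j,j+1)$, $t_{j}\in[-1,1]$ nearly attaining the supremum and coefficients $a_{j}$ with $\sum_{j}|a_{j}|^{2}=1$, writes $N(P_{k}u_{0})\lesssim \sum_{j}a_{j}\,S(t_{j})P_{k}u_{0}(x_{j})$, passes to the Fourier side and applies Cauchy--Schwarz in $\xi$; the resulting square
\begin{equation*}
\sum_{j,l}a_{j}\overline{a_{l}}\int_{\mathbb{R}}\ex^{i\left((t_{j}-t_{l})\xi|\xi|^{\alpha+1}+(x_{j}-x_{l})\xi\right)}\,|\psi_{k}(\xi)|\,\mathrm{d}\xi
\end{equation*}
is controlled by \eqref{eq18.2} (applicable since $|t_{j}-t_{l}|\le 2$) and then by \eqref{eqA.1.1.1} via Schur's test. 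The lattice sum is therefore paid only once at the level of $N(P_{k}u_{0})^{2}$, i.e.\ its square root at the level of $N(P_{k}u_{0})$, giving $N(P_{k}u_{0})\lesssim 2^{k\frac{\alpha+1}{4}}\|P_{k}u_{0}\|_{L^{2}}$ and hence the estimate for $s>\frac{\alpha+1}{4}$, which is below $\tfrac12$. This $TT^{*}$-type linearization is the idea your proposal lacks; without it the kernel bound \eqref{eq18.2} and the sum \eqref{eqA.1.1.1} are used at the wrong power.
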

\begin{proof}
	See Theorem 2.7 in  \cite{KPV3}.
\end{proof}
Next, we recall a maximal  function estimate proved by Kenig, Ponce and Vega  \cite{KPV3}.
\begin{cor}\label{cor1.1}
Assume that   $0<\alpha<1.$ Then, for any $s>\frac{1}{2}$  and any $\eta >\frac{3}{4}$
	\begin{equation*}
\left(	\sum_{j=-\infty}^{\infty} \sup_{|t|\leq T} \sup_{j \leq x<j+1} |S(t)v_{0}|^{2}\right)^{1/2}\lesssim (1+T)^{\eta}\|v_{0}\|_{H^{s}_{x}}.
	\end{equation*}
\end{cor}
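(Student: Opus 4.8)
The plan is to deduce the long-time bound on $[-T,T]$ from the unit-time maximal estimate of the preceding Theorem (Theorem 2.7 in \cite{KPV3}) by a subdivision-in-time argument, using the group law $S(t+s)=S(t)S(s)$ together with the fact that $S(t)$ is an isometry on $H^{s}(\mathbb{R})$ for every $t$. The isometry property is immediate: $S(t)$ is the Fourier multiplier with unimodular symbol $\ex^{it|\xi|^{\alpha+1}\xi}$, so it commutes with $J^{s}$ and preserves the $L^{2}$ norm, whence $\|S(t)v_{0}\|_{H^{s}_{x}}=\|v_{0}\|_{H^{s}_{x}}$.

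First I would cover $[-T,T]$ by $N\lesssim 1+T$ intervals $I_{k}=[t_{k}-1,t_{k}+1]$ of length $2$ centered at points $t_{k}$. For $t\in I_{k}$ we write $t=t_{k}+\tau$ with $|\tau|\le 1$ and factor $S(t)v_{0}=S(\tau)\bigl(S(t_{k})v_{0}\bigr)$. Setting $w_{k}:=S(t_{k})v_{0}$ and bounding the supremum over $|t|\le T$ by the sum of the suprema over the $I_{k}$, then interchanging the (now finite) $k$-sum with the $j$-sum, gives
\[
\sum_{j=-\infty}^{\infty}\sup_{|t|\le T}\sup_{j\le x<j+1}|S(t)v_{0}(x)|^{2}
\le \sum_{k=1}^{N}\ \sum_{j=-\infty}^{\infty}\sup_{|\tau|\le 1}\sup_{j\le x<j+1}|S(\tau)w_{k}(x)|^{2}.
\]
For each fixed $k$ the preceding Theorem (applicable since $s>\tfrac12$) controls the inner $j$-sum by $c\,\|w_{k}\|_{H^{s}_{x}}^{2}$, and the isometry property turns this into $c\,\|v_{0}\|_{H^{s}_{x}}^{2}$, a bound independent of $k$.

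Summing the $N\lesssim 1+T$ identical contributions and taking square roots yields the estimate with the explicit factor $(1+T)^{1/2}$, and since $(1+T)^{1/2}\le (1+T)^{\eta}$ for all $\eta\ge\tfrac12$ and $T\ge0$, the claimed bound follows for any $\eta>\tfrac34$. The only substantive ingredient is the unit-time maximal-function estimate of the preceding Theorem; the subdivision is elementary, and the single point that must be verified is that the number of time-subintervals grows linearly in $T$ while the isometry property removes all $k$-dependence from the summand. I therefore do not anticipate any real obstacle, and I note that this crude argument in fact produces a better exponent than the one stated.
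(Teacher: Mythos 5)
Your argument is correct. The paper itself does not prove this corollary --- it simply cites Corollary 2.8 of \cite{KPV3} --- so your write-up supplies a proof where the paper defers to a reference. The subdivision scheme you use is the standard one and every step checks out: $[-T,T]$ is covered by $N\lesssim 1+T$ intervals of length $2$, the group law $S(t_k+\tau)=S(\tau)S(t_k)$ and the unitarity of the multiplier $\ex^{it|\xi|^{\alpha+1}\xi}$ reduce each piece to the unit-time maximal estimate of the preceding theorem applied to $w_k=S(t_k)v_0$ with $\|w_k\|_{H^s_x}=\|v_0\|_{H^s_x}$, and the interchange of the $j$-sum with the finite $k$-sum is legitimate since all terms are non-negative. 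This yields the bound with the factor $(1+T)^{1/2}$, which is indeed sharper than the stated $(1+T)^{\eta}$ with $\eta>\tfrac34$; the weaker exponent in the statement is simply inherited from the form in which \cite{KPV3} records the analogous estimate for the Airy group, and nothing in the paper requires more than polynomial growth in $T$ (in the proof of Theorem B the corollary is only invoked for $T\le 1$). So your route is not wrong, merely more explicit than the paper's, and the one point worth flagging is cosmetic: you prove a slightly stronger inequality than the one asserted, which of course implies the asserted one.
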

\begin{proof}
	See   Corollary 2.8 in \cite{KPV3}.
\end{proof}
\subsection{The Nonlinear Problem}
This section  is devoted to study  general properties of solutions of the non-linear problem
 \begin{equation}\label{HoP1}
 \left\{
 \begin{array}{ll}
 \partial_{t}u-D_{x}^{\alpha+1}\partial_{x}u+u\partial_{x}u=0, & x,t\in\mathbb{R},\,0<\alpha<1, \\
 u(x,0)=u_{0}(x).&  \\
 \end{array} 
 \right.
 \end{equation}
 We begin this section  stating the following local existence theorem proved by Kato \cite{KATO3} and  Saut,Teman \cite {Saut-teman}.
 \begin{thm}\label{sautth}
 	\begin{enumerate}
 	  \item For any $u_{0}\in H^{s}(\mathbb{R})$ with $s>\frac{3}{2}$ there exists a unique  solution  $u$ to (\ref{HoP1})  in the class  ${\displaystyle C([-T,T]: H^{s}(\mathbb{R}))}$  with  $T=T(\|u_{0}\|_{H^{s}})>0.$
 	  \item For any $T'<T$ there exists  a neighborhood  $V$ of  $u_{0}$  in $H^{s}(\mathbb{R})$ such that the map  $\widetilde{u_{0}}\longmapsto\widetilde{u}(t)$ from $V$  into $C\left([-T',T']:H^{s}(\mathbb{R})\right)$ is continuous.
 	  \item If $u_{0}\in H^{s'}(\mathbb{R})$   with $s'>s,$  then the  time of existence  $T$  can be taken  to depend only on  $\|u_{0}\|_{H^{s}}.$
 	 \end{enumerate} 		 	
 \end{thm}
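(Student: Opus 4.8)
The plan is to prove Theorem~\ref{sautth} by the classical energy method combined with an artificial-viscosity (parabolic) regularization, which is how Kato and Saut--Temam proceed. The structural observation that drives everything is that the dispersive operator $D_x^{\alpha+1}\partial_x$ has the purely imaginary Fourier symbol $i\xi|\xi|^{\alpha+1}$, so it is skew-adjoint on $L^2$ and commutes with every Bessel potential $J^s$; consequently it contributes nothing to $L^2$-based energy identities and $S(t)$ is a unitary group on every $H^s(\mathbb{R})$. The only source of a derivative loss is therefore the transport nonlinearity $u\partial_x u$, and this is exactly the term that the Kato--Ponce commutator estimate \eqref{eq91} is designed to control.

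\textbf{A priori estimate.} Apply $J^s$ to the equation, pair with $J^s u$ in $L^2$, and use skew-adjointness of $D_x^{\alpha+1}\partial_x$ to kill the dispersive term. For the nonlinear contribution write $J^s(u\partial_x u)=u\,\partial_x J^s u+[J^s;u]\partial_x u$. The first piece is integrated by parts, $\int u\,(\partial_x J^s u)\,J^s u\,\mathrm{d}x=-\tfrac12\int(\partial_x u)(J^s u)^2\,\mathrm{d}x$, and the commutator piece is bounded by \eqref{eq91}, so that $\|[J^s;u]\partial_x u\|_{L^2}\lesssim\|\partial_x u\|_{L^\infty}\|J^{s-1}\partial_x u\|_{L^2}+\|J^s u\|_{L^2}\|\partial_x u\|_{L^\infty}\lesssim\|\partial_x u\|_{L^\infty}\|u\|_{H^s}$. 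Since $s>\tfrac32$ gives $s-1>\tfrac12$ and hence the Sobolev embedding $\|\partial_x u\|_{L^\infty}\lesssim\|u\|_{H^s}$, these combine to
\[
\frac{\mathrm{d}}{\mathrm{d}t}\|u(t)\|_{H^s}^2\le c\,\|\partial_x u(t)\|_{L^\infty}\,\|u(t)\|_{H^s}^2\le c\,\|u(t)\|_{H^s}^3 .
\]
Integrating this differential inequality yields a time $T=T(\|u_0\|_{H^s})>0$ and a uniform bound $\sup_{[0,T]}\|u\|_{H^s}\le 2\|u_0\|_{H^s}$. The same computation at level $s'>s$ gives $\frac{\mathrm{d}}{\mathrm{d}t}\|u\|_{H^{s'}}^2\le c\|\partial_x u\|_{L^\infty}\|u\|_{H^{s'}}^2$, where the multiplier $\|\partial_x u\|_{L^\infty}$ is still controlled by the \emph{lower} norm $\|u\|_{H^s}$; hence the $H^{s'}$ norm stays finite on the same interval, which is precisely the persistence statement~(3).

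\textbf{Construction and uniqueness.} To produce an actual solution I would regularize by adding the dissipative term $\epsilon\partial_x^2 u^\epsilon$ to the right-hand side. For fixed $\epsilon>0$ the smoothing of the heat semigroup absorbs the single derivative in $u^\epsilon\partial_x u^\epsilon$, so a contraction-mapping argument produces a smooth solution $u^\epsilon$; since $\epsilon\partial_x^2$ only adds the favourable term $-\epsilon\|\partial_x J^s u^\epsilon\|_{L^2}^2\le0$ to the energy identity above, the a priori bound is uniform in $\epsilon$ on $[0,T]$. The equation then bounds $\partial_t u^\epsilon$ in $H^{s-2}$ uniformly, so by Aubin--Lions compactness a subsequence converges to a limit $u\in L^\infty([0,T]:H^s)$ solving \eqref{HoP1}, and the energy bound together with weak continuity upgrades this to $u\in C([0,T]:H^s)$. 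Uniqueness follows by estimating the difference $w=u_1-u_2$ at the $L^2$ level: $w$ solves a linear dispersion--transport equation with forcing $-w\,\partial_x u_1-u_2\,\partial_x w$, and since the dispersive term is again skew-adjoint and $\int u_2\,(\partial_x w)\,w=-\tfrac12\int(\partial_x u_2)w^2$, one gets $\frac{\mathrm{d}}{\mathrm{d}t}\|w\|_{L^2}^2\lesssim(\|\partial_x u_1\|_{L^\infty}+\|\partial_x u_2\|_{L^\infty})\|w\|_{L^2}^2$, whence $w\equiv0$ by Gronwall.

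I expect the genuinely delicate point to be the continuous-dependence statement~(2): the energy method yields Lipschitz dependence only in the weaker $L^2$ norm, not continuity of $\widetilde{u_0}\mapsto\widetilde{u}$ into $C([0,T']:H^s)$, because the top-order energy estimate is not closed under differences. The standard remedy, which I would follow, is the Bona--Smith argument: approximate the datum by mollified data $u_0^\delta$, use the uniform $H^s$ bounds and the higher-regularity persistence~(3) to control $\|u^\delta-u\|$ in $H^s$ by interpolating between the $L^2$ Lipschitz bound and the uniform $H^{s'}$ bounds, and let $\delta\to0$. Reconciling the loss-free low-norm continuity with the uniform high-norm bounds through this interpolation is the crux of the argument.
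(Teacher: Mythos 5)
Your proposal is correct and follows essentially the classical route: the paper offers no proof of this theorem, citing Kato and Saut--Temam, and your argument (skew-adjointness of $D_x^{\alpha+1}\partial_x$, the $H^s$ energy estimate closed via the Kato--Ponce commutator bound \eqref{eq91} and Sobolev embedding for $s>\tfrac32$, parabolic regularization with uniform-in-$\epsilon$ bounds, $L^2$ uniqueness by Gronwall, and Bona--Smith for continuous dependence) is precisely the standard proof in those references. The same top-order energy computation also appears in the paper as Lemma \ref{kpp}, so your a priori estimate matches the paper's own machinery.
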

Our first goal  will be obtain    some energy estimates satisfied by  smooth  solutions of the IVP   (\ref{HoP1}).

 We  firstly present a result that arises  as a consequence of commutator estimates.
	\begin{lem}\label{kpp}
		Suppose that  $0<\alpha<1.$
		 Let  $u\in C([0,T]:H^{\infty}(\mathbb{R}))$ be a smooth solution of (\ref{HoP1}). 
		If $s>0$ is given, then
		\begin{equation}\label{eq295}
		\|u\|_{L^{\infty}_{T} H^{s}_{x}}\lesssim \|u_{0}\|_{H^{s}_{x}}\ex^{c \|\partial_{x}u\|_{L^{1}_{T}L^{\infty}_{x}}}.
		\end{equation}
			\end{lem}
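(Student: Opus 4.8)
The plan is to establish \eqref{eq295} as a standard $H^s$ a priori energy estimate, exploiting that the dispersive term is a skew-adjoint Fourier multiplier and isolating the loss of derivatives in the nonlinearity through the Kato--Ponce commutator estimate \eqref{eq91}. First I would apply $J^s$ to the equation in \eqref{HoP1}, pair the result with $J^s u$ in $L^2_x$, and use that $u$ is smooth and rapidly decaying to justify differentiating under the integral sign and integrating by parts, obtaining
\[
\frac{1}{2}\frac{d}{dt}\|J^s u\|_{L^2_x}^2 = \langle J^s D_x^{\alpha+1}\partial_x u, J^s u\rangle_{L^2} - \langle J^s(u\partial_x u), J^s u\rangle_{L^2}.
\]
The operator $D_x^{\alpha+1}\partial_x$ is the Fourier multiplier with symbol $i|\xi|^{\alpha+1}\xi$, which is purely imaginary; hence it is skew-adjoint on $L^2$ and commutes with $J^s$. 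Therefore $\langle J^s D_x^{\alpha+1}\partial_x u, J^s u\rangle = \langle D_x^{\alpha+1}\partial_x J^s u, J^s u\rangle = 0$, and the entire linear dispersive contribution drops out.

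It then remains to control the nonlinear term. I would split
\[
J^s(u\partial_x u) = u\, \partial_x J^s u + [J^s; u]\partial_x u,
\]
using that $J^s$ and $\partial_x$ commute. For the first piece, writing $u\,\partial_x(J^s u)\cdot J^s u = \frac{1}{2}u\,\partial_x\big((J^s u)^2\big)$ and integrating by parts produces $-\frac{1}{2}\int (\partial_x u)(J^s u)^2\,\mathrm{d}x$, which is bounded by $\frac{1}{2}\|\partial_x u\|_{L^\infty_x}\|J^s u\|_{L^2_x}^2$. For the commutator piece I would apply \eqref{eq91} with $p=2$ and exponents $(p_1,p_2,p_3,p_4)=(\infty,2,2,\infty)$, namely
\[
\|[J^s;u]\partial_x u\|_{L^2_x}\lesssim \|\partial_x u\|_{L^\infty_x}\|J^{s-1}\partial_x u\|_{L^2_x}+\|J^s u\|_{L^2_x}\|\partial_x u\|_{L^\infty_x},
\]
and then absorb $\|J^{s-1}\partial_x u\|_{L^2_x}\lesssim\|J^s u\|_{L^2_x}$, since $\langle\xi\rangle^{s-1}|\xi|\lesssim\langle\xi\rangle^s$. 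A Cauchy--Schwarz pairing against $J^s u$ converts this into a bound by $\|\partial_x u\|_{L^\infty_x}\|J^s u\|_{L^2_x}^2$.

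Collecting the two contributions yields the differential inequality
\[
\frac{d}{dt}\|J^s u\|_{L^2_x} \le c\,\|\partial_x u\|_{L^\infty_x}\,\|J^s u\|_{L^2_x},
\]
and Gronwall's inequality integrated on $[0,t]$ gives $\|J^s u(t)\|_{L^2_x}\le \|u_0\|_{H^s_x}\exp\big(c\int_0^t\|\partial_x u(\cdot,\tau)\|_{L^\infty_x}\,\mathrm{d}\tau\big)$. Taking the supremum over $t\in[0,T]$ and bounding the integral by $\|\partial_x u\|_{L^1_T L^\infty_x}$ produces \eqref{eq295}. The only genuinely delicate point is the nonlinear term, where a naive estimate would lose a full derivative; the Kato--Ponce commutator estimate is exactly what recovers it, so once \eqref{eq91} is in hand the argument is routine. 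It is worth emphasizing that the non-integer, nonlocal nature of $D_x^{\alpha+1}\partial_x$ causes no difficulty at this stage precisely because the operator enters linearly and skew-adjointly, and thus contributes nothing to the $L^2$-based energy; the genuine challenge posed by this operator appears only later in the \emph{weighted} energy estimates.
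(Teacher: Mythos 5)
Your argument is correct and is essentially the paper's own proof: the paper likewise derives the identity $\tfrac12\tfrac{d}{dt}\|J^su\|_{L^2}^2+\int[J^s;u]\partial_xu\,J^su\,dx+\int uJ^su\,J^s\partial_xu\,dx=0$ (the dispersive term vanishing by skew-adjointness) and then closes the estimate via integration by parts, the Kato--Ponce commutator bound \eqref{eq91}, and Gronwall. The only difference is that you spell out the choice of exponents and the absorption $\|J^{s-1}\partial_xu\|_{L^2}\lesssim\|J^su\|_{L^2}$, which the paper leaves implicit.
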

	\begin{proof}  
		Let $s>0.$		
		By a standard energy estimate argument we have that
		\begin{equation*}
	\frac{1}{2}\frac{\mathrm{d}}{\mathrm{d}t}\int_{\mathbb{R}}\left(J^{s}_{x}u\right)^{2}\mathrm{d}x+\int_{\mathbb{R}}\left[J_{x}^{s}; u\right]\partial_{x}u\, J_{x}^{s}u\,\mathrm{d}x+\int_{\mathbb{R}}uJ_{x}^{s}u J_{x}^{s}\partial_{x}u\,\mathrm{d}x=0.
	\end{equation*}
	Hence integration by parts, Gronwall's inequality   and commutator estimate (\ref{eq91}) lead to (\ref{eq295}).
	\end{proof}
	\begin{rem}
		In view of the energy estimate (\ref{eq295}), the key point  to obtain \emph{a priori} estimates in $H^{s}_{x}(\mathbb{R})$ is to control $\|\partial_{x}u\|_{L^{1}_{T}L^{\infty}_{x}}$ at the  $H^{s}_{x}(\mathbb{R})-$level.   	
	\end{rem}
Additionally to this estimate, we will present  the smoothing effect provided by solutions of dispersive generalized Benjamin-Ono equation. In fact, the   smoothing   effect was first  observed by Kato in the context of the Korteweg-de Vries equation (see \cite{KATO1}). Following Kato's approach joint with the commutator expansions,  we  present a result  proved by Kenig-Ponce-Vega  \cite{KPV3} (see  Lemma 5.1). 
\begin{prop}\label{sm1}
	Let $\varphi$ denote a  non-decreasing smooth function  such that  $\supp \varphi'\subset(-1,2)$  and $\varphi'|_{[0,1)}=1.$
	For $j\in \mathbb{Z},$  we define  $\varphi_{j}(\cdot)=\varphi(\cdot -j).$ 
	Let 
	$u\in C([0,T]: H^{\infty}({\mathbb{R}}))$ be a real smooth solution  of (\ref{eq7})
	with $0<\alpha<1.$  
	Assume also, that $s\geq 0$  and $r>1/2.$ 
	
	Then,
	\begin{equation}\label{eq101}
	\begin{split}
	&\left(\int_{0}^{T}\int_{\mathbb{R}}\left(\left|D_{x}^{s+\frac{\alpha+1}{2}}u(x,t)\right|^{2} +\left|D_{x}^{s+\frac{\alpha+1}{2}}\mathcal{H}u(x,t)\right|^{2}\right)\varphi'_{j}(x) \,\mathrm{d}x\,\mathrm{d}t\right)^{1/2}\\
	&\lesssim \left(1+T+\|\partial_{x}u\|_{L^{1}_{T} L^{\infty}_{x}}+T\|u\|_{L^{\infty}_{T} H^{r}_{x}}\right)^{1/2}\|u\|_{L^{\infty}_{T}H^{s}_{x}}.
	\end{split}
	\end{equation}
\end{prop}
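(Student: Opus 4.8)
The plan is to run a weighted $H^s$ energy estimate in which the dispersive term is converted into the sought local smoothing by means of the localization identity in Lemma \ref{prop1}, while the nonlinear contribution is controlled with the fractional Leibniz estimates of Section 3. Set $w=D_x^{s}u$. Applying the homogeneous operator $D_x^{s}$ to the equation in \eqref{HoP1} gives $\partial_t w-D_x^{\alpha+1}\partial_x w+D_x^{s}(u\partial_x u)=0$. Multiplying by $\varphi_j\,w$ and integrating in $x$ produces $\tfrac12\frac{\mathrm{d}}{\mathrm{d}t}\int\varphi_j w^{2}\,\mathrm{d}x$ from the time derivative, whereas the dispersive term $\int\varphi_j\,w\,D_x^{\alpha+1}\partial_x w\,\mathrm{d}x$ is precisely the left-hand side of identity \eqref{eq1}. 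Since $\varphi_j'=\varphi'(\cdot-j)\in C_0^{\infty}(\mathbb{R})$, Lemma \ref{prop1} applies with $f=w$ and weight $\varphi_j$, and using $D_x^{\frac{\alpha+1}{2}}w=D_x^{s+\frac{\alpha+1}{2}}u$ it yields
\begin{equation*}
\begin{split}
\int_{\mathbb{R}} \varphi_j\, w\, D_x^{\alpha+1}\partial_x w\,\mathrm{d}x
&= \frac{\alpha+2}{4}\int_{\mathbb{R}}\left(\left|D_x^{s+\frac{\alpha+1}{2}}u\right|^{2} + \left|D_x^{s+\frac{\alpha+1}{2}}\mathcal{H}u\right|^{2}\right)\varphi_j'\,\mathrm{d}x\\
&\quad + \frac{1}{2}\int_{\mathbb{R}} w\, R_{0}(\alpha+2)w\,\mathrm{d}x.
\end{split}
\end{equation*}
Integrating in time over $[0,T]$ and rearranging isolates the smoothing term, which is nonnegative because $\varphi$ is non-decreasing and $\tfrac{\alpha+2}{4}>0$.

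It then remains to bound the three resulting contributions. The boundary term $\tfrac12\int\varphi_j w^{2}\,\mathrm{d}x\big|_{0}^{T}$ is controlled by $\|\varphi_j\|_{L^{\infty}}\|u\|_{L^{\infty}_{T}H^{s}_{x}}^{2}$, uniformly in $j$. For the commutator remainder I would invoke Proposition \ref{propo2} with $n=0$, $a=\alpha+2\in(2,3)$ and $\sigma=0$: the admissibility condition $1\le a\le3$ holds, so $R_{0}(\alpha+2)$ is bounded on $L^{2}$ with operator norm controlled by $\|\widehat{(D^{\alpha+2}\varphi_j)}\|_{L^{1}_{\xi}}$ (one may take $C=1$). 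This quantity is finite, since $D^{\alpha+2}\varphi_j=D^{\alpha+1}\varphi_j'$ with $\varphi_j'\in C_0^{\infty}$, and it is independent of $j$ by translation invariance; hence $\int_0^{T}\int w\,R_{0}(\alpha+2)w\,\mathrm{d}x\,\mathrm{d}t\lesssim T\|u\|_{L^{\infty}_{T}H^{s}_{x}}^{2}$.

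For the nonlinear term I would split $D_x^{s}(u\partial_x u)=u\,\partial_x w+\bigl(D_x^{s}(u\partial_x u)-u\,D_x^{s}\partial_x u\bigr)$. The main piece pairs with $\varphi_j w$ and, after integration by parts in $x$, equals $-\tfrac12\int\partial_x(\varphi_j u)\,w^{2}\,\mathrm{d}x$, which is bounded by $\bigl(\|u\|_{L^{\infty}}+\|\partial_x u\|_{L^{\infty}}\bigr)\|u\|_{H^{s}}^{2}$; integrating in time and using the Sobolev embedding $\|u\|_{L^{\infty}}\lesssim\|u\|_{H^{r}}$ for $r>1/2$ produces the terms $\bigl(T\|u\|_{L^{\infty}_{T}H^{r}_{x}}+\|\partial_x u\|_{L^{1}_{T}L^{\infty}_{x}}\bigr)\|u\|_{L^{\infty}_{T}H^{s}_{x}}^{2}$. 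The commutator piece is estimated through Lemma \ref{dlkp} (case (a) when $0<s\le1$, case (b) when $s>1$, and trivially for $s=0$) taken with $f=u$, $g=\partial_x u$ and $(p_1,p_2)=(2,\infty)$, which gives $\|D_x^{s}(u\partial_x u)-u\,D_x^{s}\partial_x u\|_{L^{2}}\lesssim\|\partial_x u\|_{L^{\infty}}\|u\|_{H^{s}}$; pairing against $\varphi_j w$ and integrating in time contributes a further $\|\partial_x u\|_{L^{1}_{T}L^{\infty}_{x}}\|u\|_{L^{\infty}_{T}H^{s}_{x}}^{2}$. Collecting the bounds yields
\begin{equation*}
\begin{split}
&\frac{\alpha+2}{4}\int_{0}^{T}\!\!\int_{\mathbb{R}}\left(\left|D_x^{s+\frac{\alpha+1}{2}}u\right|^{2} + \left|D_x^{s+\frac{\alpha+1}{2}}\mathcal{H}u\right|^{2}\right)\varphi_j'\,\mathrm{d}x\,\mathrm{d}t\\
&\qquad\lesssim \left(1 + T + T\|u\|_{L^{\infty}_{T} H^{r}_{x}} + \|\partial_x u\|_{L^{1}_{T} L^{\infty}_{x}}\right)\|u\|_{L^{\infty}_{T}H^{s}_{x}}^{2},
\end{split}
\end{equation*}
and taking square roots gives \eqref{eq101}.

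The genuinely delicate step is the treatment of the dispersive term. Because the nonlocal operator $D_x^{\alpha+1}\partial_x$ does not localize, a naive integration by parts against $\varphi_j$ does not expose a positive smoothing quantity; it is exactly Lemma \ref{prop1}—resting on the Ginibre–Velo commutator expansion \eqref{eq8} and the boundedness estimate \eqref{eq98}—that makes the explicit gain of $\tfrac{\alpha+1}{2}$ derivatives appear with an $L^{2}$-bounded remainder $R_{0}(\alpha+2)$. Once that identity and the uniform-in-$j$ control of $R_{0}(\alpha+2)$ are in hand, the remaining work is routine energy-method bookkeeping, with the Leibniz commutator estimate of Lemma \ref{dlkp} being the only other nontrivial input.
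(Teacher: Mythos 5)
Your proposal is correct and follows essentially the same route the paper indicates for this proposition: a weighted energy estimate in which the dispersive term is resolved by the Ginibre--Velo commutator expansion through Lemma \ref{prop1}, with the remainder $R_{0}(\alpha+2)$ controlled uniformly in $j$ via Proposition \ref{propo2} and the nonlinearity handled by fractional Leibniz/commutator bounds. The sign bookkeeping, the $j$-independence of $\|\widehat{D^{\alpha+2}\varphi_j}\|_{L^1_\xi}$, and the appearance of each term $1$, $T$, $T\|u\|_{L^\infty_T H^r_x}$, $\|\partial_x u\|_{L^1_T L^\infty_x}$ all check out.
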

In addition to the smoothing effect presented above,  we  will need  the following  localized version  of the $H^{s}(\mathbb{R})$-norm. For this propose we will consider a  cutoff function  $\varphi$, with the same characteristics that in Proposition \ref{sm1}.

\begin{prop}\label{propo3}
	Let $s\geq 0$.  Then,   for any $f\in H^{s}(\mathbb{R})$
	\begin{equation*}
	\|f\|_{H^{s}(\mathbb{R})}\sim \left(\sum_{j=-\infty}^{\infty} \left\|f\varphi_{j}'\right\|_{H^{s}(\mathbb{R})}^{2}\right)^{1/2}.
	\end{equation*}
\end{prop}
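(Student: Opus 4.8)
The plan is to prove the two inequalities hidden in the symbol $\sim$ separately, after recording the elementary properties of the localizing family $g_j:=\varphi_j'$. Since $\varphi$ is non-decreasing with $\supp\varphi'\subset(-1,2)$ and $\varphi'|_{[0,1)}=1$, the functions $g_j(x)=\varphi'(x-j)$ are non-negative, uniformly bounded together with all their derivatives (being translates of $g_0$), supported in intervals of length $3$, and hence have \emph{finite overlap}: at most three of them are non-zero at any point. Moreover $g_j\equiv 1$ on $[j,j+1)$ forces the two-sided bounds $1\le\sum_j g_j(x)\le C_0$ and $1\le\sum_j g_j(x)^2\le C_0$ for every $x$. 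The case $s=0$ is then immediate, since $\sum_j\|fg_j\|_{L^2}^2=\int|f|^2\sum_j g_j^2\,\mathrm{d}x\sim\|f\|_{L^2}^2$.

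For the \textbf{upper bound} $\sum_j\|fg_j\|_{H^s}^2\lesssim\|f\|_{H^s}^2$ with $s>0$, I would write, for each $j$,
\[ J^s(fg_j)=g_j\,J^sf+[J^s;g_j]f. \]
The main term is controlled as in the case $s=0$: $\sum_j\|g_jJ^sf\|_{L^2}^2=\int|J^sf|^2\sum_j g_j^2\,\mathrm{d}x\lesssim\|f\|_{H^s}^2$. The commutator carries the difficulty, because a direct use of \eqref{eq91} produces on the right a quantity independent of $j$, which is not square-summable. To gain summability I would first localize the input: fix $\chi_j=\chi_0(\cdot-j)$ with $\chi_j\equiv1$ on a neighbourhood of $\supp g_j$ and $\supp\chi_j$ in an interval of bounded length, and split $f=f\chi_j+f(1-\chi_j)$. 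On the near piece, $[J^s;g_j](f\chi_j)$ is estimated by the Kato--Ponce commutator inequality \eqref{eq91}; since $\partial_xg_j$ and $J^sg_j$ are uniformly bounded and $\chi_j$ also has finite overlap, the resulting terms involve $\|J^{s-1}(f\chi_j)\|_{L^2}$ and $\|f\chi_j\|_{L^2}$, which sum to $\lesssim\|f\|_{H^s}^2$ (for $s<1$ because $J^{s-1}$ is bounded on $L^2$, and for larger $s$ by an induction on $\lfloor s\rfloor$ applied to the family $\chi_j$). On the far piece, $g_j\cdot f(1-\chi_j)=0$ by disjoint supports, so the commutator collapses to $g_j\,J^s\big(f(1-\chi_j)\big)$, whose supports are separated by a fixed $\delta>0$; this is precisely the setting of Lemma~\ref{lemma1}.

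Here lies the \textbf{main obstacle}: Lemma~\ref{lemma1} as stated only gives a bound by $\|f\|_{L^2}$, which again fails to be summable in $j$. The remedy is a refinement of its proof. One writes $g_j\,J^s\big(f(1-\chi_j)\big)$ as $g_j$ times the convolution of $f(1-\chi_j)$ against the (now non-singular) kernel of $J^s$, and decomposes this kernel into dyadic shells $|y|\sim 2^k\delta$. On the shell of scale $2^k$ the kernel has size $\lesssim 2^{-k(1+s)}$, and Cauchy--Schwarz together with finite overlap in $x$ yields $\sum_j\|(\text{scale-}k\text{ piece})_j\|_{L^2}^2\lesssim 2^{-2ks}\|f\|_{L^2}^2$; summing the geometric series in $k\ge0$, which converges precisely because $s>0$, closes the far estimate. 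This quantitative use of the decay of the kernel of the non-local operator is the heart of the argument, and it is what upgrades Lemma~\ref{lemma1} to a $j$-summable statement.

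Finally, the \textbf{lower bound} $\|f\|_{H^s}\lesssim\big(\sum_j\|fg_j\|_{H^s}^2\big)^{1/2}$ I would obtain by duality. Setting $\beta_j:=g_j/\sum_i g_i^2$, again a uniformly smooth, finite-overlap family, one has the reproducing identity $f=\sum_j(fg_j)\beta_j$. Pairing with $h\in H^{-s}$ and using $H^s$--$H^{-s}$ duality termwise gives $|\langle f,h\rangle|\le\big(\sum_j\|fg_j\|_{H^s}^2\big)^{1/2}\big(\sum_j\|\beta_j h\|_{H^{-s}}^2\big)^{1/2}$, and the second factor is $\lesssim\|h\|_{H^{-s}}$ by the upper bound already proved, now applied at the \emph{negative} order $-s$, where it is in fact easier since $J^{-s}$ is $L^2$-bounded and its Bessel kernel decays rapidly. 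Taking the supremum over $\|h\|_{H^{-s}}\le1$ yields the lower bound and completes the equivalence.
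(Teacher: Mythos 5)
The paper states Proposition~\ref{propo3} without giving any proof (it is invoked as a known localization fact, in the spirit of the almost-orthogonality results used in \cite{KLPV} and \cite{KPV3}), so there is no argument of the author's to compare yours against; I can only assess your proof on its own terms, and on those terms it is essentially correct and complete in outline. You correctly identify the two genuine difficulties: that a naive application of the commutator estimate produces bounds that are not square-summable in $j$, and that Lemma~\ref{lemma1} as stated suffers from the same defect on the far piece. Your remedies are sound. The near/far splitting with a slightly fatter cutoff $\chi_j$, the observation that the far commutator collapses to $g_j\,J^s\bigl(f(1-\chi_j)\bigr)$ by disjointness of supports, and the dyadic-shell refinement giving $\sum_j\|(\text{scale-}k)_j\|_{L^2}^2\lesssim 2^{-2ks}\|f\|_{L^2}^2$ with a convergent geometric series for $s>0$ all check out; in fact for $J^s$ (as opposed to $D^s$) the off-diagonal kernel decays exponentially, so one can even avoid the dyadic decomposition by dominating the far piece pointwise by $g_j\cdot(W*|f|)$ for a fixed integrable $W$ and invoking finite overlap once. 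The duality argument for the lower bound via the partition $\beta_j=g_j/\sum_ig_i^2$ is the standard route and is not circular, since the negative-order upper bound it requires reduces, after writing $h=J^sw$ with $w\in L^2$, to the positive-order commutator estimates you have already established.

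One technical point deserves care: the near-piece bound you state, with right-hand side $\|J^{s-1}(f\chi_j)\|_{L^2}+\|f\chi_j\|_{L^2}$, does not follow literally from \eqref{eq91}, because that estimate requires $p_3\in(1,\infty)$, so its second term reads $\|J^sg_j\|_{L^{p_3}}\|f\chi_j\|_{L^{p_4}}$ with $p_4>2$ or $p_4=\infty$, neither of which is square-summable in $j$. The bound you want is instead delivered by the refined commutator estimate of Lemma~\ref{dlkp}(b) (equivalently \eqref{kpdl}, in its $J^s$ form), taking $p_1=p_3=\infty$ and $p_2=p_4=2$, which is legitimate there and yields $\|[J^s;g_j](f\chi_j)\|_{L^2}\lesssim\|f\chi_j\|_{L^2}+\|J^{s-1}(f\chi_j)\|_{L^2}$ with constants uniform in $j$ because $g_j$ is a translate of a fixed $C_0^\infty$ bump. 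With that substitution of reference, and the induction on $\lfloor s\rfloor$ you already indicate for $s>1$, the argument closes.
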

Hence our first goal in establishing the Local well-posedness of (\ref{HoP1}), will start off in obtain Strichartz estimates  associated to solutions of 
\begin{equation}\label{eqta}
\partial_{t}u-D_{x}^{1+\alpha}\partial_{x}u=F.
\end{equation}
\begin{prop}\label{eq50}
	Assume that $0<\alpha<1,\, T>0$  and $ \sigma\in [0,1].$  Let $u$ be a smooth solution  to (\ref{eqta}) defined on the time interval $[0,T].$  Then there exist $0\leq\mu_{1},\mu_{2}<1/2$ such that
	\begin{equation}\label{AA.2}
	\|\partial_{x}u\|_{L^{2}_{T}L^{\infty}_{x}}\lesssim T^{\mu_{1}}\left\|J^{1-\frac{\alpha}{4}+\frac{\sigma}{4}+\varepsilon} u\right\|_{L^{\infty}_{T}L^{2}_{x}} +T^{\mu_{2}}\left\|J^{1-\frac{\alpha}{4}-\frac{3\sigma}{4}+\varepsilon}F\right\|_{L^{2}_{T}L^{2}_{x}}
	\end{equation} 
	for any $\varepsilon>0.$
	\end{prop}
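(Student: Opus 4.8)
The plan is to run the refined Strichartz machinery of Koch and Tzvetkov, adapted to the group $S(t)=\ex^{tD_{x}^{\alpha+1}\partial_{x}}$. First I would perform a Littlewood--Paley decomposition $u=\sum_{N}P_{N}u$ over dyadic frequencies $N=2^{k}$, so that it suffices to bound $\|\partial_{x}P_{N}u\|_{L^{2}_{T}L^{\infty}_{x}}$ and then sum in $N$. Since $\partial_{x}P_{N}u$ is frequency localized, Bernstein's inequality lets me trade the derivative for powers of $N$: on each block $\|\partial_{x}P_{N}w\|_{L^{\infty}_{x}}\lesssim N^{1-\frac{\alpha}{4}}\|D_{x}^{\alpha/4}P_{N}w\|_{L^{\infty}_{x}}$, which is exactly the quantity controlled by the endpoint Strichartz estimate \eqref{strichartz} with $(p,q)=(\infty,4)$.

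The core step is the frequency-dependent time slicing. I would split $[0,T]$ into $\sim TN^{\sigma}$ subintervals $I_{j}$ of length $\delta_{N}=N^{-\sigma}$, writing Duhamel's formula on each $I_{j}$ from its left endpoint $a_{j}$:
\[
P_{N}u(t)=S(t-a_{j})P_{N}u(a_{j})+\int_{a_{j}}^{t}S(t-t')P_{N}F(t')\,\mathrm{d}t',\qquad t\in I_{j}.
\]
On $I_{j}$ the homogeneous term is handled directly by \eqref{strichartz}, and the forcing term by Minkowski's integral inequality combined with \eqref{strichartz}, producing $\int_{I_{j}}\|P_{N}F(t')\|_{L^{2}_{x}}\,\mathrm{d}t'\lesssim |I_{j}|^{1/2}\|P_{N}F\|_{L^{2}_{I_{j}}L^{2}_{x}}$. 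Passing from the Strichartz $L^{4}_{t}$-norm to the desired $L^{2}_{t}$-norm on $I_{j}$ costs a factor $|I_{j}|^{1/4}$ by H\"older. Collecting the powers of $N$ and $\delta_{N}$ yields on each slice
\[
\|\partial_{x}P_{N}u\|_{L^{2}_{I_{j}}L^{\infty}_{x}}\lesssim N^{1-\frac{\alpha}{4}}\,\delta_{N}^{1/4}\Big(\|P_{N}u(a_{j})\|_{L^{2}_{x}}+\delta_{N}^{1/2}\|P_{N}F\|_{L^{2}_{I_{j}}L^{2}_{x}}\Big).
\]

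Squaring and summing over the $\sim TN^{\sigma}$ subintervals is where the two derivative counts appear. For the forcing contribution the slices reassemble into the single norm $\|P_{N}F\|_{L^{2}_{T}L^{2}_{x}}$, and the surviving prefactor $N^{1-\frac{\alpha}{4}}\delta_{N}^{3/4}=N^{1-\frac{\alpha}{4}-\frac{3\sigma}{4}}$ gives precisely the $J^{1-\frac{\alpha}{4}-\frac{3\sigma}{4}}F$ count. For the homogeneous contribution I would use that $S(t)$ is unitary on $L^{2}$ and commutes with $P_{N}$, so each sampled norm $\|P_{N}u(a_{j})\|_{L^{2}_{x}}$ is controlled by $\|P_{N}u\|_{L^{\infty}_{T}L^{2}_{x}}$; the number of slices contributes $TN^{\sigma}$, and together with $\delta_{N}^{1/2}=N^{-\sigma/2}$ this produces the prefactor $N^{1-\frac{\alpha}{4}+\frac{\sigma}{4}}$, matching the $J^{1-\frac{\alpha}{4}+\frac{\sigma}{4}}u$ count, while the powers of $T$ collected at this stage are tracked so as to remain below $1/2$, giving the admissible $\mu_{1},\mu_{2}$. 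Finally I would sum the dyadic estimates in $N$; since the $L^{\infty}_{x}$ norm only obeys the triangle inequality in $N$, I would insert the extra $N^{\varepsilon}$ and apply Cauchy--Schwarz against the convergent weight $\sum_{N}N^{-2\varepsilon}<\infty$, which is exactly the source of the $\varepsilon$-loss in the derivative indices. The low-frequency part (frequencies $\lesssim 1$) is disposed of separately by Bernstein and the trivial energy bound.

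The main obstacle will be the bookkeeping in the summation over slices and frequencies: one must choose $\delta_{N}=N^{-\sigma}$ so that the \emph{same} decomposition yields both the $+\frac{\sigma}{4}$ gain on the $u$-term and the $-\frac{3\sigma}{4}$ gain on the $F$-term, verify that the range $\sigma\in[0,1]$ is exactly what keeps all exponents in the allowed regime, and control the powers of $T$ so that $\mu_{1},\mu_{2}<1/2$. The forcing term requires care because its slice contributions must recombine into a clean $L^{2}_{T}L^{2}_{x}$ norm rather than an $L^{1}_{T}$ norm, which is why the H\"older exponent $\tfrac14$ from the time-norm conversion and the $|I_{j}|^{1/2}$ factor from the Duhamel estimate have to be balanced precisely.
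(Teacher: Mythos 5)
Your proposal follows essentially the same route as the paper: this is the Koch--Tzvetkov refined Strichartz argument, with a Littlewood--Paley decomposition, time slices of frequency-dependent length, Duhamel on each slice, the group Strichartz estimate \eqref{strichartz}, and H\"older in time to pass to $L^{2}_{T}$. The only substantive difference is cosmetic: you use the endpoint pair $(p,q)=(\infty,4)$ together with Bernstein and a direct dyadic summation with $N^{\pm\varepsilon}$ weights, whereas the paper works with a large finite $p$, recovers $L^{\infty}_{x}$ via the Sobolev embedding $\|f\|_{L^{\infty}_{x}}\lesssim\|J^{\varepsilon}f\|_{L^{p}_{x}}$, and lets the $\varepsilon$-loss enter through the exponent $\frac{\alpha-\sigma}{2p}$; both devices produce the same $\varepsilon$ in the final indices.

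One quantitative point does not come out as you state it. With slices of length $\delta_{N}=N^{-\sigma}$ the number of slices is $\sim TN^{\sigma}$, so the homogeneous contribution carries the factor $(TN^{\sigma})^{1/2}\delta_{N}^{1/4}N^{1-\frac{\alpha}{4}}=T^{1/2}N^{1-\frac{\alpha}{4}+\frac{\sigma}{4}}$, i.e.\ $\mu_{1}=\tfrac12$, which violates the strict inequality $\mu_{1}<\tfrac12$ demanded by the statement. You assert that the powers of $T$ are ``tracked so as to remain below $1/2$,'' but your explicit choice of $\delta_{N}$ does not achieve this. The paper avoids the problem by taking slices of length $\sim 2^{-k\sigma}T^{\mu}$ with $\mu=\tfrac12$ (hence $\sim 2^{k\sigma}T^{1-\mu}$ of them), which redistributes the powers of $T$ and yields $\mu_{1}=\tfrac12-\tfrac{\mu}{q}$ and $\mu_{2}=\mu(1-\tfrac1q)$, both strictly below $\tfrac12$. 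This is a one-line fix to your slicing, not a conceptual flaw, but as written the exponent claim is not established.
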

	\begin{rem}
		The optimal choice in the parameters present in the  estimate (\ref{AA.2})  corresponds to
		$\sigma=\frac{1-\alpha}{2}.$ Indeed,  
		as is pointed out by Kenig and Koenig in the case of the Benjamin-Ono equation (case $\alpha=0$) (see  Remarks in Proposition 2.8 \cite{KK}) given a linear estimate  of the form 
		\begin{equation*}
		\|\partial_{x}u\|_{L^{2}_{T}L^{\infty}_{x}}\lesssim T^{\mu_{1}}\left\|J^{a} u\right\|_{L^{\infty}_{T}L^{2}_{x}} +T^{\mu_{2}}\left\|J^{b}F\right\|_{L^{2}_{T}L^{2}_{x}}
		\end{equation*}
	 the idea  is to apply the smoothing effect  (\ref{eq101}) and absorb  as many as derivatives as possible  of the function $F.$  Concerning to our case, the approach   requires the choice $a=b+\frac{1-\alpha}{2};$	  
		  this particular choice,  $\sigma=\frac{1-\alpha}{2},$  in the estimate (\ref{AA.2}) provides  the regularity  $s>9/8-3\alpha/8$ in  Theorem \ref{B.1}.
	\end{rem}
\begin{proof}
		Let   $f=\sum_{k}f_{k}$ denote  the Littlewood-Paley decomposition of a function  $f.$ More precisely  we choose functions  $\eta, \chi \in C^{\infty}(\mathbb{R})$  with 
	$\supp(\eta) \subseteq \{\xi: 1/2<|\xi|<2\} $  and $\supp(\chi)\subseteq \{\xi: |\xi|<2\},$ such that 
	\begin{equation*}
	\sum_{k=1}^{\infty}\eta(\xi/2^{k})+\chi(\xi)=1
	\end{equation*}
	and  $f_{k}=P_{k}(f),$ where $\widehat{(P_{0}f)}(\xi)=\chi(\xi)\widehat{f}(\xi)$	  and        ${\displaystyle \widehat{(P_{k}f)}(\xi)=\eta(\xi/2^{k})\widehat{f}(\xi)}$  for all $k\geq 1.$ 
%

	  	Fix $\varepsilon>0.$ Let $p>\frac{1}{\varepsilon}.$
	By Sobolev embedding  and Littlewood-Paley Theorem  it follows that
		\begin{equation*}
		\begin{split}
	\|f\|_{L^{\infty}_{x}}&\lesssim\left\|J^{\varepsilon}f\right\|_{L^{p}_{x}}\sim \left\|\left(\sum_{k=0}^{\infty}\left|J^{\varepsilon}P_{k}f\right|^{2}\right)^{1/2} \right\|_{L^{p}_{x}}=\left\|\sum_{k=0}^{\infty}\left|J^{\varepsilon}P_{k}f\right|^{2}\right\|_{L^{p/2}_{x}}^{1/2}\lesssim \left(\sum_{k=0}^{\infty}\left\|J^{\varepsilon}P_{k}f\right\|_{L^{p}_{x}}^{2}\right)^{1/2}.
	\end{split}
	\end{equation*}  
	  Therefore, to obtain (\ref{AA.2}) it enough to prove that for $p>2$
	  \begin{equation*}
	  \|\partial_{x}u_{k}\|_{L^{2}_{T}L^{p}_{x}}\lesssim \left\|D_{x}^{1-\frac{\alpha}{4}+\frac{\sigma}{4}+\frac{\alpha-\sigma}{2p}}u_{k}\right\|_{L^{\infty}_{T}L^{2}_{x}}+ \left\|D_{x}^{1-\frac{\alpha}{4}-\frac{3\sigma}{4}+\frac{\alpha-\sigma}{2p}}F_{k}\right\|_{L^{2}_{T}L^{2}_{x}},\quad k\geq 1.
	  \end{equation*}
	The estimate for the case $k=0$ follows using H\"{o}lder's inequality and  (\ref{strichartz}). For such reason we fix $k\geq 1,$ and at  these level of frequencies  we have that
	\begin{equation*}
	\partial_{t}u_{k}-D^{\alpha+1}_{x}\partial_{x}u_{k}=F_{k}.
	\end{equation*}
	Consider a partition of the interval $[0,T]=\bigcup_{j\in J}I_{j}$ where $I_{j}=[a_{j},b_{j}],$ and $T=b_{j}$ for some $j.$ Indeed,  we  choose a quantity $\sim 2^{k\sigma}T^{1-\mu}$ of intervals,  with  length $|I_{j}|\sim 2^{-k\sigma}T^{\mu}, $ where $\mu$ is a positive number to be fixed.  
	
	Let $q$ be such that
	$$\frac{2}{q}+\frac{1}{p}=\frac{1}{2}.$$
	
	Using that $u$  solves the integral equation 
	\begin{equation}
u(t)=S(t)u_{0}+\int_{0}^{t}S(t-t')F(t')\,\mathrm{d}t',
\end{equation}	
we deduce that 
	 \begin{equation*}
	 \begin{split}
	 &\|\partial_{x}u_{k}\|_{L^{2}_{T}L^{p}_{x}}\\
&\lesssim \left(T^{\mu}2^{-k\sigma}\right)^{\left(\frac{1}{2}-\frac{1}{q}\right)}\left(\sum_{j\in J}\left\|S(t-a_{j})\partial_{x}u_{k}(a_{j})\right\|_{L^{q}_{I_{j}}L^{p}_{x}}^{2} +\left\|\int_{a_{j}}^{t} S(t-s) \partial_{x}F_{k}(s)\mathrm{d}s\right\|_{L^{q}_{I_{j}}L^{p}_{x}}^{2}\right)^{1/2}.
\end{split}
\end{equation*}
In this sense, it follows from \eqref{strichartz} that
\begin{equation*}
\begin{split}	
&\|\partial_{x}u_{k}\|_{L^{2}_{T}L^{p}_{x}}\\
 &\lesssim \left(T^{\mu}2^{-k\sigma}\right)^{\left(\frac{1}{2}-\frac{1}{q}\right)}\left\{\sum_{j\in J}\left\|D_{x}^{-\frac{\alpha}{q}}\partial_{x}u_{k}(a_{j})\right\|_{L^{2}_{x}}^{2}+\sum_{j\in J}\left(\int_{I_{j}}\left\|D_{x}^{-\frac{\alpha}{q}}\partial_{x}F_{k}(t)\right\|_{L^{2}_{x}}\mathrm{d}t\right)^{2} \right\}^{1/2}\\
&\lesssim \left(T^{\mu}2^{-k\sigma}\right)^{\left(\frac{1}{2}-\frac{1}{q}\right)}\left\{\left(\sum_{j\in J}\left\|D_{x}^{1-\frac{\alpha}{q}}u_{k}\right\|_{L^{\infty}_{T}L^{2}_{x}}^{2}\right)^{1/2} +\left(\sum_{j\in J}T^{\mu}2^{-k\sigma}\int_{I_{j}}\left\|D_{x}^{1-\frac{\alpha}{q}}F_{k}(t)\right\|_{L^{2}_{x}}^{2}\,\mathrm{d}t\right)^{1/2}\right\}\\
&\lesssim \left(T^{\mu}2^{-k\sigma}\right)^{\left(\frac{1}{2}-\frac{1}{q}\right)}\left(T^{1-\mu}2^{k\sigma}\right)^{1/2}\left\|D_{x}^{1-\frac{\alpha}{q}}u_{k}\right\|_{L^{\infty}_{T} L^{2}_{x}}\\
&\quad +\left(T^{\mu}2^{-k\sigma}\right)^{\left(\frac{1}{2}-\frac{1}{q}\right)}\left(T^{\mu}2^{-k\sigma}\right)^{1/2}\left(\int_{0}^{T}\left\|D_{x}^{1-\frac{\alpha}{q}}F_{k}(t)\right\|_{L^{2}_{x}}^{2}\mathrm{d}t\right)^{1/2}\\
&\lesssim T^{1/2 -\mu/q}\left\|D_{x}^{1-\frac{\alpha}{q}+\frac{\sigma}{q}}u_{k}\right\|_{L^{\infty}_{T} L^{2}_{x}}+T^{\mu(1-1/q)}\left\|D_{x}^{1-\frac{\alpha}{q}+\frac{\sigma}{q}-\sigma}F_{k}\right\|_{L^{2}_{T}L^{2}_{x}}.
\end{split}
\end{equation*}
Since,
$1-\frac{\alpha}{q}+\frac{\sigma}{q}=1-\frac{\alpha}{4}+\frac{\sigma}{4}+\frac{\alpha-\sigma}{2p}$  and $1-\frac{\alpha}{q}+\frac{\sigma}{q}-\sigma=1-\frac{\alpha}{4}-\frac{3\sigma}{4}+\frac{\alpha-\sigma}{2p}.$  We recall that  $\varepsilon>\frac{1}{p},$\,  $\sigma\in [0,1]$ and $\alpha\in (0,1),$ then $\varepsilon+\frac{\alpha-\sigma}{2p}>\frac{\alpha-\sigma+2}{2p}>0.$ Next, we choose $\mu_{1}=\frac{1}{2}-\frac{\mu}{q},\, \mu_{2}=\mu(1-\frac{1}{q})$  with the particular choice  $\mu=1/2.$

Gathering the inequalities above follows the proposition.
\end{proof}
Now we turn our attention to the proof of Theorem  \ref{B.1}. Our starting point will be the energy estimate \eqref{eq295}, that as was remarked above, the key point is to establish  a priori control  of $\|\partial_{x}u\|_{L^{1}_{T}L^{\infty}_{x}}.$
\section{Proof of Theorem \ref{B.1}}\label{proof1}
\subsection{A priori estimates} \label{seccion1.2}
First notice that by scaling, it is enough to  deal with  small initial data  in the $H^{s}-$norm.   Indeed, if $u(x,t)$ is a solution of (\ref{eq7}) defined on a time interval $[0,T],$ for some positive time $T,$ then for all  $\lambda>0,\, u_{\lambda}(x,t)=\lambda^{1+\alpha}u(\lambda x,\lambda^{2+\alpha}t)$ is also solution with initial data $u_{0,\lambda}(x)=\lambda^{1+\alpha}u_{0}(\lambda x),$  and    time interval  $\left[0,T/\lambda^{2+\alpha}\right].$

For any $\delta>0,$ we define $B_{\delta}(0)$ as the ball with center at the origin in $H^{s}(\mathbb{R})$ and radius $\delta.$

Since 
\begin{equation*}
 \|u_{0,\lambda}\|_{L^{2}_{x}}=\lambda^{\frac{1+2\alpha}{2}}\|u_{0}\|_{L^{2}_{x}}
\quad  \mbox{and} \quad\left\|D_{x}^{s}u_{0,\lambda}\right\|_{L^{2}_{x}}=\lambda^{\frac{1}{2}+\alpha+s}\left\|D_{x}^{s}u_{0}\right\|_{L^{2}_{x}},
\end{equation*}
then 
$$ \|u_{0,\lambda}\|_{H^{s}_{x}}\lesssim \lambda^{\frac{1}{2}+\alpha}(1+\lambda^{s})\|u_{0}\|_{H^{s}_{x}},$$
so we can force   $u_{\lambda}(\cdot,0)$ to belong to the ball $B_{\delta}(0)$ by choosing
the parameter $\lambda$ with the condition
\begin{equation*}
	\lambda\sim\min\left\{\delta^{\frac{2}{1+2\alpha}}\|u_{0}\|_{H^{s}_{x}}^{-\frac{2}{1+2\alpha}},1\right\}.
\end{equation*}
Thus, the existence  and uniqueness of a solution to (\ref{eq7}) on the time interval $[0,1]$ for small initial data $\|u_{0}\|_{H^{s}_{x}}$ will ensure  the existence  and uniqueness  of a solution  to (\ref{eq7}) for arbitrary large  initial data on a time interval $[0,T]$  with
\begin{equation*}
	T\sim \min\left\{1,\|u_{0}\|_{H^{s}_{x}}^{-\frac{2(2+\alpha)}{1+2\alpha}}\right\}.
\end{equation*}
Thus, without loss of generality  we will assume that $T\leq 1,$  and that  
$${\displaystyle 
	\Lambda:=\|u_{0}\|_{L^{2}}+\|D^{s}u_{0}\|_{L^{2}}\leq \delta,}$$
where  $\delta$ is a small positive number to be fixed later.

We fix $s$ such  that  $s(\alpha)=\frac{9}{8}-\frac{3\alpha}{8}<s<\frac{3}{2}-\frac{\alpha}{2}$  and set 
$\varepsilon=s-s(\alpha)>0.$ 

Next,  taking    $\sigma=\frac{1-\alpha}{2}>0,$\,   $F=-u\partial_{x}u$ in   \eqref{AA.2}  together with \eqref{eq295} yields
\begin{equation}\label{eqta2}
\begin{split}
\|\partial_{x}u\|_{L^{2}_{T}L^{\infty}_{x}}&\lesssim T^{\mu_{1}} \|J^{s} u\|_{L^{\infty}_{T}L^{2}_{x}} +T^{\mu_{2}}\left\|J^{1-\frac{\alpha}{4}-\frac{3\sigma}{4}+\varepsilon}(u\partial_{x}u)\right\|_{L^{2}_{T}L^{2}_{x}}\\
&\lesssim \Lambda+\Lambda\ex^{c\|\partial_{x}u\|_{L^{2}_{T}L_{x}^{\infty}}}+\left\|D_{x}^{s+\frac{\alpha-1}{2}}(u\partial_{x}u)\right\|_{L_{T}^{2}L_{x}^{2}}.
\end{split}
\end{equation}
Now, to analyze the product  coming from the nonlinear term  we use the Leibniz  rule for fractional derivatives \eqref{eq33} joint with the energy estimate \eqref{eq295} as follows
\begin{equation}\label{eqta3}
\begin{split}
\left\|D_{x}^{s+\frac{\alpha-1}{2}}(u\partial_{x}u)\right\|_{L_{T}^{2}L_{x}^{2}}&\lesssim \left\|u D_{x}^{s+\frac{\alpha-1}{2}}\partial_{x}u\right\|_{L_{T}^{2}L_{x}^{2}}+\left\| \|\partial_{x}u(t)\|_{L_{x}^{\infty}}\left\|D_{x}^{s+\frac{\alpha-1}{2}}u(t)\right\|_{L_{x}^{2}}\right\|_{L_{T}^{2}}\\
& \lesssim \left\|u D_{x}^{s+\frac{\alpha-1}{2}}\partial_{x}u\right\|_{L_{T}^{2}L_{x}^{2}}+\Lambda\|\partial_{x}u\|_{L^{2}_{T}L^{\infty}_{x}}\ex^{c\|\partial_{x}u\|_{L_{T}^{2}L_{x}^{\infty}}}.
\end{split}
\end{equation}
To handle the first term in the right hand side above, we incorporate  Kato's smoothing effect estimate  obtained in (\ref{eq101}) in the following way 
\begin{equation}\label{eqta4}
\begin{split}
\left\|uD_{x}^{s+\frac{\alpha-1}{2}}\partial_{x}u\right\|_{L_{T}^{2}L_{x}^{2}} 
&\leq\left(\sum_{j=-\infty}^{\infty}\int_{0}^{T}\|u(t)\|_{L^{\infty}_{[j,j+1)}}^{2}\left\|D_{x}^{s+\frac{\alpha+1}{2}}\mathcal{H}u(t)\right\|_{L_{[j,j+1)}^{2}}^{2}\mathrm{d}t\right)^{1/2}\\
&\lesssim \left(\sum_{j=-\infty}^{\infty}\|u\|_{L_{T}^{\infty}L_{[j, j+1)}^{\infty}}^{2}\right)^{1/2}\left(1+\Lambda\right)\Lambda \ex^{c\|\partial_{x}u\|_{L_{T}^{2}L_{x}^{\infty}}}
\end{split}
\end{equation} 
In summary, after  gathering the estimates (\ref{eqta2})-(\ref{eqta4}) yields
\begin{equation}\label{eqta5}
\begin{split}
\|\partial_{x}u\|_{L^{2}_{T}L^{\infty}_{x}}&\lesssim
\Lambda(1+\Lambda)\ex^{c\|\partial_{x}u\|_{L^{2}_{T}L_{x}^{\infty}}}\left(\sum_{j=-\infty}^{\infty}\|u\|_{L_{T}^{\infty}L_{[j, j+1)}^{\infty}}^{2}\right)^{1/2}\\
&\quad + \Lambda +\Lambda\ex^{c\|\partial_{x}u\|_{L^{2}_{T}L_{x}^{\infty}}}.
\end{split}
\end{equation}
Since $u$ is a solution to (\ref{HoP1}), then by 
 Duhamel's formula it follows that
\begin{equation*}
u(t)=S(t)u_{0}-\int_{0}^{t}S(t-s)\left(u\partial_{x}u\right)(s)\,\mathrm{d}s
\end{equation*}
where $S(t)=\ex^{tD_{x}^{\alpha+1}\partial_{x}}.$

Now, we  fix $\eta>0$ such that  $\eta<\frac{1+\alpha}{8};$ this choice implies that $\eta+\frac{1}{2}<s+\frac{\alpha-1}{2}.$  Hence, Sobolev's embedding, H\"{o}lder's inequality and  Corollary  \ref{cor1.1} produce
\begin{equation}\label{eqta6}
\begin{split}
&\left(\sum_{j=-\infty}^{\infty}\|u\|_{L_{T}^{\infty}L_{[j, j+1)}^{\infty}}^{2}\right)^{1/2}\\  
&\lesssim \left(\sum_{j=-\infty}^{\infty} \left\|S(t)u_{0}\right\|_{L_{T}^{\infty}L_{[j, j+1)}^{\infty}}^{2}\right)^{1/2}+ \left(\sum_{j=-\infty}^{\infty}\left\|\int_{0}^{t}S(t-s)(u\partial_{x}u)(s)\mathrm{d}s\right\|_{L_{T}^{\infty}L_{[j, j+1)}^{\infty}}^{2}\right)^{1/2}\\
&\lesssim (1+T)\Lambda+(1+T)\|u\partial_{x}u\|_{L_{T}^{1}H_{x}^{\eta+1/2}}\\
&\lesssim \Lambda+\left\|u\partial_{x}u\right\|_{L_{T}^{1}L_{x}^{2}}+\left\|D_{x}^{\eta+1/2}(u\partial_{x}u)\right\|_{L_{T}^{1}L^{2}_{x}}\\
&\lesssim \Lambda+\Lambda\|\partial_{x}u\|_{L^{2}_{T}L_{x}^{\infty}}+\left\|D_{x}^{\eta+1/2}(u\partial_{x}u)\right\|_{L^{2}_{T}L_{x}^{2}}.
\end{split}
\end{equation}
 Employing an  argument similar to the one  applied in (\ref{eqta3}) and  (\ref{eqta5})  it is  possible to bound the last term in the right hand side as follows
 \begin{equation}\label{eqta7}
 \begin{split}
 \left\|D_{x}^{\eta+1/2}(u\partial_{x}u)\right\|_{L^{2}_{T}L_{x}^{2}}&\lesssim \left(\sum_{j=-\infty}^{\infty} \|u\|_{L_{T}^{\infty}L_{[j,j+1)}^{\infty}}^{2}\right)^{1/2}\Lambda(\Lambda+1)\ex^{c\|\partial_{x}u\|_{L_{T}^{2}L_{x}^{\infty}}}\\
 &\quad + \Lambda\ex^{c\|\partial_{x}u\|_{L_{T}^{2}L_{x}^{\infty}}}.
 \end{split}
 \end{equation}
Next, 
we define 
\begin{equation*}
\phi(T)=\left(\int_{0}^{T}\left\|\partial_{x}u(s)\right\|_{L_{x}^{\infty}}^{2}\,\mathrm{d}s\right)^{1/2}+\left(\sum_{j=-\infty}^{\infty}\|u\|_{L_{T}^{\infty}L_{[j, j+1)}^{\infty}}^{2}\right)^{1/2}
\end{equation*}
which is a continuous, non-decreasing  function of $T$.

From obtained in (\ref{eqta5}), (\ref{eqta6}) and (\ref{eqta7}) follows that
\begin{equation*}
\begin{split}
\phi(T)&\lesssim  \Lambda(\Lambda+1)\phi(T)\ex^{c \,\phi(T)}+\Lambda\ex^{c\,\phi(T)}\phi(T)+\Lambda \ex^{c\,\phi(T)}+\Lambda+\Lambda\phi(T).
\end{split}
\end{equation*}
Now, if we suppose that $\Lambda\leq \delta\leq 1$ we obtain 
\begin{equation*}
\phi(T)\leq c\Lambda+c\Lambda\ex^{c\phi(T)}
\end{equation*}
for some constant $c>0.$

To complete the proof
we will show that  there exists $\delta>0,$ such that  $\Lambda \leq \delta,$    then  
$\phi(1)\leq A,$ \; for  some constant $A>0.$

To do this, we define the function 
 \begin{equation}\label{eq32}
\Psi(x,y)=x-cy-cy\ex^{cx}.
\end{equation}
First notice that  $\Psi(0,0)=0$\, and $\partial_{x}\Psi(0,0)=1.$
Then the  Implicit Function Theorem asserts that  there exists $\delta>0,$  and a  smooth function $\xi(y)$ such that  $\xi(0)=0,$ and $\Psi(\xi(y),y)=0$\, for  \,$|y|\leq \delta.$ 

Notice that the condition $\Psi(\xi(y),y)=0$ implies that $\xi(y)>0$ for  $y>0.$ Moreover, since  $\partial_{x}\Psi(0,0)=1,$ then the function $\Psi(\cdot,y)$ is increasing close to  $\xi(y),$ whenever $\delta$  is chosen sufficiently small.

Let us suppose that  $\Lambda\leq \delta,$ and set  $\lambda=\xi(\Lambda).$ 
Then, 
combining interpolation and  Proposition \ref{propo3} we obtain 
\begin{equation*}
\begin{split}
\phi(0)&=\left(\sum_{j=-\infty}^{\infty} \left(\sup_{x\in [j,j+1)}|u(x,0)|\right)^{2}\right)^{1/2} 
\lesssim \|u_{0}\|_{H^{s}(\mathbb{R})}\leq c_{1}\|u_{0}\|_{L^{2}}+c_{1}\|D_{x}^{s}u_{0}\|_{L_{x}^{2}}
\end{split}
\end{equation*}
where  we take $c>c_{1}.$  

Therefore 
\begin{equation*}
\phi(0)\leq c_{1}\Lambda<c\Lambda+c\Lambda\ex^{c\xi(\Lambda)}=\lambda.
\end{equation*}
Suppose that  $\phi(T)>\lambda$ \, for some $T\in (0,1)$ and define
\begin{equation*}
 T_{0}=\inf\left\{T\in (0,1)\,|\,\phi(T)>\lambda\right\}.
  \end{equation*}
   Hence, $T_{0}>0$ and $ \phi(T_{0})=\lambda,$ besides, there exists a decreasing sequence $\{T_{n}\}_{n\geq 1}$  converging to $T_{0}$ such that $\phi(T_{n})>\lambda.$   In addition,  notice that  \eqref{eq32} implies   $\Psi(\phi(T), \Lambda)\leq 0$  for all $T\in [0,1].$ 

Since the function  $\Psi(\cdot,\Lambda)$ is  increasing  near $\lambda$ it  implies  that 
$$\Psi(\phi(T_{n}),\Lambda)>\Psi(\phi(T_{0}),\Lambda)=\Psi(\lambda,\Lambda)=\Psi(\xi(\Lambda),\Lambda)=0,$$ for $n$ sufficiently large.

This is a contradiction with the fact that $\phi(T)>\lambda$. So we conclude $\phi(T)\leq A$ for all $T\in (0,1),$ as was claimed. Thus,  $\phi(1)\leq A.$

In conclusion we have proved that
\begin{equation}\label{eq1.15}
\phi(T)=\left(\int_{0}^{T}\left\|\partial_{x}u(s)\right\|_{L_{x}^{\infty}}^{2}\,\mathrm{d}s\right)^{1/2}+\left(\sum_{j=-\infty}^{\infty}\|u\|_{L_{T}^{\infty}L_{[j, j+1)}^{\infty}}^{2}\right)^{1/2}\lesssim \|u_{0}\|_{H^{s}_{x}},\quad \forall T\in[0,1].
\end{equation}
At this stage, the existence, uniqueness, and  continuous dependence  on the initial data  follows from  the standard  compactness and Bona-Smith approximation  arguments (see for example \cite{KPV3} and  \cite{Ponce1}).

\section{Proof of Theorem \ref{A}}\label{ppgrg}
The aim of this section is to prove  Theorem \ref{A}. To achieve this goal is necessary  to take into account  two important aspects  of our analysis. First,  the ambient space, that in our case  is the   Sobolev space where the theorem is valid together with the properties satisfied by the real  solutions of the dispersive generalized Benjamin-Ono  equation. In second place, the auxiliaries weights functions  involved in the energy estimates that  we will describe in detail.  

The following is a summary of the local well-posedness and  Kato's smoothing effect presented in the previous sections.
\begin{thmx}\label{C}
	If $u_{0}\in H^{s}(\mathbb{R}),$\, $s\geq\frac{3-\alpha}{2},$\, $ \alpha\in(0,1),$ then there exist a positive time  \linebreak 
	$T=T(\|u_{0}\|_{H^{s}})>0$ and a unique  solution of the IVP \eqref{eq7} such that
	\begin{itemize}
	\item[(a)] $u\in C([-T,T]: H^{s}(\mathbb{R})),$
	\item[(b)] $\partial_{x}u\in L^{1}([-T,T]: L^{\infty}(\mathbb{R})),$ \qquad (Strichartz),
	\item[(c)] Smoothing effect: for $R>0,$
		\begin{equation}\label{t1}
	\int_{-T}^{T}\int_{-R}^{R} \left(\left|\partial_{x}D_{x}^{r+\frac{\alpha+1}{2}}u\right|^{2}+\left|\mathcal{H}\partial_{x}D_{x}^{r+\frac{\alpha+1}{2}}u\right|^{2}\right)\,\mathrm{d}x\,\mathrm{d}t\leq C \quad 
	\end{equation} 
with\quad  $r\in \left(\frac{9-3\alpha}{8},s\right]$ and  $C=C(\alpha;R;T;\|u_{0}\|_{H_{x}^{s}})>0.$ 
		\end{itemize}
\end{thmx}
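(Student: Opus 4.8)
The plan is to read Theorem \ref{C} as an assembly of the local theory of Theorem \ref{B.1} and the Kato smoothing effect of Proposition \ref{sm1}, so that no new estimate is required; the only work is to transfer those statements to the two-sided time interval and to the rough $H^{s}$ solution. First I would observe that the hypothesis $s\geq\frac{3-\alpha}{2}$ lies strictly above the threshold $s(\alpha)=\frac{9-3\alpha}{8}$ of Theorem \ref{B.1}, since $\frac{3-\alpha}{2}-\frac{9-3\alpha}{8}=\frac{3-\alpha}{8}>0$. Hence Theorem \ref{B.1} produces, for data $u_{0}\in H^{s}(\mathbb{R})$, a unique solution $u\in C([0,T]:H^{s}(\mathbb{R}))$ with $\partial_{x}u\in L^{1}([0,T]:L^{\infty}(\mathbb{R}))$; these are precisely items (a) and (b) on $[0,T]$. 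To reach the symmetric interval $[-T,T]$ I would use the reflection symmetry of \eqref{eq7}: if $u$ solves \eqref{eq7} then so does $w(x,t)=u(-x,-t)$, so applying Theorem \ref{B.1} to the reflected data $u_{0}(-\cdot)$ and reflecting the resulting solution gives a solution on $[-T,0]$ with the same $H^{s}$-norm, hence the same existence time. Uniqueness lets the two pieces glue to a solution on $[-T,T]$.

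For item (c) the key point is that, up to the identities $\partial_{x}=-\mathcal{H}D_{x}$ and $\mathcal{H}^{2}=-I$ (and the commutation of $\mathcal{H}$ with $D_{x}^{\beta}$), the integrand of \eqref{t1} is exactly the pair $|D_{x}^{\beta}u|^{2}+|\mathcal{H}D_{x}^{\beta}u|^{2}$ occurring in \eqref{eq101}: one has $\partial_{x}D_{x}^{r+\frac{\alpha+1}{2}}u=-\mathcal{H}D_{x}^{\beta}u$ and $\mathcal{H}\partial_{x}D_{x}^{r+\frac{\alpha+1}{2}}u=D_{x}^{\beta}u$ with $\beta=(r+1)+\frac{\alpha+1}{2}$. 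I would therefore apply Proposition \ref{sm1} at the regularity level producing total order $\beta$, and then sum the localized estimates over the finitely many translates $\varphi_{j}$ whose derivatives dominate $\mathbf{1}_{[-R,R]}$; since $\varphi_{j}'\geq 0$ and $\mathbf{1}_{[-R,R]}(x)\lesssim\sum_{|j|\lesssim R}\varphi_{j}'(x)$, summing \eqref{eq101} over this finite range bounds the left side of \eqref{t1} on $[0,T]$, while the reflected solution handles $[-T,0]$. The right side of \eqref{eq101} is of the form $\bigl(1+T+\|\partial_{x}u\|_{L^{1}_{T}L^{\infty}_{x}}+T\|u\|_{L^{\infty}_{T}H^{r}_{x}}\bigr)^{1/2}\|u\|_{L^{\infty}_{T}H^{r+1}_{x}}$, every factor of which is finite and controlled by $\|u_{0}\|_{H^{s}}$ through items (a), (b) and the a priori bound \eqref{eq1.15}; this is what produces the constant $C=C(\alpha;R;T;\|u_{0}\|_{H^{s}})$.

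The main obstacle is that Proposition \ref{sm1}, hence \eqref{eq101}, is proved only for smooth solutions $u\in C([0,T]:H^{\infty}(\mathbb{R}))$, whereas Theorem \ref{C} concerns the $H^{s}$ solution of Theorem \ref{B.1}. I would close this gap by a Bona--Smith approximation: take smooth data $u_{0}^{n}\to u_{0}$ in $H^{s}(\mathbb{R})$, let $u^{n}$ be the corresponding smooth solutions, and apply \eqref{eq101} to each $u^{n}$. The crucial feature is that the constants are \emph{uniform} in $n$, because the right side of \eqref{eq101} depends on $u^{n}$ only through the quantities controlled in \eqref{eq295} and \eqref{eq1.15}, all bounded uniformly by $\sup_{n}\|u_{0}^{n}\|_{H^{s}}<\infty$. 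Passing to the limit via the continuous dependence of Theorem \ref{B.1} together with Fatou's lemma (equivalently, weak lower semicontinuity of the $L^{2}$-norm on the fixed slab $[-T,T]\times[-R,R]$) preserves the integral bound and yields \eqref{t1} for $u$. The one genuine bookkeeping subtlety, to be settled when fixing the smoothing index, is that the shifted order $\beta$ must be attained at a level whose controlling norm is still dominated by $\|u_{0}\|_{H^{s}}$; this is the only place where the admissible range of $r$ enters, and it is what must be checked to match the statement.
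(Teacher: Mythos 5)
Your overall strategy is precisely what the paper intends: Theorem \ref{C} is introduced in Section 6 only as ``a summary of the local well-posedness and Kato's smoothing effect presented in the previous sections,'' so the proof really is the assembly you describe --- Theorem \ref{B.1} for items (a) and (b), the reflection $w(x,t)=u(-x,-t)$ to reach negative times (this does solve \eqref{eq7}, since each of the three terms picks up the same factor $-1$ under that symmetry), the summation of \eqref{eq101} over the finitely many translates $\varphi_j'$ whose sum dominates $\mathbb{1}_{[-R,R]}$, and a Bona--Smith regularization with constants kept uniform through \eqref{eq295} and \eqref{eq1.15}. Those steps are all sound as you present them.

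The one point you flag at the end but do not settle is, however, exactly where your argument as written breaks. You convert the integrand of \eqref{t1} into $|D_x^{\beta}u|^{2}+|\mathcal{H}D_x^{\beta}u|^{2}$ with $\beta=(r+1)+\frac{\alpha+1}{2}$ and propose to apply Proposition \ref{sm1} at the level $s_{\mathrm{prop}}=r+1$; but then the right-hand side of \eqref{eq101} carries the factor $\|u\|_{L^{\infty}_{T}H^{r+1}_{x}}$, which is \emph{not} controlled by $\|u_{0}\|_{H^{s}}$ whenever $r>s-1$. For the minimal regularity $s=\frac{3-\alpha}{2}$ one has $s-1=\frac{1-\alpha}{2}=\frac{4-4\alpha}{8}<\frac{9-3\alpha}{8}$, so the \emph{entire} admissible range $r\in\left(\frac{9-3\alpha}{8},s\right]$ lies in this bad regime, and your assertion that ``every factor is finite and controlled by $\|u_{0}\|_{H^{s}}$'' fails there. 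The way out is to read the operator in \eqref{t1} as having total order $r+\frac{\alpha+1}{2}$ --- i.e.\ as $\mathcal{H}D_{x}^{r+\frac{\alpha+1}{2}}$, equivalently $\partial_{x}D_{x}^{r-1+\frac{\alpha+1}{2}}$, which is exactly the pair appearing in \eqref{eq101} at level $r$; this matches the expected local gain of $\frac{\alpha+1}{2}$ derivatives and the way the smoothing estimate is actually invoked in Section 6. With that reading you apply Proposition \ref{sm1} with its index equal to $r\le s$ (and auxiliary index, say, $3/4$, which is admissible since $\frac{9-3\alpha}{8}>\frac{1}{2}$), every controlling norm is dominated by $\|u\|_{L^{\infty}_{T}H^{s}_{x}}$ and $\|\partial_{x}u\|_{L^{1}_{T}L^{\infty}_{x}}$, hence by $\|u_{0}\|_{H^{s}}$ through \eqref{eq295} and \eqref{eq1.15}, and the remainder of your argument goes through unchanged.
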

Since we have set the Sobolev space where we will work, the next step is the description of the cutoff functions to be used in the proof.

In this part we consider   families of cutoff functions that will be used systematically  in the proof of Theorem \ref{A}. This collection of weights functions   were  constructed originally in \cite{ILP1} and \cite{KLPV} in the proof of Theorem  \ref{m7}.

More precisely,   for $\epsilon>0$ and $b\geq 5\epsilon$ define the  families of functions 
\begin{equation*}
\chi_{\epsilon,b},\;  \phi_{\epsilon,b},\; \widetilde{\phi_{\epsilon,b}},\; \psi_{\epsilon},\eta_{\epsilon,b}\in C^{\infty}(\mathbb{R})
\end{equation*}
satisfying the following properties:
\begin{enumerate}
	\item ${\displaystyle 
	 \chi_{\epsilon,b}'\geq 0,
}$	
\item ${\displaystyle 
\chi_{\epsilon,b}(x)=
\left\{
\begin{array}{ll}
0, & x\leq \epsilon \\
1, & x\geq b,
\end{array} 
\right. 
}$
\item ${\displaystyle  \supp(\chi_{\epsilon, b})\subseteq [\epsilon,\infty);}$
\item ${\displaystyle \chi_{\epsilon, b}'(x)\geq\frac{1}{10(b-\epsilon)}\mathbb{1}_{[2\epsilon,b-2\epsilon]}(x),}$
\item ${\displaystyle \supp(\chi_{\epsilon,b}') \subseteq [\epsilon,b];}$ 
\item There exists real numbers $c_{j}$ such that
\begin{equation*}
\left|\chi_{\epsilon, b}^{(j)}(x)\right|\leq c_{j}\chi_{\frac{\epsilon}{3}, b+\epsilon}'(x),\quad \forall x\in \mathbb{R},\,j\in \mathbb{Z}^{+}.
\end{equation*} 
\item For $x\in (3\epsilon,\infty)$ 
 \begin{equation*}
 \chi_{\epsilon, b}(x)\geq\frac{1}{2}\frac{\epsilon}{b-3\epsilon}.
 \end{equation*}
\item For $x\in \mathbb{R}$
\begin{equation*}
 \chi_{\frac{\epsilon}{3},b+\epsilon}'(x)\leq \frac{\epsilon}{b-3\epsilon}.
\end{equation*}
\item Given $\epsilon>0$ and $b\geq 5\epsilon$ there exist $c_{1},c_{2}>0$ such that 
\[ \begin{array}{lcr}
\chi_{\epsilon,b}'(x)\leq c_{1}\chi_{\epsilon/3,b+\epsilon}'(x)\chi_{\epsilon/3,b+\epsilon}(x), & \\
\chi_{\epsilon,b}'(x)\leq c_{2} \chi_{\epsilon/5, \epsilon}(x). & \\
\end{array}\] 
\item For $\epsilon>0 $ given and $b\geq 5\epsilon,$ we define the function 
\begin{equation*}
\eta_{\epsilon,b}=\sqrt{\chi_{\epsilon, b}\chi_{\epsilon,b }'}.
\end{equation*}
\item ${\displaystyle \supp(\phi_{\epsilon,b}),\,\supp(\widetilde{\phi_{\epsilon, b}})\subset [\epsilon/4,b],}$
\item ${\displaystyle \phi_{\epsilon}(x)=\widetilde{\phi_{\epsilon, b}}(x)=1,\quad x\in [\epsilon/2,\epsilon],}$
\item ${\displaystyle \supp(\psi_{\epsilon})\subseteq(-\infty,\epsilon/2]},$
\item for $x\in \mathbb{R}$
\begin{equation*}
\chi_{\epsilon, b}(x)+\phi_{\epsilon, b}(x)+\psi_{\epsilon}(x)=1,
\end{equation*}
and 
\begin{equation*}
\chi_{\epsilon, b}^{2}(x)+\widetilde{\phi_{\epsilon, b}}^{2}(x)+\psi_{\epsilon}(x)=1.
\end{equation*}
\end{enumerate}
The family ${\displaystyle \{\chi_{\epsilon, b}: \epsilon>0,\, b\geq 5\epsilon\}}$ is constructed as follows:
  let  $\rho\in C^{\infty}_{0}(\mathbb{R}),\, \rho(x)\geq 0,$  even, with $\supp(\rho)\subseteq(-1,1)$ and $ \|\rho\|_{L^{1}}=1.$
  
  Then defining
  \begin{equation*}
  \nu_{\epsilon,b}(x)=\left\{
  \begin{array}{lll}
  0, & x\leq 2\epsilon ,\\
  \frac{x}{b-3\epsilon}-\frac{2\epsilon}{b-3\epsilon}, & 2\epsilon\leq x\leq b-\epsilon,\\
  1,& x\geq b-\epsilon,
  \end{array} 
  \right. 
  \end{equation*}
and
\begin{equation*}
\chi_{\epsilon,b}(x)=\rho_{\epsilon}*\nu_{\epsilon,b}(x)
\end{equation*}
where $\rho_{\epsilon}(x)=\epsilon^{-1}\rho(x/\epsilon).$

Now that it has been described all the required estimates and tools necessary, we present the proof of our main result.	
	\begin{proof}[Proof of Theorem \ref{A}]
	Since the argument is   translation invariant, without loss of generality we will consider  the case  $x_{0}=0.$
	
	First, we will describe the formal calculations  assuming as much as regularity as possible, later we provide the justification using a limiting process. 
	
	The proof will be established  by induction, however in every step of induction we will subdivide every  case in two steps, due to the non-local  nature of the operator involving the dispersive part in the  equation in   (\ref{eq7}).
\begin{flushleft}
	\textbf{\underline{Case $j=1$}}\\
\end{flushleft}
	\begin{flushleft}
		\textit{Step 1.}
	\end{flushleft}	
First we  apply one spatial derivative  to the equation in  (\ref{eq7}), after that we multiply by $\partial
_{x}u(x,t)\chi_{\epsilon, b}^{2}(x+vt),$ and finally we integrate in the $x-$variable 
to obtain the identity 
\begin{equation*}
\begin{split}
&\frac{1}{2}\frac{\mathrm{d}}{\mathrm{d}t}\int_{\mathbb{R}}(\partial_{x}u)^{2}\chi_{\epsilon, b}^{2}\,\mathrm{d}x\underbrace{-\frac{v}{2}\int_{\mathbb{R}}(\partial_{x}u)^{2}(\chi_{\epsilon, b}^{2})'\,\mathrm{d}x}_{A_{1}(t)}\underbrace{-\int_{\mathbb{R}}(\partial_{x}D_{x}^{\alpha+1}\partial_{x}u)\partial_{x}u\chi_{\epsilon, b}^{2}\,\mathrm{d}x}_{A_{2}(t)}\\
&+\underbrace{\int_{\mathbb{R}} \partial_{x}\left(u\partial_{x}u\right) \partial_{x}u\chi_{\epsilon, b}^{2}\,\mathrm{d}x}_{A_{3}(t)}=0.
\end{split}
\end{equation*} 
\S.1\quad Combining the  local theory  we obtain
the following 
\begin{equation*}
\begin{split}
\int_{0}^{T}|A_{1}(t)|\;\mathrm{d}t&\leq \frac{v}{2}\int_{0}^{T}\int_{\mathbb{R}} (\partial_{x}u)^{2}(\chi_{\epsilon, b}^{2})'\;\mathrm{d}x\;\mathrm{d}t
\lesssim \|u\|_{L_{T}^{\infty}H^{\frac{3-\alpha}{2}}_{x}}.
\end{split}
\end{equation*}	
\S.2\quad Integration by parts and Plancherel's identity allow us rewrite the term $A_{2}$ as follows 
	\begin{equation}\label{n1}
	\begin{split}
	A_{2}(t)=\frac{1}{2}\int_{\mathbb{R}} \partial_{x}u\left[D_{x}^{\alpha+1}\partial_{x};\chi_{\epsilon, b}^{2}\right]\partial_{x}u\,\mathrm{d}x=-\frac{1}{2}\int_{\mathbb{R}}\partial_{x}u\left[\mathcal{H}D_{x}^{\alpha+2};\chi_{\epsilon, b}^{2}\right]\partial_{x}u\,\mathrm{d}x.
	\end{split}
	\end{equation}
	Since $\alpha+2>1,$ we have by (\ref{eq8}) that the commutator ${\displaystyle \left[\mathcal{H}D_{x}^{\alpha+2};\chi_{\epsilon, b}^{2}\right]}$ can be decomposed as  
 \begin{equation}\label{n2}
 \left[\mathcal{H}D_{x}^{\alpha+2};\chi_{\epsilon, b}^{2}\right]=-\frac{1}{2}P_{n}(\alpha+2)+\frac{1}{2}\mathcal{H}P_{n}(\alpha+2)\mathcal{H}-R_{n}(\alpha+2)
 \end{equation}
 for some positive integer $n,$ that will be fixed later.
 
 Inserting (\ref{n2}) into (\ref{n1}) 
	\begin{equation*}\label{}
	\begin{split}
	A_{2}(t)&=\frac{1}{2}\int_{\mathbb{R}}\partial_{x}uR_{n}(\alpha+2)\partial_{x}u\,\mathrm{d}x+\frac{1}{4}\int_{\mathbb{R}}\partial_{x}uP_{n}(\alpha+2)\partial_{x}u\,\mathrm{d}x\\
	&\quad -\frac{1}{4}\int_{\mathbb{R}} \partial_{x}u\mathcal{H}P_{n}(\alpha+2)\mathcal{H}\partial
	_{x}u\,\mathrm{d}x\\
	&=A_{2,1}(t)+A_{2,2}(t)+A_{2,3}(t).
		\end{split}
	\end{equation*}
Now, we proceed to fix the value of $n$ present in the terms  $A_{2,1},A_{2,2}$ and $A_{2,3},$ according to a determinate condition. 

First, notice that    
\begin{equation*}
\begin{split}
A_{2,1}(t)&=\frac{1}{2}\int_{\mathbb{R}}D_{x}\mathcal{H}uR_{n}(\alpha+2)D_{x}\mathcal{H}u\,\mathrm{d}x
=\frac{1}{2}\int_{\mathbb{R}}\mathcal{H}uD_{x}\left\{R_{n}(\alpha+2)D_{x}\mathcal{H}u\right\}\,\mathrm{d}x\\
\end{split}
\end{equation*}
Then  we   fix $n$ such that 
$
2n+1\leq a+2\sigma\leq 2n+3,
$ that according to the case we are studying   ($j=1$), corresponds to  $a=\alpha+2$ and $\sigma=1.$ This   produces $n=1.$  

For this  $n$  in particular we have by Proposition \ref{propo2} that  $R_{1}(\alpha+2)$  maps    $L^{2}_{x}$ into $L^{2}_{x}.$

Hence,
\begin{equation*}
\begin{split}
A_{2,1}(t)&\lesssim \|\mathcal{H}u(t)\|_{L^{2}_{x}}^{2}\left\|\widehat{D_{x}^{4+\alpha}\chi_{\epsilon, b}^{2}}\right\|_{L^{1}_{\xi}}=c \|u_{0}\|_{L^{2}_{x}}^{2}\left\|\widehat{D_{x}^{4+\alpha}\chi_{\epsilon, b}^{2}}\right\|_{L^{1}_{\xi}},
\end{split}
\end{equation*}
which  after integrating in time yields
\begin{equation*}
\int_{0}^{T}|A_{2,1}(t)|\,\mathrm{d}t\lesssim\|u_{0}\|_{L^{2}_{x}}^{2}\sup_{0\leq t\leq T}\left\|\widehat{D_{x}^{4+\alpha}\chi_{\epsilon, b}^{2}}\right\|_{L^{1}_{\xi}}.
\end{equation*}
Next,
 we turn our attention  to $A_{2,2}.$ 
Replacing  $P_{1}(\alpha+2)$ into $A_{2,2}$ 
\begin{equation*}
\begin{split}
A_{2,2}(t)&
=\widetilde{c_{1}}\int_{\mathbb{R}} \left(D_{x}^{\frac{\alpha+1}{2}}\partial_{x}u\right)^{2}(\chi_{\epsilon, b}^{2})'\,\mathrm{d}x-\widetilde{c_{3}}\int_{\mathbb{R}}\left(D_{x}^{\frac{\alpha+1}{2}}\mathcal{H}u\right)^{2}(\chi_{\epsilon, b}^{2})'''\,\mathrm{d}x\\
&=A_{2,2,1}(t)+A_{2,2,2}(t).
\end{split}
\end{equation*}
We shall underline that $A_{2,2,1}(t)$ is positive, besides it  represents explicitly  the smoothing effect for the case $j=1$.

Regarding  $A_{2,2,2},$ the local theory combined with interpolation leads to  	
		\begin{equation}\label{eq8.17}
	\begin{split}
	\int_{0}^{T}|A_{2,2,2}(t)|\,\mathrm{d}t&\lesssim \left\|u\right\|_{L_{T}^{\infty}H^{\frac{3-\alpha}{2}}_{x}}.
	\end{split}
	\end{equation}
		After replacing  \eqref{eq105} into $A_{2,3}$ and using  the fact that Hilbert transform is skew-symmetric 
		
		\begin{equation*}
		\begin{split}
		A_{2,3}(t)&
		=\widetilde{c_{1}}\int_{\mathbb{R}}\left(D_{x}^{1+\frac{\alpha+1}{2}}u\right)^{2}(\chi_{\epsilon, b}^{2})'\;\mathrm{d}x-\widetilde{c_{3}}\int_{\mathbb{R}}\left(\mathcal{H}D_{x}^{\frac{\alpha+1}{2}}u\right)^{2}(\chi_{\epsilon, b}^{2})'''\,\mathrm{d}x\\
		&=A_{2,3,1}(t)+A_{2,3,2}(t).
		\end{split}
		\end{equation*}  
				Notice that the term $A_{2,3,1}$ is positive and represents the smoothing effect. In contrast, the term $A_{2,3,2}$  is estimated as we did with  $A_{2,2,2}$  in (\ref{eq8.17}). So, after integration in the time variable
		\begin{equation*}
		\int_{0}^{T}|A_{2,3,2}(t)|\,\mathrm{d}t\lesssim \left\|u\right\|_{L_{T}^{\infty}H_{x}^{\frac{3-\alpha}{2}}}.
		\end{equation*}
		Finally,  after apply integration by parts 
		\begin{equation*}
		\begin{split}
			A_{3}(t)
			&=\frac{1}{2}\int_{\mathbb{R}}\partial_{x}u(\partial_{x}u)^{2}\chi_{\epsilon, b}^{2}\;\mathrm{d}x-\frac{1}{2}\int_{\mathbb{R}}u(\partial_{x}u)^{2}(\chi_{\epsilon, b}^{2})'\;\mathrm{d}x\\
			&=A_{3,1}(t)+A_{3,2}(t).
				\end{split}
		\end{equation*}
		On  one hand,
		\begin{equation*}
		|A_{3,1}(t)|\lesssim\|\partial_{x}u(t)\|_{L^{\infty}_{x}}\int_{\mathbb{R}}(\partial_{x}u)^{2}\chi_{\epsilon, b}^{2}\;\mathrm{d}x,
				\end{equation*}
				where the integral expression on the right-hand side
is the quantity to be estimated by means of   Gronwall's inequality.

On the other hand, 
\begin{equation*}
|A_{3,2}(t)\lesssim \|u(t)\|_{L^{\infty}_{x}}\int_{0}^{T}(\partial_{x}u)^{2}(\chi_{\epsilon, b}^{2})'\;\mathrm{d}x.
\end{equation*}

By Sobolev embedding  we have  after integrating in time 
\begin{equation*}
\int_{0}^{T}|A_{3,2}(t)|\;\mathrm{d}t\lesssim\left(\sup_{0\leq t\leq T}\|u(t)\|_{H^{s(\alpha)+}_{x}}\right)\int_{0}^{T}\int_{\mathbb{R}}(\partial_{x}u)^{2}(\chi_{\epsilon, b}^{2})'\;\mathrm{d}x\;\mathrm{d}t\leq c.
\end{equation*}

  Since $\left\|\partial_{x}u\right\|_{L_{T}^{1}L_{x}^{\infty}}<\infty,$ then after gathering all estimates above and apply    Gronwall's inequality we obtain  
\begin{equation}\label{eq110}
\begin{split}
&\sup_{0\leq t\leq T}\left\|\partial_{x}u\chi_{\epsilon, b}\right\|_{L^{2}_{x}}^{2}
+\left\|D_{x}^{\frac{\alpha+1}{2}}\partial_{x}u\eta_{\epsilon,b}\right\|_{L^{2}_{T}L^{2}_{x}}^{2}+\left\|D_{x}^{1+\frac{\alpha+1}{2}}u\eta_{\epsilon,b}\right\|_{L^{2}_{T}L^{2}_{x}}^{2}\leq c^{*}_{1,1}
\end{split}
\end{equation}
where $c^{*}_{1,1}=c^{*}_{1,1}\left(\alpha;\epsilon;T; \|u_{0}\|_{H_{x}^{\frac{3-\alpha}{2}}};\|\partial_{x}u_{0}\chi_{\epsilon, b}\|_{L_{x}^{2}}\right)>0,$  for any $\epsilon>0,\, b\geq 5\epsilon$ and $v\geq 0.$

This  estimate finish the step 1 corresponding to the case $j=1.$

  The local  smoothing effect obtained  above is just $\frac{1+\alpha}{2}$  derivative (see \cite{ILP2}). So, the  iterative argument is carried out  in two steps, the first step for positive integers $m$  and the second one for $m+\frac{1-\alpha}{2}.$

	\begin{flushleft}		
		\textit{Step 2.}
\end{flushleft}
 After apply the operator $D_{x}^{\frac{1-\alpha}{2}}\partial_{x}$ to  the equation in (\ref{eq7})  and 
 multiply the resulting  by $D_{x}^{\frac{1-\alpha}{2}}\partial_{x}u\chi_{\epsilon, b}^{2}(x+vt)$ one gets 
\begin{equation*}
\begin{split}
&D_{x}^{\frac{1-\alpha}{2}}\partial_{x}\partial_{t}uD_{x}^{\frac{1-\alpha}{2}}\partial_{x}u\chi_{\epsilon, b}^{2}-D_{x}^{\frac{1-\alpha}{2}}\partial_{x}D_{x}^{1+\alpha}\partial_{x}uD_{x}^{\frac{1-\alpha}{2}}\partial_{x}u\chi_{\epsilon, b}^{2}\\
&+D_{x}^{\frac{1-\alpha}{2}}\partial_{x}(u\partial_{x}u)D_{x}^{\frac{1-\alpha}{2}}\partial_{x}u\chi_{\epsilon, b}^{2}=0,
\end{split}
\end{equation*}
which after integrate in the spatial variable it  becomes 
	\begin{equation*}
	\begin{split}
	&\frac{1}{2}\frac{\mathrm{d}}{\mathrm{d}t}\int_{\mathbb{R}} (D_{x}^{\frac{1-\alpha}{2}}\partial_{x}u)^{2}\chi_{\epsilon, b}^{2}\;\mathrm{d}x\underbrace{-v\int_{\mathbb{R}}(D_{x}^{\frac{1-\alpha}{2}}\partial_{x}u)^{2}(\chi_{\epsilon, b}^{2})^{'}\;\mathrm{d}x}_{A_{1}(t)}\\
	&\underbrace{-\int_{\mathbb{R}}(D_{x}^{\frac{1-\alpha}{2}}\partial_{x}D_{x}^{1+\alpha}\partial_{x}u)\,D_{x}^{\frac{1-\alpha}{2}}\partial_{x}u\,\chi_{\epsilon, b}^{2}\;\mathrm{d}x}_{A_{2}(t)}\underbrace{+\int_{\mathbb{R}}D_{x}^{\frac{1-\alpha}{2}}\partial_{x}(u\partial_{x}u)D_{x}^{\frac{1-\alpha}{2}}\partial_{x}u\, \chi_{\epsilon, b}^{2}\;\mathrm{d}x}_{A_{3}(t)}
	=0.
		\end{split}
	\end{equation*}
\S.1\quad  	First observe that  by the local theory 
	\begin{equation*}
	\begin{split}
	\int_{0}^{T}|A_{1}(t)|\,\mathrm{d}t&\leq |v|\int_{0}^{T}\int_{\mathbb{R}}\left(D_{x}^{\frac{1-\alpha}{2}}\partial_{x}u\right)^{2}(\chi_{\epsilon, b}^{2})'\,\mathrm{d}x\,\mathrm{d}t\lesssim \|u\|_{L_{T}^{\infty}H_{x}^{\frac{3-\alpha}{2}}}.
	\end{split}
	\end{equation*}

\S.2\quad Concerning to the  term $A_{2}$ integration by parts and  Plancherel's identity   yields
\begin{equation}\label{eq8.15}
\begin{split}
A_{2}(t)
&=-\frac{1}{2}\int_{\mathbb{R}}D_{x}^{\frac{1-\alpha}{2}+1}\mathcal{H}u\left[\mathcal{H}D_{x}^{2+\alpha}; \chi_{\epsilon, b}^{2}\right] D_{x}^{\frac{1-\alpha}{2}+1}\mathcal{H}u\,\mathrm{d}x.
\end{split}
\end{equation}
	Since $2+\alpha>1,$ we have by (\ref{eq8}) that the commutator ${\displaystyle \left[\mathcal{H}D_{x}^{\alpha+2};\chi_{\epsilon, b}^{2}\right]}$ can be decomposed as 
\begin{equation}\label{eq8.14}
\left[\mathcal{H}D_{x}^{\alpha+2};\chi_{\epsilon, b}^{2}\right]+\frac{1}{2}P_{n}(\alpha+2)+R_{n}(\alpha+2)=\frac{1}{2}\mathcal{H}P_{n}(\alpha+2)\mathcal{H}
\end{equation}
for some positive integer $n$  that as in the previous cases it will be fixed suitably.

Replacing (\ref{eq8.14}) into (\ref{eq8.15})  
\begin{equation*}
\begin{split}
A_{2}(t)
&=\frac{1}{2}\int_{\mathbb{R}}D_{x}^{\frac{3-\alpha}{2}}\mathcal{H}u\left(R_{n}(\alpha+2)D_{x}^{\frac{3-\alpha}{2}}\mathcal{H}u\right)\mathrm{d}x\\
&\quad +
\frac{1}{4}\int_{\mathbb{R}}D_{x}^{\frac{3-\alpha}{2}}\mathcal{H}u\left(P_{n}(\alpha+2)D_{x}^{\frac{3-\alpha}{2}}\mathcal{H}u\right)\mathrm{d}x\\
&\quad-\frac{1}{4}\int_{\mathbb{R}}D_{x}^{\frac{3-\alpha}{2}}\mathcal{H}u\left(\mathcal{H}P_{n}(\alpha+2)\mathcal{H}D_{x}^{\frac{3-\alpha}{2}}\mathcal{H}u\right)\mathrm{d}x\\
&=A_{2,1}(t)+A_{2,2}(t)+A_{2,3}(t).
\end{split}
\end{equation*}
Fixing the value of $n$ present in the terms $A_{2,1},A_{2,2}$ and $A_{2,3}$  requires an argument  almost  similar to that one  used in step 1.
First, we deal with  $A_{2,1}$ where  a simple computation produces
\begin{equation*}
\begin{split}
A_{2,1}(t)&=\frac{1}{2}\int_{\mathbb{R}}\mathcal{H}uD_{x}^{\frac{3-\alpha}{2}}\left\{R_{n}(\alpha+2)D_{x}^{\frac{3-\alpha}{2}}\mathcal{H}u\right\}\,\mathrm{d}x.
\end{split}
\end{equation*}
We fix $n\in\mathbb{Z^{+}}$ in such a way  
\begin{equation*}
2n+1\leq a+2\sigma\leq 2n+3
\end{equation*} 
where $a=\alpha+2$ and $\sigma=\frac{3-\alpha}{2}$ in order to obtain obtain $n=1$ or $n=2.$ For the sake of  simplicity we choose $n=1.$    
 
   Hence, by construction $R_{1}(\alpha+2)$ satisfies the  hypothesis of 
  Proposition \ref{propo2}, and 
\begin{equation*}
\begin{split}
|A_{2,1}(t)|&\lesssim\|\mathcal{H}u(t)\|_{L^{2}_{x}}\left\|\widehat{D_{x}^{5}(\chi_{\epsilon, b}^{2})}\right\|_{L^{1}_{\xi}}
\lesssim\|u_{0}\|_{L^{2}_{x}}\left\|\widehat{D_{x}^{5}(\chi_{\epsilon, b}^{2})}\right\|_{L^{1}_{\xi}}.
\end{split}
\end{equation*} 	
Thus
\begin{equation*}
\int_{0}^{T}|A_{2,1}(t)|\,\mathrm{d}t\lesssim\|u_{0}\|_{L^{2}_{x}}\sup_{0\leq t\leq  T}\left\|\widehat{D_{x}^{5}(\chi_{\epsilon, b}^{2})}\right\|_{L^{1}_{\xi}}.
\end{equation*}
Next, after replacing $P_{1}(\alpha+2)$ in $A_{2,2}$
\begin{equation*}
\begin{split}
A_{2,2}(t)
 &=\left(\frac{\alpha+2}{4}\right)\int_{\mathbb{R}}\left(\mathcal{H}\partial_{x}^{2}u\right)^{2}(\chi_{\epsilon, b}^{2})^{'}\;\mathrm{d}x-c_{3}\left(\frac{\alpha+2}{16}\right)\int_{\mathbb{R}}(\partial_{x}u)^{2}(\chi_{\epsilon, b}^{2})^{'''}\,\mathrm{d}x\\
 &=A_{2,2,1}(t)+A_{2,2,2}(t).
 \end{split}
\end{equation*}	
The smoothing effect corresponds to the term $A_{2,2,1}$ and it will be bounded after integrating in time. In contrast,  to bound $A_{2,2,2}$ is required only the local theory,  in fact 
 \begin{equation*}
 \begin{split}
 \int_{0}^{T}|A_{2,2,2}(t)|\,\mathrm{d}t&\lesssim \|u\|_{L_{T}^{\infty}H_{x}^{\frac{3-\alpha}{2}}}. 
 \end{split}
 \end{equation*}
Concerning the term $A_{2,3}$ we have after replacing $P_{1}(\alpha+2)$   
 and using the  properties of the  Hilbert transform that 
	\begin{equation*}
	\begin{split}
	A_{2,3}(t) 
	&=\left(\frac{\alpha+2}{4}\right)\int_{\mathbb{R}} (\partial_{x}^{2}u)^{2}(\chi_{\epsilon, b}^{2})'\,\mathrm{d}x-c_{3}\left(\frac{\alpha+2}{16}\right)\int_{\mathbb{R}}(D_{x}u)^{2}(\chi_{\epsilon, b}^{2})'''\,\mathrm{d}x\\
	&=A_{2,3,1}(t)+A_{2,3,2}(t).
	\end{split}
	\end{equation*}	
	As before, $A_{2,3,1}\geq0$ and represents the smoothing effect.
	Besides, the local theory and interpolation yields
\begin{equation*}
\int_{0}^{T}|A_{2,3,2}(t)|\,\mathrm{d}t\lesssim\|u\|_{L_{T}^{\infty}H_{x}^{\frac{3-\alpha}{2}}}.
\end{equation*} 
	
\S.3\quad  It only remains to handle the term $A_{3}.$

Since 	
	\begin{equation}\label{eqA1}
	\begin{split}
	&D_{x}^{\frac{1-\alpha}{2}}\partial_{x}(u\partial_{x}u)\chi_{\epsilon, b}\\ 
	&=-\frac{1}{2}\left[D_{x}^{\frac{1-\alpha}{2}}\partial_{x};\chi_{\epsilon, b}\right]\partial_{x}((\chi_{\epsilon,b}u)^{2}+(\widetilde{\phi_{\epsilon,b}}u)^{2}+(\psi_{\epsilon}u^{2}))\\
	&\quad +\left[D_{x}^{\frac{1-\alpha}{2}}\partial_{x}; u\chi_{\epsilon, b}\right]\partial_{x}((\chi_{\epsilon, b}u)+ (u\phi_{\epsilon,b})+(u\psi_{\epsilon}))
	+ u\chi_{\epsilon, b}D_{x}^{\frac{1-\alpha}{2}}\partial_{x}^{2}u\\
	&=\widetilde{A_{3,1}}(t)+\widetilde{A_{3,2}}(t)+\widetilde{A_{3,3}}(t)+\widetilde{A_{3,4}}(t)+\widetilde{A_{3,5}}(t)+\widetilde{A_{3,6}}(t)+\widetilde{A_{3,7}}(t).
	\end{split}
	\end{equation}	
First, we  rewrite   $\widetilde{A_{3,1}}$ as   follows
	 \begin{equation*}
	 \begin{split}
	 	\widetilde{A_{3,1}}(t)=c_{\alpha}\mathcal{H} \left[ D_{x}^{1+\frac{1-\alpha}{2}};\chi_{\epsilon,b}\right]\partial_{x}((u\chi_{\epsilon,b})^{2})+c_{\alpha}\left[\mathcal{H};\chi_{\epsilon,b}\right]D_{x}^{1+\frac{1-\alpha}{2}}\partial_{x}\left(\left(\chi_{\epsilon,b}u\right)^{2}\right),
	 \end{split}
	 \end{equation*} 
	 where $c_{\alpha}$ denotes a non-null  constant.
	  Next, combining \eqref{eq5}, \eqref{kpdl} and Lemma \ref{lema2}   one gets  
	 
	 \begin{equation*}
	 \begin{split}
	 \|\widetilde{A_{3,1}}(t)\|_{L^{2}_{x}}& 
	 \lesssim  \left\|D_{x}^{1+\frac{1-\alpha}{2}}(u\chi_{\epsilon,b})\right\|_{L^{2}_{x}}\|u\|_{L^{\infty}_{x}}+\|u_{0}\|_{L_{x}^{2}}\|u\|_{L^{\infty}_{x}}
	 \end{split}
	 \end{equation*}
	 and  
	 \begin{equation*}
	 \begin{split}
	 \|\widetilde{A_{3,2}}(t)\|_{L^{2}_{x}}& 
	 \lesssim   \left\|D_{x}^{1+\frac{1-\alpha}{2}}(u\widetilde{\phi_{\epsilon,b}})\right\|_{L^{2}_{x}}\|u\|_{L^{\infty}_{x}}+\|u_{0}\|_{L_{x}^{2}}\|u\|_{L^{\infty}_{x}}.
	 \end{split}
	 \end{equation*}
	 Next, we recall that  by construction
	 \begin{equation*}
	 \dist\left(\supp\left( \chi_{\epsilon,b }\right), \supp\left(\psi_{\epsilon}\right)\right)\geq \frac{\epsilon}{2},
	 \end{equation*} 
	 so, by  Lemma \ref{lemma1}
	 	 \begin{equation*}
	 	 \begin{split}
	 	 \|\widetilde{A_{3,3}}(t)\|_{L^{2}_{x}}&=\left\|\left[D_{x}^{\frac{1-\alpha}{2}}\partial_{x};\chi_{\epsilon}\right]\partial_{x}(\psi_{\epsilon}u^{2})\right\|_{L^{2}_{x}}
	 	 \lesssim\|u_{0}\|_{L^{2}_{x}}\|u\|_{L^{\infty}_{x}}.
	 	 	 	 \end{split}
	 	 \end{equation*}
	 Rewriting  
	 \begin{equation*}
	 \begin{split}
	 \widetilde{A_{3,4}}(t)=c\mathcal{H}\left[D_{x}^{1+\frac{1-\alpha}{2}};u\chi_{\epsilon,b}\right]\partial_{x}(u\chi_{\epsilon,b})-c\left[\mathcal{H};u\chi_{\epsilon,b}\right]\partial_{x}D_{x}^{1+\frac{1-\alpha}{2}}(u\chi_{\epsilon,b})
	 \end{split}
	 \end{equation*}
	 for some non-null constant $c.$
	
	 Thus, by the commutator estimates  \eqref{eq31} and   \eqref{dlkp} 
	 \begin{equation*}
	 \begin{split}
	 \|\widetilde{A_{3,4}}(t)\|_{L^{2}_{x}} 
	 &\lesssim \|\partial_{x}(u\chi_{\epsilon,b})\|_{L^{\infty}_{x}}\left\|D_{x}^{1+\frac{1-\alpha}{2}}(u\chi_{\epsilon,b})\right\|_{L^{2}_{x}}.
	 \end{split}
	 \end{equation*}
	 	 Applying the same procedure to $\widetilde{A_{3,5}}$ yields  
	\begin{equation*}
	\begin{split}
	&\|\widetilde{A_{3,5}}(t)\|_{L^{2}_{x}}\\ 
	&\lesssim \|\partial_{x}(u\chi_{\epsilon,b})\|_{L^{\infty}_{x}}\left\|D_{x}^{1+\frac{1-\alpha}{2}}(u\phi_{\epsilon,b})\right\|_{L^{2}_{x}}+\|\partial_{x}(u\phi_{\epsilon,b})\|_{L^{\infty}_{x}}\left\|D_{x}^{1+\frac{1-\alpha}{2}}(u\chi_{\epsilon,b})\right\|_{L^{2}_{x}}.
	\end{split}
	\end{equation*}
Since the supports of $\chi_{\epsilon, b}$ and $\psi_{\epsilon}$ are separated we obtain by  Lemma \ref{lemma1} 
	 \begin{equation*}
	 \begin{split}
	 \|\widetilde{A_{3,6}}(t)\|_{L^{2}_{x}}&=\left\|u\chi_{\epsilon,b}\partial_{x}^{2}D_{x}^{1+\frac{1-\alpha}{2}}(u\psi_{\epsilon})\right\|_{L^{2}_{x}}
\lesssim\left\|u_{0}\right\|_{L^{2}_{x}}\left\|u\right\|_{L^{\infty}_{x}}.
	 \end{split}
	 \end{equation*}
	 To finish with the estimates above we use the relation
	 \begin{equation*}
	 \chi_{\epsilon,b }(x)+\phi_{\epsilon, b}(x)+\psi_{\epsilon}(x)=1\quad \forall x\in \mathbb{R}.
	 \end{equation*}
	 Then
	 \begin{equation*}
	 \begin{split}
	 D_{x}^{1+\frac{1-\alpha}{2}}(u\chi_{\epsilon, b})&=D_{x}^{1+\frac{1-\alpha}{2}}u \chi_{\epsilon, b}+ \left[D_{x}^{1+\frac{1-\alpha}{2}}; \chi_{\epsilon, b}\right](u\chi_{\epsilon, b}+u\phi_{\epsilon, b}+u\psi_{\epsilon})\\
	 &=I_{1}+I_{2}+I_{3}+I_{4}.
	 \end{split}
	 	 \end{equation*}
	 	 Notice that $\|I_{1}\|_{L_{x}^{2}}$ is the quantity to estimate. In contrast,  $\|I_{2}\|_{L_{x}^{2}}$ and $\|I_{3}\|_{L_{x}^{2}}$  can be handled by Lemma \ref{dlkp}  combined with  the local theory. Meanwhile  $I_{3}$ can be bounded  by  using  Lemma \ref{lemma1}.
	 	 
	 	 We notice that the gain of regularity obtained  in the step 1 implies that \\ $\|D_{x}^{1+\frac{1+\alpha}{2}}\left(u\phi_{\epsilon,b}\right)\|_{L_{x}^{2}}<\infty.$ To show this  we use  Theorem \ref{thm11}  and  H\"{o}lder's inequality   as follows
	 	  \begin{equation}\label{eq72}
	 	 \begin{split}
	 	 \left\|D_{x}^{1+\frac{1+\alpha}{2}}(u\phi_{\epsilon,b})\right\|_{L^{2}_{x}}
	 	 &\lesssim  \|u_{0}\|_{L_{x}^{2}}+\|u\|_{L_{x}^{\infty}}+ \left\|\mathbb{1}_{[\epsilon/4,b]} D_{x}^{1+\frac{1+\alpha}{2}}u\right\|_{L_{x}^{2}}+\left\|\mathbb{1}_{[\epsilon/4,b]}\mathcal{H}D_{x}^{\frac{1+\alpha}{2}}u\right\|_{L_{x}^{2}}.
	 	 \end{split}
	 	 \end{equation}
	 	 The second term on the right hand side after integrate in time  is  controlled by using Sobolev's embedding. Meanwhile, the third term can be handled  after integrate in time and use  (\ref{eq110})  with $(\epsilon,b)=(\epsilon/24,b+7\epsilon/24).$  
	 	 
	 	  The fourth  term in the right hand side can be bounded  combining  the local theory and interpolation. 
	 	  
	 	  Hence, after integration in time 
	 	  \begin{equation}\label{eq73}
	 	  \left\|D_{x}^{1+\frac{1+\alpha}{2}}(u\phi_{\epsilon,b})\right\|_{L^{2}_{T}L^{2}_{x}}<\infty,
	 	  \end{equation}
	 	which clearly implies ${ \displaystyle \left\|D_{x}^{1+\frac{1-\alpha}{2}}(u\phi_{\epsilon,b})\right\|_{L^{2}_{T}L^{2}_{x}}<\infty},$ as was required.
	 	Analogously can be handled $\left\|D_{x}^{1+\frac{1-\alpha}{2}}(u\widetilde{\phi_{\epsilon,b}})\right\|_{L_{x}^{2}}.$
	 	  
	 	Finally, 
	 	\begin{equation*}
	 	\begin{split}
	 	A_{3,7}(t)
	 	&=-\frac{1}{2}\int_{\mathbb{R}}\partial_{x}u\chi_{\epsilon, b}^{2}\left(D_{x}^{\frac{1-\alpha}{2}}\partial_{x}u\right)^{2}\mathrm{d}x-\frac{1}{2}\int_{\mathbb{R}}u(\chi_{\epsilon, b}^{2})'\left(D_{x}^{\frac{1-\alpha}{2}}\partial_{x}u\right)^{2}\mathrm{d}x\\
	 	&=A_{3,7,1}(t)+A_{3,7,2}(t).
	 	\end{split}
	 	\end{equation*}
	 	Since 
	 	\begin{equation*}
	 	|A_{3,7,1}(t)|\lesssim\|\partial_{x}u(t)\|_{L^{\infty}_{x}}\int_{\mathbb{R}}\left(D_{x}^{\frac{1-\alpha}{2}}\partial_{x}u\right)^{2}\chi_{\epsilon, b}^{2}\,\mathrm{d}x,
	 	\end{equation*}
	 	being the last integral  the quantity to be estimated by means of  Gronwall's  inequality, and by the local theory $\|\partial_{x}u\|_{L^{1}_{T}L_{x}^{\infty}}<\infty.$
	 
	 Sobolev's embedding led us to  
	 		 	\begin{equation*}
	 	\begin{split}
	 	\int_{0}^{T}|A_{3,7,2}(t)|\,\mathrm{d}t
	 	&\lesssim \left( \sup_{0\leq t\leq T}\|u(t)\|_{H^{s(\alpha)+}_{x}}\right)\int_{0}^{T}\int_{\mathbb{R}}\chi_{\epsilon, b} \,\chi_{\epsilon, b}'\left(D_{x}^{\frac{1-\alpha}{2}}\partial_{x}u\right)^{2}\mathrm{d}x\mathrm{d}t.
	 	\end{split}
	 	\end{equation*}
	 	Gathering all the information corresponding to this step combined  with  Gronwall's inequality yields
	 	
	\begin{equation}\label{eq120}
	\begin{split}
	&\sup_{0\leq t\leq T}\left\|D_{x}^{\frac{1-\alpha}{2}}\partial_{x}u\chi_{\epsilon, b}\right\|_{L^{2}_{x}}^{2}+\left\|\partial_{x}^{2}u\eta_{\epsilon,b}\right\|_{L^{2}_{T}L^{2}_{x}}^{2}+\left\|\mathcal{H}\partial_{x}u\eta_{\epsilon, b}\right\|_{L^{2}_{T}L^{2}_{x}}^{2}\leq c^{*}_{1,2}
	\end{split}
		\end{equation}
	with $c^{*}_{1,2}=c^{*}_{1,2}\left(\alpha;\epsilon;T;v; \|u_{0}\|_{H_{x}^{\frac{3-\alpha}{2}}};\left\|D_{x}^{\frac{1-\alpha}{2}}\partial_{x}u_{0}\chi_{\epsilon, b}\right\|_{L_{x}^{2}}\right)$ for any $\epsilon>0,\,b \geq 5\epsilon$ and $v\geq0.$
	
	This finishes the step two corresponding to the  case $j=1$ in the induction process.
	
Next, we present the case $j=2,$ to show how we proceed in the case $j$ even.
	\begin{flushleft}
			\textbf{\underline{Case $j=2$}}
	\end{flushleft}

	\begin{flushleft}		
	\textit{Step 1.}
	\end{flushleft}
	First we  apply two spatial derivatives  to the equation in  (\ref{eq7}), after that we multiply by $\partial
	_{x}^{2}u(x,t)\chi_{\epsilon, b}^{2}(x+vt),$ and finally we integrate in the $x-$variable 
	to obtain the identity 
	\begin{equation}\label{eqk1}
	\begin{split}
	&\frac{1}{2}\frac{\mathrm{d}}{\mathrm{d}t}\int_{\mathbb{R}}\left(\partial_{x}^{2}u\right)^{2}\chi_{\epsilon, b}^{2}\,\mathrm{d}x-\underbrace{\frac{v}{2}\int_{\mathbb{R}}
\left(\partial_{x}^{2}u\right)^{2}(\chi_{\epsilon,b}^{2})'\,\mathrm{d}x}_{ A_{1}(t)}\underbrace{-\int_{\mathbb{R}}\left(\partial_{x}^{2}D_{x}^{\alpha+1}\partial_{x}u\right)\, \partial_{x}^{2}u \chi_{\epsilon, b}^{2}\,\mathrm{d}x}_{A_{2}(t)} \\
&+\underbrace{\int_{\mathbb{R}} \partial_{x}^{2}(u\partial_{x}u)\partial_{x}^{2}u\chi_{\epsilon,b}^{2}\,\mathrm{d}x}_{ A_{3}(t)}=0.\\
\end{split}
	\end{equation}
Similarly as was done in the previous steps we first proceed to  estimate   $A_{1}.$

\S.1  
  By (\ref{eq120}) it follows that  
  \begin{equation}\label{m1}
  \begin{split}
    \int_{0}^{T}|A_{1}(t)|\,\mathrm{d}t & \leq \int_{0}^{T}\int_{\mathbb{R}}(\partial_{x}^{2}u)^{2}(\chi_{\epsilon, b}^{2})'\,\mathrm{d}x\,\mathrm{d}t\leq c^{*}_{1,2}.\\
  \end{split}
  \end{equation} 
\S2. 
To extract information  from the term  $A_{2}$   we  use integration by parts and Plancherel's identity to obtain 
	\begin{equation}\label{eq8.10}
	\begin{split}
	A_{2}(t)=\frac{1}{2}\int_{\mathbb{R}} \partial_{x}^{2}u\left[D_{x}^{\alpha+1}\partial_{x};\chi_{\epsilon, b}^{2}\right]\partial_{x}^{2}u\,\mathrm{d}x=-\frac{1}{2}\int_{\mathbb{R}}\partial_{x}^{2}u\left[\mathcal{H}D_{x}^{\alpha+2};\chi_{\epsilon, b}^{2}\right]\partial_{x}^{2}u\,\mathrm{d}x.
	\end{split}
	\end{equation}

Although   this stage of the process is related to the  one performed in step 1 (for $j=1$),  we will use again    the commutator expansion in \eqref{eq8}, taking into account  in this case that $a=\alpha+2>1$ and $n$ is a non-negative integer whose value will be fixed later. 
  
Then,
\begin{equation*}
\begin{split}
A_{2}(t)&= \frac{1}{2}\int_{\mathbb{R}}\partial_{x}^{2}u\,R_{n}(s+2)\partial_{x}^{2}u\,\mathrm{d}x +\frac{1}{4}\int_{\mathbb{R}}\partial_{x}^{2}u\,P_{n}(s+2)\partial_{x}^{2}u\,\mathrm{d}x\\
&\quad -\frac{1}{4}\int_{\mathbb{R}}\partial_{x}^{2}u\,\mathcal{H}P_{n}(s+2)\mathcal{H}\partial_{x}^{2}u\,\mathrm{d}x\\
&=A_{2,1}(t)+A_{2,2}(t)+A_{2,3}(t).
\end{split}
\end{equation*}
Essentially, the key term which allows us  to fix the value of  $n,$ correspond to $A_{2,1}.$ Indeed, after some integration by parts  
\begin{equation*}
\begin{split}
A_{2,1}(t)&=\frac{1}{2}\int_{\mathbb{R}}u\,\partial_{x}^{2}R_{n}(\alpha+2)\partial_{x}^{2}u\,\mathrm{d}x\\
          &=\frac{1}{2}\int_{\mathbb{R}}u\,\partial_{x}^{2}\left\{R_{n}(\alpha+2)\partial_{x}^{2}u\right\}\,\mathrm{d}x.
\end{split}
\end{equation*}
We fix $n$ such that it satisfies  
\begin{equation*}
2n+1\leq a+2\sigma\leq 2n+3.
\end{equation*}
 In this case with $a=\alpha+2>1$  and  $\sigma=2,$  we  obtain  $n=2.$ 

Hence by construction the  Proposition \ref{propo2}  guarantees that $D_{x}^{2}R_{2}(\alpha+2)D_{x}^{2}$ is bounded in $L^{2}_{x}.$ 

Thereby
\begin{equation*}
\begin{split}
|A_{2,1}(t)|&\lesssim \|u(t)\|_{L^{2}_{x}}^{2}\left\|D_{x}^{2}R_{2}(\alpha+2)D_{x}^{2}u\right\|_{L^{2}_{x}}\leq c\|u_{0}\|_{L^{2}_{x}}^{2}\left\|\widehat{D_{x}^{\alpha+6}\left(\chi_{\epsilon, b}^{2}\right)}\right\|_{L_{\xi}^{1}}.
\end{split}
\end{equation*}
Since  we  fixed  $n=2,$ we proceed to handle the contribution coming from   $A_{2,2}$ and $A_{2,3}.$

Next,
\begin{equation*}
\begin{split}
A_{2,2}(t) 
&=\widetilde{c_{1}}\int_{\mathbb{R}} \left(D_{x}^{\frac{\alpha+1}{2}}\partial_{x}^{2}u\right)^{2}(\chi_{\epsilon, b}^{2})'\mathrm{d}x-\widetilde{c_{3}}\int_{\mathbb{R}}\left(D_{x}^{1+\frac{\alpha+1}{2}}u\right)^{2}(\chi_{\epsilon, b}^{2})^{(3)}\mathrm{d}x\\
&\quad +c_{5}\left(\frac{\alpha+2}{64}\right)\int_{\mathbb{R}} (D_{x}^{\frac{\alpha+1}{2}}u)^{2}(\chi_{\epsilon, b}^{2})^{(5)}\mathrm{d}x\\
&=A_{2,2,1}(t)+A_{2,2,2}(t)+A_{2,2,3}(t).
\end{split}
\end{equation*}
	Notice    that $A_{2,2,1}\geq 0$  represents the smoothing effect. 

We recall that 
\begin{equation*}
\left|\chi_{\epsilon,b}^{(j)}(x)\right|\lesssim \chi_{\frac{\epsilon}{3},b+\epsilon}'(x) \quad \forall x\in \mathbb{R},\,j\in \mathbb{Z}^{+},
\end{equation*}
then 
\begin{equation*}
\begin{split}
\int_{0}^{T}|A_{2,2,2}(t)|\,\mathrm{d}t& \lesssim \int_{0}^{T}\int_{\mathbb{R}} \left(D_{x}^{1+\frac{1+\alpha}{2}}u\right)^{2}\chi_{\epsilon/3,b+\epsilon}'\,\mathrm{d}x\,\mathrm{d}t.\\ 
\end{split}
\end{equation*}
Taking $(\epsilon,b)=(\epsilon/9, b+10\epsilon/9)$ in (\ref{eq110}) combined with  the properties of the cutoff function  we have  
\begin{equation*}
\int_{0}^{T}|A_{2,2,2}(t)|\,\mathrm{d}t \lesssim c_{1,1}^{*}.
\end{equation*}
To finish the terms that   make  $A_{2}$  we  proceed to estimate $A_{2,2,3}.$  

 As usual  the low regularity is controlled by interpolation and the local theory. Therefore
\begin{equation*}
\begin{split}
\int_{0}^{T}|A_{2,2,3}(t)|\,\mathrm{d}t&\lesssim \|u\|_{L_{T}^{\infty}H_{x}^{\frac{3-\alpha}{2}}}.
\end{split}
\end{equation*} 
Next,
\begin{equation*}
\begin{split}
 A_{2,3}(t)  
&=\widetilde{c_{1}}\int_{\mathbb{R}}\left(\mathcal{H}D_{x}^{\frac{\alpha+1}{2}}\partial_{x}^{2}u\right)^{2}(\chi_{\epsilon, b}^{2})'\mathrm{d}x
-\widetilde{c_{3}}\int_{\mathbb{R}}\left(D_{x}^{\frac{\alpha+1}{2}}\partial_{x}u\right)^{2}(\chi_{\epsilon, b}^{2})^{(3)}\mathrm{d}x\\
&\quad+\left(\frac{\alpha+2}{64}\right)c_{5}\int_{\mathbb{R}}\left(D_{x}^{\frac{\alpha+1}{2}}\mathcal{H}u\right)^{2}(\chi_{\epsilon, b}^{2})^{(5)}\mathrm{d}x\\
&=A_{2,3,1}(t)+A_{2,3,2}(t)+A_{2,3,3}(t).
\end{split}
\end{equation*}
 $A_{2,3,1}$ is positive and it will provide  the smoothing effect after   being integrated in time.

The terms $A_{2,3,2}$ and $A_{2,3,3}$  can be handled exactly in the same way that were treated  $A_{2,2,2}$ and $A_{2,2,3}$ respectively, so we will omit the proof.
\S.3   	Finally,  
	\begin{equation*}
	\begin{split}
	A_{3}(t)
	&=\frac{5}{2}\int_{\mathbb{R}} \partial_{x}u(\partial_{x}^{2}u)^{2}\chi_{\epsilon,b}^{2}\, \mathrm{d}x
	-\frac{1}{2}\int_{\mathbb{R}} u(\partial_{x}^{2}u)^{2}(\chi_{\epsilon, b}^{2})'\,\mathrm{d}x\\
	&=A_{3,1}(t)+A_{3,2}(t).
		\end{split}
	\end{equation*}	
First,
\begin{equation}\label{eq10}
\begin{split}
|A_{3,1}(t)|\lesssim \|\partial_{x}u(t)\|_{L^{\infty}_{x}}\int_{\mathbb{R}}(\partial_{x}^{2}u)^{2} \chi_{\epsilon, b}^{2}\,\mathrm{d}x,
\end{split}
\end{equation}
by the local theory 	
   $\partial_{x}u\in L^{1}\left([0,T]:L_{x}^{\infty}(\mathbb{R})\right)$  (see Theorem \ref{C}-(b)); and the integral expression  is  the quantity we want estimate.
 
 Next,
 \begin{equation}\label{eq11}
 |A_{3,2}(t)|\lesssim \|u(t)\|_{L^{\infty}_{x}}\int_{\mathbb{R}} (\partial_{x}^{2}u)^{2}(\chi_{\epsilon, b}^{2})'\,\mathrm{d}x.
 \end{equation}
	After apply the  Sobolev embedding  and  integrate in  the time variable we obtain
	\begin{equation*}
	\begin{split}
	\int_{0}^{T}|A_{3,2}(t)|\,\mathrm{d}t &\lesssim \left(\sup_{0\leq t\leq T}\|u(t)\|_{{H}_{x}^{s(\alpha)+}}\right)\int_{0}^{T}\int_{\mathbb{R}}(\partial_{x}^{2}u)^{2}(\chi_{\epsilon, b}^{2})'\,\mathrm{d}x\,\mathrm{d}t,
	\end{split}	
	\end{equation*}
	and the integral term in the right hand side  was estimated   previously in  (\ref{m1}).
	
Thus, after grouping all the terms and apply  Gronwall's inequality  we obtain
	\begin{equation}\label{eq121}
	\begin{split}
		&\sup_{0\leq t\leq T}\left\|\partial_{x}^{2}u\chi_{\epsilon, b}\right\|_{L^{2}_{x}}^{2}
		+\left\|D_{x}^{\frac{\alpha+1}{2}}\partial_{x}^{2}u\eta_{\epsilon, b}\right\|_{L^{2}_{T}L^{2}_{x}}^{2}+\left\|D_{x}^{\frac{\alpha+1}{2}}\mathcal{H}\partial_{x}^{2}u\eta_{\epsilon,b}\right\|_{L^{2}_{T}L^{2}_{x}}^{2}\leq c^{*}_{2,1}
	\end{split}
\end{equation}
where $c^{*}_{2,1}=c^{*}_{2,1}\left(\alpha;\epsilon;T;v; \|u_{0}\|_{H_{x}^{\frac{3-\alpha}{2}}};\left\|\partial_{x}^{2}u_{0}\chi_{\epsilon, b}\right\|_{L_{x}^{2}}\right),$  for any $\epsilon>0,b\geq 5 \epsilon$ and   $v\geq0.$
\begin{flushleft}
	\textit{Step 2.}
\end{flushleft}
%
%
From equation  in (\ref{eq7})  one gets  after applying the operator $D_{x}^{\frac{1-\alpha}{2}}\partial_{x}^{2}$ and  multiplying  the result by $D_{x}^{\frac{1-\alpha}{2}}\partial_{x}^{2}u\chi_{\epsilon, b}^{2}(x+vt),$ 
 \begin{equation*}
 \begin{split}
 &D_{x}^{\frac{1-\alpha}{2}}\partial_{x}^{2}\partial_{t}uD_{x}^{\frac{1-\alpha}{2}}\partial_{x}^{2}u\chi_{\epsilon,b}^{2}-D_{x}^{\frac{1-\alpha}{2}}\partial_{x}^{2}D_{x}^{1+\alpha}\partial_{x}uD_{x}^{\frac{1-\alpha}{2}}\partial_{x}^{2}u\chi_{\epsilon, b}^{2}\\
 &+D_{x}^{\frac{1-\alpha}{2}}\partial_{x}^{2}(u\partial_{x}u)D_{x}^{\frac{1-\alpha}{2}}\partial_{x}^{2}u\chi_{\epsilon, b}^{2}=0
 \end{split}
 \end{equation*}
 which after integration in the spatial variable it  becomes 
 \begin{equation*}
\begin{split}
&\frac{1}{2}\frac{\mathrm{d}}{\mathrm{d}t}\int_{\mathbb{R}}\left(D_{x}^{\frac{1-\alpha}{2}}\partial_{x}^{2}u\right)^{2}\chi_{\epsilon, b}^{2}\,\mathrm{d}x\underbrace{-\frac{v}{2}\int_{\mathbb{R}}\left(D_{x}^{\frac{1-\alpha}{2}}\partial_{x}^{2}u\right)^{2}(\chi_{\epsilon, b}^{2})'\,\mathrm{d}x}_{ A_{1}(t)}\\
&\underbrace{-\int_{\mathbb{R}}\left(D_{x}^{\frac{1-\alpha}{2}}\partial_{x}^{2}D_{x}^{1+\alpha}\partial_{x}u\right)\left(D_{x}^{\frac{1-\alpha}{2}}\partial_{x}^{2}u\right)\chi_{\epsilon, b}^{2}\,\mathrm{d}x}_{ A_{2}(t)} \\
&\underbrace{+\int_{\mathbb{R}}\left(D_{x}^{\frac{1-\alpha}{2}}\partial_{x}^{2}( u\partial_{x}u)\right)\left(D_{x}^{\frac{1-\alpha}{2}}\partial_{x}^{2}u\right)\chi_{\epsilon, b}^{2}\,\mathrm{d}x}_{ A_{3}(t)}=0.
\end{split}
\end{equation*}
	To estimate $A_{1}$  we will use different techniques to  the  ones implemented  to bound   $A_{1}$ in the previous step. The main difficulty we have to face   is to  deal with the non-local character of the operator $D_{x}^{s}$ for $s\in\mathbb{R}^{+}\backslash 2\mathbb{N},$ the case $s\in 2\mathbb{N}$ is less complicated  because $D_{x}^{s}$ becomes local, so we can integrate by parts.
	
	The strategy to solve this issue will be the following. In \eqref{eq121} we proved that $u$ has 
 a gain of $\frac{\alpha+1}{2}$ derivatives (local) which in total sum $2+\frac{1+\alpha}{2}.$ This suggests that if we can find an appropriated channel  where we can  localize the smoothing effect, we shall be able to  recover all the local derivatives $r$  with $r\leq 2+\frac{1+\alpha}{2}.$  
  
 Henceforth  we will  employ recurrently a technique of localization of commutator used  by Kenig, Linares, Ponce and Vega \cite{KLPV} in the study of  propagation of regularity (fractional) for solutions of the k-generalized KdV equation. Indeed, the idea consists in constructing an appropriate system of  smooth partition of unit, localizing the regions where is available the information obtained in the previous cases.
  
  We recall that for $\epsilon>0$ and $b\geq 5\epsilon$
  \begin{equation}\label{eq61}
  \eta_{\epsilon,b}=\sqrt{\chi_{\epsilon, b}\chi_{\epsilon,b}'}\qquad \mbox{and}\qquad \chi_{\epsilon,b}+\phi_{\epsilon,b}+\psi_{\epsilon}=1.
  \end{equation}	

\S.1\quad  Claim
			\begin{equation}\label{d10}
			\left\|D_{x}^{\frac{1+\alpha}{2}}\partial_{x}^{2}(u\eta_{\epsilon,b})\right\|_{L_{T}^{2}L_{x}^{2}}<\infty.
			\end{equation}
	Combining the commutator estimate   (\ref{kpdl}), (\ref{eq61}), H\"{o}lder's inequality  and (\ref{eq121}) 
			 yields
			\begin{equation}\label{d1}
			\begin{split}
			\left\|D_{x}^{\frac{1+\alpha}{2}}\partial_{x}^{2}(u\eta_{\epsilon,b})\right\|_{L_{T}^{2}L_{x}^{2}}
			&\leq \left\|D_{x}^{2+\frac{1+\alpha}{2}}u \eta_{\epsilon,b}\right\|_{L_{T}^{2}L_{x}^{2}}+\left\|\left[D_{x}^{2+\frac{1+\alpha}{2}}; \eta_{\epsilon,b}\right]\left(u\chi_{\epsilon, b}+u\phi_{\epsilon,b}+u\psi_{\epsilon}\right)\right\|_{L_{T}^{2}L_{x}^{2}}\\
			&\lesssim \left(c^{*}_{2,1}\right)^{2} +\underbrace{\left\|D_{x}^{1+\frac{1+\alpha}{2}}(u\chi_{\epsilon, b})\right\|_{L^{2}_{T}L_{x}^{2}}}_{B_{1}}+\underbrace{\left\|D_{x}^{1+\frac{1+\alpha}{2}}(u\phi_{\epsilon, b})\right\|_{L^{2}_{T}L_{x}^{2}}}_{B_{2}}+\|u_{0}\|_{L_{x}^{2}}\\
			&\quad+\underbrace{\left\|\eta_{\epsilon,b
				}D_{x}^{2+\frac{1+\alpha}{2}}(u\psi_{\epsilon})\right\|_{L^{2}_{T}L_{x}^{2}}}_{B_{3}}.
			\end{split}
			\end{equation}
			Since ${\displaystyle \chi_{\epsilon/5,\epsilon}=1}$ on the support of $\chi_{\epsilon, b}$ then 
			\begin{equation*}
			\chi_{\epsilon,b}(x)\chi_{\epsilon/5, \epsilon}(x)=\chi_{\epsilon, b}(x)\quad \forall x\in \mathbb{R}.
			\end{equation*}
			Thus,
			combining  Lemma \ref{lema2} and  Young's inequality we obtain
		\begin{equation}\label{eq70}
			\begin{split}
			\left\|D_{x}^{1+\frac{1+\alpha}{2}}(u\chi_{\epsilon,b})\right\|_{L_{x}^{2}} 
		&\lesssim \|\partial_{x}^{2}u\chi_{\epsilon,b}\|_{L_{x}^{2}}+\|\partial_{x}u\chi_{\frac{\epsilon}{5}, \epsilon}\|_{L_{x}^{2}}+\|u_{0}\|_{L_{x}^{2}}.
		\end{split}
			\end{equation}
Then, by an application of    (\ref{eq121}) adapted  to  every case yields 
	\begin{equation}\label{eq71}
	\begin{split}
		B_{1}&\lesssim \|\partial_{x}^{2}u\chi_{\epsilon,b}\|_{L_{T}^{\infty}L_{x}^{2}}+\|\partial_{x}u\chi_{\frac{\epsilon}{5}, \epsilon}\|_{L_{T}^{\infty}L_{x}^{2}}+\|u_{0}\|_{L_{x}^{2}}
		\lesssim c^{*}_{2,1}+c^{*}_{1,1}+\|u_{0}\|_{L_{x}^{2}}.
	\end{split}
	\end{equation}
	Notice that $B_{2}$ was estimated in the case $j=1,$ step 2 see (\ref{eq73}), so we will omit the proof.
			Next, we recall that  by construction
			\begin{equation*}
			\dist\left(\supp\left(\eta_{\epsilon,b}\right), \supp\left(\psi_{\epsilon}\right)\right)\geq \frac{\epsilon}{2}.
			\end{equation*}
			Hence by Lemma \ref{lemma1} 
			\begin{equation}\label{d4}
			\begin{split}
			B_{3}&=\left\|\eta_{\epsilon,b}D_{x}^{2+\frac{1+\alpha}{2}}(u\psi_{\epsilon})\right\|_{L_{T}^{2}L_{x}^{2}}
			\lesssim \|\eta_{\epsilon,b}\|_{L_{T}^{\infty}L_{x}^{\infty}}\|u_{0}\|_{L_{x}^{2}}.
			\end{split}
			\end{equation}
			
		The claim follows gathering the calculations above.
		
		At this point we have proved that locally in the interval $[\epsilon,b]$ there exists $2+\frac{\alpha+1}{2}$ derivatives. By Lemma \ref{lema2} we get
		 \begin{equation*}
		 \left\|D_{x}^{2+\frac{1-\alpha}{2}}(u\eta_{\epsilon,b})\right\|_{L_{T}^{2}L_{x}^{2}}\lesssim \left\|D_{x}^{2+\frac{1+\alpha}{2}}(u\eta_{\epsilon,b})\right\|_{L_{T}^{2}L_{x}^{2}}+\|u_{0}\|_{L_{x}^{2}}<\infty.
		 \end{equation*}
		 As before 
		 \begin{equation*}
		 D_{x}^{2+\frac{1-\alpha}{2}}u\eta_{\epsilon,b}=D_{x}^{2+\frac{1-\alpha}{2}}\left(u\eta_{\epsilon,b}\right)-\left[D_{x}^{2+\frac{1-\alpha}{2}}; \eta_{\epsilon,b}\right] \left(u\chi_{\epsilon,b}+u \phi_{\epsilon,b}+ u\psi_{\epsilon}\right).
		 \end{equation*}
		  The argument  used in the proof of the claim  yields
		  \begin{equation*}
		 \left\|D_{x}^{2+\frac{1-\alpha}{2}}u\eta_{\epsilon,b}\right\|_{L^{2}_{T}L_{x}^{2}}<\infty.
		 \end{equation*}
		 Therefore,
		\begin{equation}\label{d16}
		\begin{split}
		\int_{0}^{T}|A_{1}(t)|\,\mathrm{d}t&\leq |v|\int_{0}^{T}\int_{\mathbb{R}}(D_{x}^{\frac{1-\alpha}{2}}\partial_{x}^{2}u)^{2}(\chi_{\epsilon, b}^{2})'\mathrm{d}x\,\mathrm{d}t\lesssim\left\|D_{x}^{2+\frac{1-\alpha}{2}}u\eta_{\epsilon,b}\right\|_{L_{T}^{2}L_{x}^{2}}^{2}<\infty.
		\end{split}
		\end{equation}
%
\S{.2} \quad Now we focus our attention in the term $A_{2}.$ 	Notice that  after integration by parts and Plancherel's identity
	\begin{equation}\label{eq8.15.1}
	\begin{split}
	A_{2}(t)
	&=-\frac{1}{2}\int_{\mathbb{R}}D_{x}^{\frac{5-\alpha}{2}}u\left[\mathcal{H}D_{x}^{2+\alpha}; \chi_{\epsilon, b}^{2}\right] D_{x}^{\frac{5-\alpha}{2}}u\,\mathrm{d}x.
	\end{split}
	\end{equation}
		The procedure to decompose the commutator will be almost  similar to the introduced in the previous step, the main difference  relies on the fact that the quantity of derivatives is higher in comparison with the step 1.
	
	Concerning this, we  notice that  $2+\alpha>1$ and  by (\ref{eq8}) the commutator  ${\displaystyle \left[\mathcal{H}D_{x}^{\alpha+2};\chi_{\epsilon, b}^{2}\right]}$ can be decomposed as
	\begin{equation}\label{eq8.14.1}
	\left[\mathcal{H}D_{x}^{\alpha+2};\chi_{\epsilon, b}^{2}\right]+\frac{1}{2}P_{n}(\alpha+2)+R_{n}(\alpha+2)=\frac{1}{2}\mathcal{H}P_{n}(\alpha+2)\mathcal{H}
	\end{equation}
	for some positive integer $n.$ We shall fix the value of $n$
satisfying a suitable condition. 
	
	Replacing (\ref{eq8.14.1}) into (\ref{eq8.15.1})  produces
	\begin{equation}\label{eq8.14.2}
	\begin{split}
	A_{2}(t)&=\frac{1}{2}\int_{\mathbb{R}}D_{x}^{\frac{5-\alpha}{2}}u\left(R_{n}(\alpha+2)D_{x}^{\frac{5-\alpha}{2}}u\right)\,\mathrm{d}x+
	\frac{1}{4}\int_{\mathbb{R}}D_{x}^{\frac{5-\alpha}{2}}u\left(P_{n}(\alpha+2)D_{x}^{\frac{5-\alpha}{2}}u\right)\,\mathrm{d}x\\
	&\quad-\frac{1}{4}\int_{\mathbb{R}}D_{x}^{\frac{5-\alpha}{2}}u\left(\mathcal{H}P_{n}(\alpha+2)\mathcal{H}D_{x}^{\frac{5-\alpha}{2}}u\right)\,\mathrm{d}x\\
	&=A_{2,1}(t)+A_{2,2}(t)+A_{2,3}(t).
	\end{split}
	\end{equation}
	Now we proceed to fix the value of $n$ present in  $A_{2,1},A_{2,2}$ and $A_{2,3}.$ 

	First we deal with the term  that determine the value $n$    in the decomposition 
	associated to $A_{2}.$ In this case it corresponds to   $A_{2,1}.$ 
	
	Applying  Plancherel's identity, $A_{2,1}$ becomes 
	\begin{equation*}
	\begin{split}
	A_{2,1}(t)&=\frac{1}{2}\int_{\mathbb{R}}uD_{x}^{\frac{5-\alpha}{2}}\left\{R_{n}(\alpha+2)D_{x}^{\frac{5-\alpha}{2}}u\right\}\,\mathrm{d}x
	\end{split}
	\end{equation*}
	We   fix $n$ such that  it satisfies (\ref{eq21}) i.e.,
	\begin{equation*}
	2n+1\leq a+2\sigma\leq 2n+3
	\end{equation*} 
 with $a=\alpha+2$ and $\sigma=\frac{5-\alpha}{2},$ which  produces $n=2$ or $n=3.$  Nevertheless, for the sake of simplicity we take  $n=2.$ 
 
Hence, by construction $R_{2}(\alpha+2)$ is bounded in $L^{2}_{x}$ (see Proposition \ref{propo2}).

	Thus,
	\begin{equation*}
	\begin{split}
	\int_{0}^{T}|A_{2,1}(t)|\,\mathrm{d}t&\leq c \int_{0}^{T}\|u(t)\|_{L^{2}_{x}}^{2}\left\|\widehat{D_{x}^{7}(\chi_{\epsilon, b}^{2}(\cdot+vt))}\right\|_{L^{1}_{\xi}}\,\mathrm{d}t
	\lesssim\|u_{0}\|_{L^{2}_{x}}^{2}\sup_{0\leq t\leq T}\left\|\widehat{D_{x}^{7}(\chi_{\epsilon, b}^{2})}\right\|_{L^{1}_{\xi}}.
	\end{split}
	\end{equation*} 	
	Since 	we have   fixed $n=2,$ we obtain after replace $P_{2}(\alpha+2)$  into $A_{2,2}$  
	\begin{equation*}
	\begin{split}
	A_{2,2}(t) 
	&=\widetilde{c_{1}}\int_{\mathbb{R}}\left(\mathcal{H}\partial_{x}^{3}u\right)^{2}(\chi_{\epsilon, b}^{2})'\;\mathrm{d}x-\widetilde{c_{3}}\int_{\mathbb{R}}\left(\partial_{x}^{2}u\right)^{2}(\chi_{\epsilon, b}^{2})^{(3)}\,\mathrm{d}x\\
	&\quad +\widetilde{c_{5}}\int_{\mathbb{R}}\left(\mathcal{H}\partial_{x}u\right)^{2}(\chi_{\epsilon, b}^{2})^{(5)}\,\mathrm{d}x\\
	&=A_{2,2,1}(t)+A_{2,2,2}(t)+A_{2,2,3}(t).
	\end{split}
	\end{equation*}	
		We underline  that $A_{2,2,1}$ is positive and represents the smoothing effect.
	
	 On the other hand, by (\ref{eq120}) with $(\epsilon,b)=(\epsilon/5,\epsilon)$ we have 
%
	\begin{equation}\label{d12}
	\begin{split}
	\int_{0}^{T}|A_{2,2,2}(t)|\,\mathrm{d}t &=c\int_{0}^{T}\int_{\mathbb{R}}(\partial_{x}^{2}u)^{2}\chi_{\frac{\epsilon}{5},\epsilon}^{2}(\chi_{\epsilon,b}^{2})'''\,\mathrm{d}x\,\mathrm{d}t
	\lesssim \sup_{0\leq t\leq T}\int_{\mathbb{R}}(\partial_{x}^{2}u)^{2} \chi_{\frac{\epsilon}{5},\epsilon}^{2}\mathrm{d}x\lesssim c^{*}_{1,2}.
		\end{split}
	\end{equation}
	Next, by the local theory
	\begin{equation}\label{d13}
	\begin{split}
		\int_{0}^{T}|A_{2,2,3}(t)|\,\mathrm{d}t&\lesssim \|u\|_{L_{T}^{\infty}H_{x}^{\frac{3-\alpha}{2}}}.
	\end{split}
		\end{equation}
After replacing $P_{2}(\alpha+2)$  into     $A_{2,3},$ 
	 and   using  the fact  that Hilbert transform is skew adjoint 
	\begin{equation*}
	\begin{split}
	A_{2,3}(t) &
	=\left(\frac{\alpha+2}{4}\right)\int_{\mathbb{R}} (\partial_{x}^{3}u)^{2}(\chi_{\epsilon, b}^{2})'\,\mathrm{d}x-c_{3}\left(\frac{\alpha+2}{16}\right)\int_{\mathbb{R}}(\mathcal{H}\partial_{x}^{2}u)^{2}(\chi_{\epsilon, b}^{2})'''\,\mathrm{d}x\\
	&\quad +c_{5}\left(\frac{\alpha+2}{64}\right)\int_{\mathbb{R}}(\partial_{x}u)^{2}(\chi_{\epsilon, b}^{2})^{(5)}\,\mathrm{d}x\\
	&=A_{2,3,1}(t)+A_{2,3,2}(t)+A_{2,3,3}(t).
 	\end{split}
	\end{equation*}
Notice that    $A_{2,3,1}\geq 0$  and it  represents the smoothing effect. However, $A_{2,3,2}$ can be handled if we take $(\epsilon,b)=(\epsilon/5,\epsilon)$ in (\ref{eq110})  as follows 
\begin{equation*}
\begin{split}
A_{2,3,3}(t)&=\int_{\mathbb{R}}(\partial_{x}^{2}u)^{2}\chi_{\frac{\epsilon}{5}, \epsilon}^{2}(\chi_{\epsilon,b }^{2})'''\,\mathrm{d}x
\lesssim \int_{\mathbb{R}}(\partial_{x}^{2}u)^{2}\chi_{\frac{\epsilon}{5}, \epsilon}^{2}\,\mathrm{d}x,
\end{split}
\end{equation*}
thus,
\begin{equation*}
\int_{0}^{T}|A_{2,3,3}(t)|\,\mathrm{d}t\lesssim \sup_{0\leq t\leq  T}\int_{\mathbb{R}} (\partial_{x}u)^{2}\chi_{\frac{\epsilon}{5}, \epsilon}^{2}\,\mathrm{d}x\lesssim c_{1,1}^{*}.
\end{equation*}
To finish the estimate of   $A_{2}$ only remains  to bound  $A_{2,3,2}.$ To do this we recall that 
\begin{equation*}
\left|\chi_{\epsilon,b }^{(j)}(x)\right|\lesssim \chi_{\epsilon/3,b+\epsilon}'(x),\quad \forall x\in \mathbb{R}, j\in\mathbb{Z}^{+},
\end{equation*}
 that joint with the property (9) of $\chi_{\epsilon, b}$ yields
 \begin{equation*}
 \begin{split}
 \int_{0}^{T} \int_{\mathbb{R}}\left(\mathcal{H}\partial_{x}^{2}u\right)^{2}\chi_{\epsilon/3,b+\epsilon}'\,\mathrm{d}x\,\mathrm{d}t& \lesssim \left\|\mathcal{H}\partial_{x}^{2}u\eta_{\epsilon/9,b+10\epsilon/9}\right\|_{L^{2}_{T}L^{2}_{x}}^{2} \lesssim c^{*}_{1,2},
 \end{split}
 \end{equation*} 
 where the last inequality is obtained taking $(\epsilon,b)=(\epsilon/9,b+10\epsilon/9)$ in (\ref{eq120}).
 The term $A_{2,3,3}$ can be handled by interpolation and the local theory.

\S.3\quad Finally we turn our attention to $A_{3}$.We start rewriting the nonlinear part as follows
\begin{equation}\label{d14}
\begin{split}
&D_{x}^{\frac{1-\alpha}{2}}\partial_{x}^{2}\left(u\partial_{x}u\right)\chi_{\epsilon, b}\\
&=-\frac{1}{2}\left[D_{x}^{\frac{1-\alpha}{2}}\partial_{x}^{2};\chi_{\epsilon, b}\right]\partial_{x}\left((u\chi_{\epsilon,b})^{2}+(u\widetilde{\phi_{\epsilon,b}})^{2}+(\psi_{\epsilon}u^{2})\right)\\
&\quad +\left[D_{x}^{\frac{1-\alpha}{2}}\partial_{x}^{2}; u\chi_{\epsilon, b}\right]\partial_{x}\left((u\chi_{\epsilon, b})+ (u\phi_{\epsilon,b})+(u\psi_{\epsilon})\right)
+ u\chi_{\epsilon, b}D_{x}^{\frac{1-\alpha}{2}}\partial_{x}^{3}u\\
&=\widetilde{A_{3,1}}(t)+\widetilde{A_{3,2}}(t)+\widetilde{A_{3,3}}(t)+\widetilde{A_{3,4}}(t)+\widetilde{A_{3,5}}(t)+\widetilde{A_{3,6}}(t)+\widetilde{A_{3,7}}(t).
\end{split}
\end{equation}
Hence,  after replacing (\ref{d13}) into $A_{3}$ and apply H\"{o}lder's inequality
\begin{equation*}
\begin{split}
&A_{3}(t)\\
&=\sum_{1\leq m\leq 6} \int_{\mathbb{R}}\widetilde{A_{3,m}}(t)\,D_{x}^{\frac{1-\alpha}{2}}\partial_{x}^{2}u\,\chi_{\epsilon,b}\,\mathrm{d}x+\int_{\mathbb{R}}\widetilde{A_{3,7}}(t)\,D_{x}^{\frac{1-\alpha}{2}}\partial_{x}^{2}u\,\chi_{\epsilon,b}\,\mathrm{d}x\\
&\leq \sum_{1\leq m \leq 6}\|\widetilde{A_{3,m}}(t)\|_{L_{x}^{2}}\left\|D_{x}^{2+\frac{1-\alpha}{2}}u(t)\chi_{\epsilon,b}(\cdot+vt)\right\|_{L_{x}^{2}}+\int_{\mathbb{R}}\widetilde{A_{3,7}}(t)\,D_{x}^{\frac{1-\alpha}{2}}\partial_{x}^{2}u\,\chi_{\epsilon,b}\,\mathrm{d}x\\
&=\left\|D_{x}^{2+\frac{1-\alpha}{2}}u(t)\chi_{\epsilon,b}(\cdot+vt)\right\|_{L_{x}^{2}}\sum_{1\leq m\leq 6}A_{3,m}(t)+ A_{3,7}(t).
\end{split}
\end{equation*}
Notice that the first factor  in the right hand side is the quantity to be estimated by Gronwall's inequality. So, we  shall focus   on establish  control in the remaining terms. 

First,  combining \eqref{eq5}, \eqref{kpdl} and Lemma \ref{lema2} one gets  that 
\begin{equation}\label{d17}
\begin{split}
\|\widetilde{A_{3,1}}(t)\|_{L^{2}_{x}}
&\lesssim  \left\|D_{x}^{2+\frac{1-\alpha}{2}}(u\chi_{\epsilon,b})\right\|_{L^{2}_{x}}\|u\|_{L^{\infty}_{x}}+\|u_{0}\|_{L_{x}^{2}}\|u\|_{L^{\infty}_{x}},
\end{split}
\end{equation}
and 
\begin{equation}\label{d15}
\begin{split}
\|\widetilde{A_{3,2}}(t)\|_{L^{2}_{x}}
&\lesssim   \left\|D_{x}^{2+\frac{1-\alpha}{2}}(u\widetilde{\phi_{\epsilon,b}})\right\|_{L^{2}_{x}}\|u\|_{L^{\infty}_{x}}+\|u_{0}\|_{L_{x}^{2}}\|u\|_{L^{\infty}_{x}}.
\end{split}
\end{equation}
To finish with the  quadratic terms, we employ Lemma \ref{lemma1}
\begin{equation*}
\begin{split}
\|\widetilde{A_{3,3}}(t)\|_{L^{2}_{x}}
&\lesssim\|u_{0}\|_{L^{2}_{x}}\|u\|_{L^{\infty}_{x}}.
\end{split}
\end{equation*}
Combining  (\ref{eq31}) and (\ref{kpdl}) we  obtain
\begin{equation*}
\begin{split}
\|\widetilde{A_{3,4}}(t)\|_{L^{2}_{x}}
&\lesssim\|\partial_{x}(u\chi_{\epsilon,b})\|_{L^{\infty}_{x}}\left\|D_{x}^{2+\frac{1-\alpha}{2}}(u\chi_{\epsilon,b})\right\|_{L^{2}_{x}}.
\end{split}
\end{equation*}
Meanwhile,
\begin{equation*}
\begin{split}
\|\widetilde{A_{3,5}}(t)\|_{L^{2}_{x}}
&\lesssim\|\partial_{x}(u\chi_{\epsilon,b})\|_{L^{\infty}_{x}}\left\|D_{x}^{2+\frac{1-\alpha}{2}}(u\phi_{\epsilon,b})\right\|_{L^{2}_{x}}+\|\partial_{x}(u\phi_{\epsilon,b})\|_{L^{\infty}_{x}}\left\|D_{x}^{2+\frac{1-\alpha}{2}}(u\chi_{\epsilon,b})\right\|_{L^{2}_{x}}.
\end{split}
\end{equation*}
	Next, we recall that  by construction
\begin{equation*}
\dist\left(\supp\left(\chi_{\epsilon, b}\right), \supp\left(\psi_{\epsilon}\right)\right)\geq \frac{\epsilon}{2}.
\end{equation*}
Thus by    Lemma \ref{lemma1} 
\begin{equation*}
\begin{split}
\|\widetilde{A_{3,6}}(t)\|_{L^{2}_{x}}
&\lesssim\|u_{0}\|_{L^{2}_{x}}\|u\|_{L^{\infty}_{x}}.
\end{split}
\end{equation*}
To complete the estimates in  (\ref{d17})-(\ref{d15}) only remains  to bound $\left\|D_{x}^{2+\frac{1-\alpha}{2}}\left(u\chi_{\epsilon,b}\right)\right\|_{L_{x}^{2}},$\linebreak
$\left\|D_{x}^{2+\frac{1-\alpha}{2}}\left(u\widetilde{\phi_{\epsilon,b}}\right)\right\|_{L_{x}^{2}},$ and $\left\|D_{x}^{2+\frac{1-\alpha}{2}}(u\phi_{\epsilon,b})\right\|_{L_{x}^{2}}.$ 

 For the first term  we proceed by writing 
\begin{equation*}
\begin{split}
D_{x}^{2+\frac{1-\alpha}{2}}(u\chi_{\epsilon, b})&=D_{x}^{2+\frac{1-\alpha}{2}}u \chi_{\epsilon, b}+\left[D_{x}^{2+\frac{1-\alpha}{2}}; \chi_{\epsilon, b}\right]\left(u\chi_{\epsilon, b}+u\phi_{\epsilon, b}+u\psi_{\epsilon}\right)\\
&= I_{1}+I_{2}+I_{3}+I_{4}.
\end{split}
\end{equation*}
Notice that $\|I_{1}\|_{L_{x}^{2}}$ is the quantity to be estimated by Gronwall's inequality. Meanwhile,  $\|I_{2}\|_{L_{x}^{2}},$\, $\|I_{3}\|_{L_{x}^{2}}$ and $\|I_{4}\|_{L_{x}^{2}}$ were estimated previously in the case $j=1,$ step 2.

Next, we focus on  estimate the term  $\left\|D_{x}^{2+\frac{1+\alpha}{2}}\left(u\phi_{\epsilon,b}\right)\right\|_{L_{x}^{2}}$ 
which will be treated by means of  H\"{o}lder's inequality  and Theorem \ref{thm11} as follows
\begin{equation*}
\begin{split}
&\left\|D_{x}^{2+\frac{1+\alpha}{2}}(u\phi_{\epsilon,b})\right\|_{L^{2}_{x}}\\
&\lesssim \|u_{0}\|_{L_{x}^{2}}^{1/2}\|u\|_{L_{x}^{\infty}}^{1/2}+ \left\|\eta_{\epsilon/24,b+7\epsilon/24} D_{x}^{2+\frac{1+\alpha}{2}}u\right\|_{L_{x}^{2}}+\left\|\eta_{\epsilon/24,b+7\epsilon/24} D_{x}^{\frac{1+\alpha}{2}}\partial_{x}u\right\|_{L_{x}^{2}}\\
&\quad +\left\|D_{x}^{\frac{1+\alpha}{2}}u\right\|_{L_{x}^{2}}.
\end{split}
\end{equation*}
After integrate in time, the second  and third term on  the right hand side  can be estimated taking $(\epsilon,b)=(\epsilon/24,b+7\epsilon/24 )$ in (\ref{eq121}) and  (\ref{eq110}) respectively. Hence, after integrate in time follows  by interpolation that $\left\|D_{x}^{2+\frac{1-\alpha}{2}}(u\phi_{\epsilon,b})\right\|_{L^{2}_{T}L_{x}^{2}}<\infty.$

Analogously can be bounded  $\left\|D_{x}^{2+\frac{1-\alpha}{2}}(u\widetilde{\phi_{\epsilon,b}})\right\|_{L_{x}^{2}}$.
%

\S.3\quad  Finally, after apply integration by parts
\begin{equation*}
\begin{split}
A_{3,7}(t)
&=-\frac{1}{2}\int_{\mathbb{R}}\partial_{x}u\,\chi_{\epsilon, b}^{2}\left(D_{x}^{\frac{1-\alpha}{2}}\partial_{x}^{2}u\right)^{2}\,\mathrm{d}x-\int_{\mathbb{R}}u\chi_{\epsilon, b} \,\chi_{\epsilon, b}'\left(D_{x}^{\frac{1-\alpha}{2}}\partial_{x}^{2}u\right)^{2}\,\mathrm{d}x\\
&=A_{3,7,1}(t)+A_{3,7,2}(t).
\end{split}
\end{equation*}
First, 
\begin{equation*}
|A_{3,7,1}(t)| \lesssim\|\partial_{x}u(t)\|_{L^{\infty}_{x}}\int_{\mathbb{R}}\left(D_{x}^{\frac{1-\alpha}{2}}\partial_{x}^{2}u\right)^{2}\chi_{\epsilon, b}^{2}\,\mathrm{d}x,
\end{equation*}
where the last integral  is the quantity that will be estimated using Gronwall's inequality, and the other factor will be controlled after integration in time.

After integration in time and   Sobolev's embedding it follows that 
\begin{equation*}
\begin{split}
\int_{0}^{T}|A_{3,7,2}(t)|\,\mathrm{d}t&\lesssim\int_{0}^{T}\int_{\mathbb{R}}u\,(\chi_{\epsilon, b}^{2})'\left(D_{x}^{\frac{1-\alpha}{2}}\partial_{x}^{2}u\right)^{2}\mathrm{d}x\,\mathrm{d}t\\
&\lesssim\left(\sup_{0\leq t\leq T}\|u(t)\|_{H^{s(\alpha)+}_{x}}\right)\int_{0}^{T}\int_{\mathbb{R}}\left(D_{x}^{\frac{1-\alpha}{2}}\partial_{x}^{2}u\right)^{2}\,(\chi_{\epsilon, b}^{2})'\mathrm{d}x\,\mathrm{d}t
\end{split}
\end{equation*}
and the last term was already estimated in (\ref{d16}).

Thus, after collecting all the information in this step and applying  Gronwall's inequality together with  hypothesis (\ref{clave1}), we obtain 
\begin{equation*}
\begin{split}
&\sup_{0\leq t\leq T}\left\|D_{x}^{\frac{1-\alpha}{2}}\partial_{x}^{2}u\chi_{\epsilon,b}\right\|_{L^{2}_{x}}^{2}
+\left\|\partial_{x}^{3}u\eta_{\epsilon,b}\right\|_{L^{2}_{T}L^{2}_{x}}^{2}+\left\| \mathcal{H}\partial_{x}^{3}u\eta_{\epsilon,b}\right\|_{L^{2}_{T}L^{2}_{x}}^{2}\leq c^{*}_{2,2}
\end{split}
\end{equation*}
where $c^{*}_{2,2}=c^{*}_{2,2}\left(\alpha;\epsilon;T;v;\|u_{0}\|_{H_{x}^{\frac{3-\alpha}{2}}};\left\|D_{x}^{\frac{1-\alpha}{2}}\partial_{x}^{2}u_{0}\chi_{\epsilon, b}\right\|_{L_{x}^{2}}\right)$
 for any $\epsilon>0,\,b\geq 5\epsilon$ and $v>0.$

 According to the induction argument  we shall assume that  (\ref{l1}) holds for $j\leq m$ with $j\in \mathbb{Z}$ and $j\geq 2,$ i.e.
\begin{equation}\label{l2}
\begin{split}
&\sup_{0\leq t\leq T}\left\|\partial_{x}^{j}u\chi_{\epsilon,b}\right\|_{L^{2}_{x}}^{2}+\left\|D_{x}^{\frac{1+\alpha}{2}}\partial_{x}^{j}u\eta_{\epsilon,b}\right\|_{L^{2}_{T}L^{2}_{x}}^{2}+\left\|\mathcal{H}D_{x}^{\frac{1+\alpha}{2}}\partial_{x}^{j}u\eta_{\epsilon,b}\right\|_{L^{2}_{T}L^{2}_{x}}^{2}\leq c^{*}_{j,1}
\end{split}
\end{equation}	
for $j=1,2,\dots,m$ with $m\geq 1,$ for any $\epsilon>0,\,b\geq 5\epsilon$ \, $v\geq0.$
\begin{flushleft}
\underline{Step 2}
\end{flushleft}
We will assume  $j$  an even integer. The case where $j$ is odd follows by an argument similar  to the case $j=1.$

By   an analogous reasoning to one  employed in the case $j=2$ it   follows that 
\begin{equation*}
\begin{split}
&D_{x}^{\frac{1-\alpha}{2}}\partial_{x}^{j}\partial_{t}uD_{x}^{\frac{1-\alpha}{2}}\partial_{x}^{j}u\chi_{\epsilon,b}^{2}-D_{x}^{\frac{1-\alpha}{2}}\partial_{x}^{j}D_{x}^{1+\alpha}\partial_{x}uD_{x}^{\frac{1-\alpha}{2}}\partial_{x}^{j}u\chi_{\epsilon, b}^{2}\\
&+D_{x}^{\frac{1-\alpha}{2}}\partial_{x}^{j}(u\partial_{x}u)D_{x}^{\frac{1-\alpha}{2}}\partial_{x}^{j}u\chi_{\epsilon, b}^{2}=0
\end{split}
\end{equation*}
which after integrating in time yields the identity
\begin{equation}\label{eqa1}
\begin{split}
&\frac{1}{2}\frac{\mathrm{d}}{\mathrm{d}t}\int_{\mathbb{R}}\left(D_{x}^{\frac{1-\alpha}{2}}\partial_{x}^{j}u\right)^{2}\chi_{\epsilon, b}^{2}\,\mathrm{d}x\underbrace{-\frac{v}{2}\int_{\mathbb{R}}\left(D_{x}^{\frac{1-\alpha}{2}}\partial_{x}^{j}u\right)^{2}(\chi_{\epsilon, b}^{2})'\,\mathrm{d}x}_{A_{1}(t)}\\
&\underbrace{-\int_{\mathbb{R}}\left(D_{x}^{\frac{1-\alpha}{2}}\partial_{x}^{j}D_{x}^{1+\alpha}\partial_{x}u\right)\left(D_{x}^{\frac{1-\alpha}{2}}\partial_{x}^{j}u \chi_{\epsilon, b}^{2}\right)\,\mathrm{d}x}_{A_{2}(t)} \\
&\underbrace{+\int_{\mathbb{R}}D_{x}^{\frac{1-\alpha}{2}}\partial_{x}^{j}( u\partial_{x}u)\left(D_{x}^{\frac{1-\alpha}{2}}\partial_{x}^{j}u \chi_{\epsilon, b}^{2}\right)\,\mathrm{d}x}_{A_{3}(t)}=0.
\end{split}
\end{equation}
\S.1  We claim that 
	\begin{equation}\label{d10.1}
	\left\|D_{x}^{j+\frac{1+\alpha}{2}}(u \eta_{\epsilon,b})\right\|_{L_{T}^{2}L_{x}^{2}}<\infty.
	\end{equation}
	We proceed as in  the case $j=2.$
A	combination of  the commutator estimate   (\ref{kpdl}), (\ref{eq61}), H\"{o}lder's inequality  and (\ref{l2}) 
yields
\begin{equation}\label{d1.1}
\begin{split}
&\left\|D_{x}^{\frac{1+\alpha}{2}}\partial_{x}^{j}(u\eta_{\epsilon,b})\right\|_{L_{T}^{2}L_{x}^{2}}\\
&\leq \left\|D_{x}^{j+\frac{1+\alpha}{2}}u \eta_{\epsilon,b}\right\|_{L_{T}^{2}L_{x}^{2}}+\left\|\left[D_{x}^{j+\frac{1+\alpha}{2}}; \eta_{\epsilon,b}\right]\left(u\chi_{\epsilon, b}+u\phi_{\epsilon,b}+u\psi_{\epsilon}\right)\right\|_{L_{T}^{2}L_{x}^{2}}\\
&\lesssim (c^{*}_{j,1} )^{2} +\underbrace{\left\|D_{x}^{j-1+\frac{1+\alpha}{2}}(u\chi_{\epsilon, b})\right\|_{L^{2}_{T}L_{x}^{2}}}_{B_{1}}+ \|u_{0}\|_{L_{x}^{2}}+\underbrace{\left\|D_{x}^{j-1+\frac{1+\alpha}{2}}(u\phi_{\epsilon, b})\right\|_{L^{2}_{T}
		L_{x}^{2}}}_{B_{2}}\\
	&\quad +\underbrace{\left\|\eta_{\epsilon,b
	}D_{x}^{j+\frac{1+\alpha}{2}}(u\psi_{\epsilon})\right\|_{L^{2}_{T}L_{x}^{2}}}_{B_{3}}.
\end{split}
\end{equation}
Since ${\displaystyle \chi_{\epsilon/5,\epsilon}=1}$ on the support of $\chi_{\epsilon, b}$ then 
\begin{equation*}
\chi_{\epsilon,b}(x)\chi_{\epsilon/5, \epsilon}(x)=\chi_{\epsilon, b}(x)\quad \forall x\in \mathbb{R}.
\end{equation*}
Combining  Lemma \ref{lema2} and  Young's inequality
	\begin{equation}\label{d2.1}
\begin{split}
\left\|D_{x}^{j+\frac{\alpha-1}{2}}\left(u\chi_{\epsilon, b}\right)\right\|_{L_{x}^{2}}
&\lesssim  \left\|\partial_{x}^{j}u\chi_{\epsilon,b}\right\|_{L_{x}^{2}}^{2}+\sum_{2\leq k\leq j-1} \gamma_{k,j}\left\|\chi_{\epsilon,b}^{(j-k)}\right\|_{L_{x}^{\infty}}\left\|\partial_{x}^{k}u\chi_{\epsilon/5, \epsilon}\right\|_{L_{x}^{2}}\\
& \quad +\|u\|_{L_{T}^{\infty}H_{x}^{\frac{3-\alpha}{2}}}+\|u_{0}\|_{L_{x}^{2}}.
\end{split}
\end{equation}
Hence, taking $(\epsilon,b)=(\epsilon/5,\epsilon)$ in (\ref{l2}) yields
 
\begin{equation}\label{eq71.1}
\begin{split}
B_{1}&\lesssim c^{*}_{j,1}+\sum_{2\leq k\leq j-1} \gamma_{k,j} c^{*}_{k,1}+\|u\|_{L_{T}^{\infty}H_{x}^{\frac{3-\alpha}{2}}}+\|u_{0}\|_{L_{x}^{2}}.
\end{split}
\end{equation}
$B_{2}$  can be estimated as in the step 2 of the case $j-1, $ so is bounded by the induction hypothesis.
		
	Next, since 
	\begin{equation*}
	\dist\left(\supp\left(\eta_{\epsilon,b}\right), \supp\left(\psi_{\epsilon}\right)\right)\geq \frac{\epsilon}{2}
	\end{equation*}
		we have by Lemma \ref{lemma1} 
	\begin{equation*}
	\begin{split}
\left\|\eta_{\epsilon,b}D_{x}^{j+\frac{\alpha+1}{2}}(u\psi_{\epsilon})\right\|_{L_{x}^{2}}&=\left\|\eta_{\epsilon,b}D_{x}^{j+\frac{1+\alpha}{2}}(u\psi_{\epsilon})\right\|_{L_{x}^{2}}\lesssim \left\|\eta_{\frac{\epsilon}{8},b+\epsilon}\right\|_{L_{x}^{\infty}}\|u_{0}\|_{L_{x}^{2}}.\\
	\end{split}
	\end{equation*}
	Gathering the estimates above     follows the claim 1. 

 We have proved that locally in the interval $[\epsilon,b]$ there exists $j+\frac{\alpha+1}{2}$ derivatives. So, by Lemma \ref{lema2} we obtain 
\begin{equation*}
\left\|D_{x}^{j+\frac{1-\alpha}{2}}(u\eta_{\epsilon,b})\right\|_{L_{T}^{2}L_{x}^{2}}\lesssim 
 \left\|D_{x}^{j+\frac{1+\alpha}{2}}(u\eta_{\epsilon,b})\right\|_{L_{T}^{2}L_{x}^{2}}+\|u_{0}\|_{L_{x}^{2}},
\end{equation*}
then,  as before 
\begin{equation*}
D_{x}^{j+\frac{1-\alpha}{2}}u\eta_{\epsilon,b}=c_{j}D_{x}^{j+\frac{1-\alpha}{2}}(u\eta_{\epsilon,b})-c_{j}\left[D_{x}^{j+\frac{1-\alpha}{2}}; \eta_{\epsilon,b}\right] \left(u\chi_{\epsilon,b}+u \phi_{\epsilon,b}+ u\psi_{\epsilon}\right),
\end{equation*}
where $c_{j}$ is a constant depending only on $j.$

Hence, if we proceed as in the proof of claim 1 we have 
\begin{equation}\label{eq41}
\left\|D_{x}^{j+\frac{1-\alpha}{2}}u\eta_{\epsilon,b}\right\|_{L^{2}_{T}L_{x}^{2}}<\infty.
\end{equation}
Therefore  
\begin{equation*}\label{d6.1}
\begin{split}
\int_{0}^{T}|A_{1}(t)|\,\mathrm{d}t &=v \left\|D_{x}^{j+\frac{1-\alpha}{2}}u\eta_{\epsilon,b}\right\|_{L_{T}^{2}L_{x}^{2}}^{2}<\infty.
\end{split}
\end{equation*} 
\S.2\quad 	To handle the  term   $A_{2}$ we use  the same procedure as in the  previous steps.

First,
	\begin{equation}\label{eq8.15.1.1.1}
	\begin{split}
	A_{2}(t)
	&=-\frac{1}{2}\int_{\mathbb{R}}D_{x}^{\frac{2j+1-\alpha}{2}}u\left[\mathcal{H}D_{x}^{2+\alpha}; \chi_{\epsilon, b}^{2}\right] D_{x}^{\frac{2j+1-\alpha}{2}}u\,\mathrm{d}x.
	\end{split}
	\end{equation}
Since
\begin{equation}\label{eq8.14.1.1.1}
\begin{split}
\left[\mathcal{H}D_{x}^{\alpha+2};\chi_{\epsilon, b}^{2}\right]+\frac{1}{2}P_{n}(\alpha+2)+R_{n}(\alpha+2)&=\frac{1}{2}\mathcal{H}P_{n}(\alpha+2)\mathcal{H}\\
\end{split}
\end{equation}
for some positive integer $n$. Replacing (\ref{eq8.14.1.1.1}) into (\ref{eq8.15.1.1.1})  produces
\begin{equation}\label{eq8.14.2.1}
\begin{split}
A_{2}(t)&
=\frac{1}{2}\int_{\mathbb{R}}D_{x}^{\frac{2j+1-\alpha}{2}}u\left(R_{n}(\alpha+2)D_{x}^{\frac{2j+1-\alpha}{2}}u\right)\,\mathrm{d}x\\
&\quad+
\frac{1}{4}\int_{\mathbb{R}}D_{x}^{\frac{2j+1-\alpha}{2}}u\left(P_{n}(\alpha+2)D_{x}^{\frac{2j+1-\alpha}{2}}u\right)\,\mathrm{d}x\\
&\quad -\frac{1}{4}\int_{\mathbb{R}}D_{x}^{\frac{2j+1-\alpha}{2}}u\left(\mathcal{H}P_{n}(\alpha+2)\mathcal{H}D_{x}^{\frac{2j+1-\alpha}{2}}u\right)\,\mathrm{d}x\\
&=A_{2,1}(t)+A_{2,2}(t)+A_{2,3}(t).
\end{split}
\end{equation}
As above we deal  first with the crucial   term in the decomposition associated to $A_{2},$that is   $A_{2,1}.$ 

Applying  Plancherel's identity  yields
\begin{equation*}
\begin{split}
A_{2,1}(t)&=\frac{1}{2}\int_{\mathbb{R}}uD_{x}^{\frac{2j+1-\alpha}{2}}\left\{R_{n}(\alpha+2)D_{x}^{\frac{2j+1-\alpha}{2}}u\right\}\,\mathrm{d}x.
\end{split}
\end{equation*}
We   fix $n$ such that   (\ref{eq21}) is satisfied. In this case we have to take  $a=\alpha+2$ and $\sigma=\frac{2j+1-\alpha}{2},$ to get $n=j.$  As occurs in the previous cases it  is possible for  $n=j+1.$ 
 
Thus, by construction $R_{j}(\alpha+2)$ is bounded in $L^{2}_{x}$ (see Proposition \ref{propo2}).

Then
\begin{equation*}
\begin{split}
|A_{2,1}(t)|&\lesssim \|u_{0}\|_{L^{2}_{x}}^{2}\left\|\widehat{D_{x}^{2j+3}(\chi_{\epsilon, b}^{2})}\right\|_{L^{1}_{\xi}},\\
\end{split}
\end{equation*} 
and 
\begin{equation*}
\int_{0}^{T}|A_{2,1}(t)|\,\mathrm{d}t \lesssim \|u_{0}\|_{L^{2}_{x}}^{2}\sup_{0\leq t\leq  T}\left\|\widehat{D_{x}^{2j+3}(\chi_{\epsilon, b}^{2})}\right\|_{L^{1}_{\xi}}.
\end{equation*}	
 Replacing  $P_{j}(\alpha+2)$ into $A_{2,2}$  
\begin{equation*}
\begin{split}
A_{2,2}(t)& 
=\left(\frac{\alpha+2}{4}\right)\int_{\mathbb{R}}\left(\mathcal{H}\partial_{x}^{j+1}u\right)^{2}(\chi_{\epsilon, b}^{2})'\,\mathrm{d}x\\
&\quad+ \left(\frac{\alpha+2}{2}\right)\sum_{l=1}^{j}c_{2l+1}(-1)^{l}4^{-l}\int_{\mathbb{R}}\left(D_{x}^{j-l+1}u\right)^{2}(\chi_{\epsilon, b}^{2})^{(2l+1)}\,\mathrm{d}x\\
&=A_{2,2,1}(t)+\sum_{l=1}^{j-1} A_{2,2,l}(t)+ A_{2,2,j}(t).
\end{split}
\end{equation*}
Note that $A_{2,2,1}$ is positive and it gives  the smoothing effect after  integration in time, and $A_{2,2,j}$ is bounded by using the local theory.
To handle the remainder terms we  recall that  by construction 
\begin{equation}\label{g2}
\left|(\chi_{\epsilon,b})^{(j)}(x)\right| \lesssim\,\chi_{\epsilon/3,b+\epsilon}'(x)\lesssim\chi_{\epsilon/9,b+10\epsilon/9}(x) \chi_{\epsilon/9,b+10\epsilon/9}'(x)
\end{equation}
 for $x\in\mathbb{R},$\,$j\in\mathbb{Z}^{+}.$

So that,  for $j> 2$
\begin{equation}\label{g1}
\begin{split}
\int_{0}^{T}|A_{2,2,l}(t)|\,\mathrm{d}t&\lesssim  \int_{0}^{T}\int_{\mathbb{R}}\left(D_{x}^{j-l+1}u\right)^{2}\chi_{\epsilon/3,b+\epsilon}'\,\mathrm{d}x\,\mathrm{d}t\\
&\lesssim \int_{0}^{T}\int_{\mathbb{R}}\left(D_{x}^{j-l+1}u\right)^{2}\chi_{\epsilon/9,b+10\epsilon/9} \chi_{\epsilon/9,b+10\epsilon/9}'\,\mathrm{d}x\,\mathrm{d}t,
\end{split}
\end{equation}
thus  if we apply (\ref{l2})  with $(\epsilon/9,b+4\epsilon/3)$ instead of $(\epsilon,b)$  we obtain  
\begin{equation*}
\int_{0}^{T}\int_{\mathbb{R}}\left(D_{x}^{j-l+1}u\right)^{2}(\chi_{\frac{\epsilon}{9},b+10\epsilon/9}\chi_{\frac{\epsilon}{9},b+10\epsilon/9}')\,\mathrm{d}x\,\mathrm{d}t\leq c^{*}_{l,2}
\end{equation*}
for $l=1,2,\dots j-1.$

Meanwhile, 
\begin{equation}\label{g3}
\begin{split}
A_{2,3}(t)& 
=\left(\frac{\alpha+2}{4}\right)\int_{\mathbb{R}}\left(\partial_{x}^{j+1}u\right)^{2}(\chi_{\epsilon,b}^{2})'\,\mathrm{d}x\\
&\quad+\left(\frac{\alpha+2}{4}\right)\sum_{l=1}^{j}c_{2l+1}(-1)^{l}4^{-l}\int_{\mathbb{R}}\left(\mathcal{H}D_{x}^{j-l+1}u\right)^{2}(\chi_{\epsilon, b}^{2})^{(2l+1)}\,\mathrm{d}x\\
&=A_{2,3,1}(t)+\sum_{l=1}^{j-1}A_{2,3,l}(t)+A_{2,3,j}(t)
\end{split}
\end{equation}
As we can see $A_{2,3,1}\geq 0$ and it represents the smoothing effect. Besides, applying  a similar argument to the employed in (\ref{g2})-(\ref{g3}) is possible to bound the  remainders terms in   (\ref{g3}).
Anyway,
\begin{equation*}
\int_{0}^{T}|A_{2,3,l}(t)|\,\mathrm{d}t\lesssim c^{*}_{l,2}\qquad  1\leq l\leq j-1.
\end{equation*}
\S.3\quad Only  remains to estimate    $A_{3}$ to finish this step.
\begin{equation}\label{eqA12}
\begin{split}
&D_{x}^{\frac{1-\alpha}{2}}\partial_{x}^{j}(u\partial_{x}u)\chi_{\epsilon, b}\\
&=-\frac{1}{2}\left[D_{x}^{\frac{1-\alpha}{2}}\partial_{x}^{j};\chi_{\epsilon, b}\right]\partial_{x}((u\chi_{\epsilon,b})^{2}+(u\widetilde{\phi_{\epsilon,b}})^{2}+(\psi_{\epsilon}u^{2}))\\
&\quad+\left[D_{x}^{\frac{1-\alpha}{2}}\partial_{x}^{j}; u\chi_{\epsilon, b}\right]\partial_{x}((u\chi_{\epsilon, b})+ (u\phi_{\epsilon,b})+(u\psi_{\epsilon}))
+ u\chi_{\epsilon, b}D_{x}^{\frac{1-\alpha}{2}}\partial_{x}^{j}(\partial_{x}u)\\
&=\widetilde{A_{3,1}}(t)+\widetilde{A_{3,2}}(t)+\widetilde{A_{3,3}}(t)+\widetilde{A_{3,4}}(t)+\widetilde{A_{3,5}}(t)+\widetilde{A_{3,6}}(t)+\widetilde{A_{3,7}}(t).
\end{split}
\end{equation}
Replacing (\ref{eqA12}) into $A_{3}$  and apply  H\"{o}lder's inequality 
\begin{equation*}
\begin{split}
A_{3}(t)
&=\sum_{1\leq k\leq 6}\int_{\mathbb{R}}\widetilde{A_{3,k}}(t)\,D_{x}^{\frac{1-\alpha}{2}}\partial_{x}^{j}u\,\chi_{\epsilon,b}\,\mathrm{d}x+\int_{\mathbb{R}}\widetilde{A_{3,7}}(t)\,D_{x}^{\frac{1-\alpha}{2}}\partial_{x}^{j}u\,\chi_{\epsilon,b}\,\mathrm{d}x\\
&\leq \sum_{1\leq k \leq 6}\|\widetilde{A_{3,k}}(t)\|_{L_{x}^{2}}\left\|D_{x}^{j+\frac{1-\alpha}{2}}u(t)\chi_{\epsilon,b}(\cdot+vt)\right\|_{L_{x}^{2}}+\int_{\mathbb{R}}\widetilde{A_{3,7}}(t)D_{x}^{\frac{1-\alpha}{2}}\partial_{x}^{j}u\chi_{\epsilon,b}\,\mathrm{d}x.\\
&=\left\|D_{x}^{j+\frac{1-\alpha}{2}}u(t)\chi_{\epsilon,b}(\cdot+vt)\right\|_{L_{x}^{2}}\sum_{1\leq m\leq 6}A_{3,k}(t)+ A_{3,7}(t).
\end{split}
\end{equation*}
The first factor on the right hand side is the quantity to be  estimated.

We  will start by estimating the easiest term.
\begin{equation*}
\begin{split}
A_{3,7}(t) 
&=-\frac{1}{2}\int_{\mathbb{R}}\partial_{x}u\,\chi_{\epsilon, b}^{2}\,(D_{x}^{\frac{1-\alpha}{2}}\partial_{x}^{j}u)^{2}\,\mathrm{d}x-\int_{\mathbb{R}}u\chi_{\epsilon, b} \,\chi_{\epsilon, b}'(D_{x}^{\frac{1-\alpha}{2}}\partial_{x}^{j}u)^{2}\,\mathrm{d}x\\
&=A_{3,7,1}(t)+A_{3,7,2}(t).
\end{split}
\end{equation*}
We have that 
\begin{equation*}
|A_{3,7,1}(t)|\lesssim\|\partial_{x}u(t)\|_{L^{\infty}_{x}}\int_{\mathbb{R}}\left(D_{x}^{\frac{1-\alpha}{2}}\partial_{x}^{j}u\right)^{2}\chi_{\epsilon, b}^{2}\,\mathrm{d}x,
\end{equation*}
where the last integral  is the quantity that we want to estimate, and the another factor will be controlled after integration in time.

After integrating in time and   Sobolev's embedding 
\begin{equation*}
\begin{split}
\int_{0}^{T}|A_{3,7,2}(t)|\,\mathrm{d}t&\lesssim \int_{0}^{T}\int_{\mathbb{R}}u\,(\chi_{\epsilon, b}^{2})'\left(D_{x}^{\frac{1-\alpha}{2}}\partial_{x}^{j}u\right)\mathrm{d}x\,\mathrm{d}t\\
&\lesssim \left(\sup_{0\leq t\leq T}\|u(t)\|_{H^{s(\alpha)+}_{x}}\right)\int_{0}^{T}\int_{\mathbb{R}}\left(D_{x}^{\frac{1-\alpha}{2}}\partial_{x}^{j}u\right)^{2}\,(\chi_{\epsilon, b}^{2})'\mathrm{d}x\,\mathrm{d}t
\end{split}
\end{equation*}
where the integral expression on the right hand side was already  estimated in \eqref{eq41}.

To handle the contribution coming from  $\widetilde{A_{3,1}}$  and $\widetilde{A_{3,2}}$  we apply a combination of \eqref{eq5}, \eqref{kpdl} and Lemma \ref{lema2}
to obtain 
\begin{equation}\label{d14.1}
\begin{split}
\|\widetilde{A_{3,1}}(t)\|_{L^{2}_{x}}
&\lesssim \left\|D_{x}^{j+\frac{1-\alpha}{2}}(u\chi_{\epsilon,b})\right\|_{L^{2}_{x}}\|u\|_{L^{\infty}_{x}}+\|u_{0}\|_{L_{x}^{2}}\|u\|_{L^{\infty}_{x}}
\end{split}
\end{equation}
and 
\begin{equation*}
\begin{split}
\|\widetilde{A_{3,2}}(t)\|_{L^{2}_{x}}
&\lesssim \left\|D_{x}^{j+\frac{1-\alpha}{2}}(u\widetilde{\phi_{\epsilon,b}})\right\|_{L^{2}_{x}}\|u\|_{L^{\infty}_{x}}+\|u_{0}\|_{L_{x}^{2}}\|u\|_{L^{\infty}_{x}}.
\end{split}
\end{equation*}
The condition on the supports of $\chi_{\epsilon, b}$ and $\psi_{\epsilon}$ 	combined with Lemma \ref{lemma1} 
implies 
\begin{equation*}
\begin{split}
\|\widetilde{A_{3,3}}(t)\|_{L^{2}_{x}}
&\lesssim\|u_{0}\|_{L^{2}_{x}}\|u\|_{L^{\infty}_{x}}.
\end{split}
\end{equation*}
By using (\ref{eq31}) and (\ref{kpdl})
\begin{equation*}
\begin{split}
\|\widetilde{A_{3,4}}(t)\|_{L^{2}_{x}}& 
\lesssim \|\partial_{x}(u\chi_{\epsilon,b})\|_{L^{\infty}_{x}}\|D_{x}^{j+\frac{1-\alpha}{2}}(u\chi_{\epsilon,b})\|_{L^{2}_{x}}
\end{split}
\end{equation*}
and 
\begin{equation*}
\begin{split}
\|\widetilde{A_{3,5}}(t)\|_{L^{2}_{x}}&
\lesssim\|\partial_{x}(u\chi_{\epsilon,b})\|_{L^{\infty}_{x}}\left\|D_{x}^{j+\frac{1-\alpha}{2}}(u\phi_{\epsilon,b})\right\|_{L^{2}_{x}}+\|\partial_{x}(u\phi_{\epsilon,b})\|_{L^{\infty}_{x}}\left\|D_{x}^{j+\frac{1-\alpha}{2}}(u\chi_{\epsilon,b})\right\|_{L^{2}_{x}}.
\end{split}
\end{equation*}
An application of    Lemma \ref{lemma1} leads to  
\begin{equation}\label{d15.1}
\begin{split}
\|\widetilde{A_{3,6}}(t)\|_{L^{2}_{x}}&=\left\|u\chi_{\epsilon,b}\partial_{x}D_{x}^{j+\frac{1-\alpha}{2}}(u\psi_{\epsilon})\right\|_{L^{2}_{x}}\lesssim \|u_{0}\|_{L^{2}_{x}}\|u\|_{L^{\infty}_{x}}.
\end{split}
\end{equation}
To complete the estimate in  (\ref{d14.1})-(\ref{d15.1})   we write
\begin{equation*}
\chi_{\epsilon,b }(x)+\phi_{\epsilon, b}(x)+\psi_{\epsilon}(x)=1\quad \forall x\in \mathbb{R};
\end{equation*}
then
\begin{equation*}
\begin{split}
D_{x}^{j+\frac{1-\alpha}{2}}(u\chi_{\epsilon, b})&=D_{x}^{j+\frac{1-\alpha}{2}}u \chi_{\epsilon, b}+ \left[D_{x}^{j+\frac{1-\alpha}{2}}; \chi_{\epsilon, b}\right](u\chi_{\epsilon, b}+u\phi_{\epsilon, b}+u\psi_{\epsilon})\\
&=I_{1}+I_{2}+I_{3}+I_{4}.
\end{split}
\end{equation*}
Notice that $\|I_{1}\|_{L_{x}^{2}}$ is the quantity to be  estimated. In contrast, 
$I_{4}$ is handled  by  using  Lemma \ref{lemma1}.
In regards to 
 $\|I_{2}\|_{L_{x}^{2}}$ and $\|I_{3}\|_{L_{x}^{2}}$  the  Lemma \ref{dlkp}  combined with  the local theory, and the step 2  corresponding to the case  $j-1$ produce the required bounds.   

By  Theorem \ref{thm11}  and  H\"{o}lder's inequality  
\begin{equation}\label{eq72.1}
\begin{split}
\left\|D_{x}^{j+\frac{1+\alpha}{2}}(u\phi_{\epsilon,b})\right\|_{L^{2}_{x}}&\lesssim \|u\|_{L_{x}^{4}}\left\|D_{x}^{j+\frac{1+\alpha}{2}}\phi_{\epsilon,b}\right\|_{L_{x}^{4}}+\left\|\sum_{\beta\leq j} \frac{1}{\beta!}\partial_{x}^{\beta}\phi_{\epsilon,b}D_{x}^{s,\beta}u\right\|_{L_{x}^{2}}\\
&\lesssim \|u_{0}\|_{L_{x}^{2}}^{1/2}\|u\|_{L_{x}^{\infty}}^{1/2}+ \sum_{\beta\in \mathbb{Q}_{1}(j)}\frac{1}{\beta!}\left\|\partial_{x}^{\beta}\phi_{\epsilon, b}D_{x}^{j-\beta+\frac{\alpha+1}{2}}u\right\|_{L_{x}^{2}} \\
&\quad +\sum_{\beta\in\mathbb{Q}_{2}(j)}\frac{1}{\beta!}\left\|\partial_{x}^{\beta}\phi_{\epsilon, b}\mathcal{H}D_{x}^{j-\beta+\frac{\alpha+1}{2}}u\right\|_{L_{x}^{2}}
\end{split}
\end{equation}
where $\mathbb{Q}_{1}(j), \mathbb{Q}_{2}(j)$ denotes  odd integers and even integers in $\{0,1,\dots,j\}$ respectively.

To estimate  the second term  in \eqref{eq72.1}, note that $\partial_{x}^{\beta}\phi_{\epsilon, b}$  is supported in $[\epsilon/4,b]$ then
\begin{equation*}
\begin{split}
\sum_{\beta\in \mathbb{Q}_{1}(j)}\frac{1}{\beta!}\left\|\partial_{x}^{\beta}\phi_{\epsilon, b}D_{x}^{j-\beta+\frac{\alpha+1}{2}}u\right\|_{L_{x}^{2}}&\lesssim\sum_{\beta\in \mathbb{Q}_{1}(j)}\frac{1}{\beta!}\left\|\mathbb{1}_{[\epsilon/8,b]}D_{x}^{j-\beta+\frac{\alpha+1}{2}}u\right\|_{L_{x}^{2}}\\
&\lesssim \sum_{\beta\in \mathbb{Q}_{1}(j)}\frac{1}{\beta!}\left\|\eta_{\epsilon/24,b+7\epsilon/24}D_{x}^{j-\beta+\frac{\alpha+1}{2}}u\right\|_{L_{x}^{2}}.
\end{split}
\end{equation*} 
Hence, after integrate in time and apply (\ref{l2})  with $(\epsilon,b)=(\epsilon/24, b+7\epsilon/24
)$  we obtain 
\begin{equation*}
\begin{split}
\sum_{\beta\in \mathbb{Q}_{1}(j)}\frac{1}{\beta!}\left\|\eta_{\epsilon/24,b+7\epsilon/24}D_{x}^{j-\beta+\frac{\alpha+1}{2}}u\right\|_{L^{2}_{T}L_{x}^{2}}&\lesssim \sum_{\beta\in \mathbb{Q}_{1}(j)}(c^{*}_{j-\beta,1})^{1/2}<\infty
\end{split}
\end{equation*}
by the induction hypothesis.

Analogously, we can handle the third  term in \eqref{eq72.1}   
\begin{equation*}
\begin{split}
\sum_{\beta\in \mathbb{Q}_{2}(j),\beta\neq j}\frac{1}{\beta!}\left\|\partial_{x}^{\beta}\phi_{\epsilon, b}\mathcal{H}D_{x}^{j-\beta+\frac{\alpha+1}{2}}u\right\|_{L^{2}_{T}L_{x}^{2}}&\lesssim \sum_{\beta\in \mathbb{Q}_{2}(j),\beta\neq j}(c^{*}_{j-\beta,1})^{1/2} + \|u\|_{L^{\infty}_{T}H_{x}^{\frac{3-\alpha}{2}}}\\
&<\infty.
\end{split}
\end{equation*}
Therefore, after integrate in time and apply H\"{o}lder's inequality  we have 
\begin{equation*}
\left\|D_{x}^{j+\frac{1+\alpha}{2}}(u\phi_{\epsilon,b})\right\|_{L_{T}^{2}L^{2}_{x}}<\infty.
\end{equation*}
Next, by interpolation and Young's inequality 
\begin{equation}\label{eq42}
\left\|D_{x}^{j+\frac{1-\alpha}{2}}(u\phi_{\epsilon, b})\right\|_{L_{T}^{2}L_{x}^{2}}\lesssim \left\|D_{x}^{j+\frac{1+\alpha}{2}}(u\phi_{\epsilon, b})\right\|_{L_{T}^{2}L_{x}^{2}}+\|u_{0}\|_{L_{x}^{2}}<\infty.
\end{equation}
 If  we apply (\ref{eq72.1})-(\ref{eq42}) then 
\begin{equation*}
\left\|D_{x}^{j+\frac{1-\alpha}{2}}(u\widetilde{\phi_{\epsilon, b}})\right\|_{L^{2}_{T}L_{x}^{2}} <\infty.
\end{equation*}
Finally, after collecting all information and apply  Gronwall's inequality  we obtain 
\begin{equation*}
\begin{split}
&\sup_{0\leq t\leq T}\left\|D_{x}^{\frac{1-\alpha}{2}}\partial_{x}^{j}u\chi_{\epsilon,b}\right\|_{L^{2}_{x}}^{2}+ \left\|\partial_{x}^{j+1}u\eta_{\epsilon,b}\right\|_{L^{2}_{T}L^{2}_{x}}^{2}+\left\|\mathcal{H}\partial_{x}^{j+1}u\eta_{\epsilon,b}\right\|_{L^{2}_{T}L^{2}_{x}}^{2}\leq c^{*}_{j,2}
\end{split}
\end{equation*}
where $c^{*}_{j,2}=c^{*}_{j,2}\left(\alpha;\epsilon;T; v;\|u_{0}\|_{H_{x}^{\frac{3-\alpha}{2}}}; \|D_{x}^{\frac{1-\alpha}{2}}\partial_{x}^{j}u_{0}\chi_{\epsilon, b}\|_{L_{x}^{2}}\right)$
for any $\epsilon>0,\,b\geq 5\epsilon$ and $v\geq0.$

This finishes the induction process.

To justify  the previous estimates  we shall follow the following argument of regularization.
For arbitrary initial data $u_{0}\in H^{s}(\mathbb{R})\, s>\frac{3-\alpha}{2},$
we  consider the regularized initial data $u_{0}^{\mu}=\rho_{\mu}*u_{0}$ with  $\rho\in C^{\infty}_{0}(\mathbb{R}),\, \supp \rho\subset (-1,1),\, \rho\geq 0,\, \|\rho\|_{L^{1}}=1$ and
\begin{equation*}
\rho_{\mu}(x)=\mu^{-1}\rho(x/\mu),\quad \mbox{for}\quad \mu>0.
\end{equation*}
The solution $u^{\mu}$ of the IVP (\ref{eq7})  corresponding to the smoothed data $u_{0}^{\mu}=\rho_{\mu}*u_{0},$ satisfies 
\begin{equation*}
u^{\mu}\in C([0,T]: H^{\infty}(\mathbb{R})),
\end{equation*}
  we shall remark  that the time of existence is independent of $\mu.$ 
  
Therefore, the smoothness of  $u^{\mu}$ allows us to  conclude that
\begin{equation*}
\begin{split}
&\sup_{0\leq t\leq  T}  \left\|\partial_{x}^{m}u^{\mu}\chi_{\epsilon, b}\right\|_{L^{2}_{x}}^{2}+ \left\|D_{x}^{m+\frac{1+\alpha}{2}}u^{\mu
}\right\|_{L^{2}_{T}L^{2}_{x}}^{2} +\left\|\mathcal{H}D_{x}^{m+\frac{1+\alpha}{2}}u^{\mu}\eta_{\epsilon, b}\right\|_{L^{2}_{T}L^{2}_{x}}^{2}  \leq c^{*}
\end{split}
\end{equation*}
where  ${\displaystyle 
c^{*}=c^{*}\left(\alpha; \epsilon;T;v;\left\|u_{0}^{\mu}\right\|_{H^{\frac{3-\alpha}{2}}_{x}}; \left\|\partial_{x}^{m}u^{\mu}_{0}\chi_{\epsilon, b}\right\|_{L^{2}_{x}}\right).}$
  In fact our next task is to  prove that the  constant  $c^{*}$ is  independent of the parameter $\mu.$

The independence from the parameter $\mu>0$ can be reached first noticing that
\begin{equation*}
\|u^{\mu}_{0}\|_{H^{\frac{3-\alpha}{2}}_{x}}\leq \|u_{0}\|_{H^{\frac{3-\alpha}{2}}_{x}}\left\|\widehat{\rho_{\mu}}\right\|_{L^{\infty}_{\xi}}=\|u_{0}\|_{H^{\frac{3-\alpha}{2}}_{x}}\|\rho_{\mu}\|_{L^{1}_{x}}=\|u_{0}\|_{H^{\frac{3-\alpha}{2}}_{x}}.
\end{equation*}

Next, since  $\chi_{\epsilon,b}(x)=0$ for $x\leq \epsilon,$ then restricting $\mu\in(0,\epsilon)$ it follows by Young's inequality 
\begin{equation*}
\begin{split}
\int_{\epsilon}^{\infty}(\partial_{x}^{m}u_{0}^{\mu})^{2}\,\mathrm{d}x&\leq \|\rho_{\mu}\|_{L^{1}_{\xi}}\|\partial_{x}^{m}u_{0}\|_{L^{2}_{x}((0,\infty))}=\|\partial_{x}^{m}u_{0}\|_{L^{2}_{x}((0,\infty))}.
\end{split}
\end{equation*}
Using the continuous dependence of  the  solution upon the data  we have that 
\begin{equation*}
\sup_{t\in[0,T]}\|u^{\mu}(t)-u(t)\|_{H^{\frac{3-\alpha}{2}}_{x}}\underrel[c]{\mu \to 0}{\longrightarrow} 0.
\end{equation*}
Combining this fact with the independence of  the constant $c^{*}$ from the parameter  $\mu$ , weak compactness  and Fatou's Lemma, the theorem  holds for  all $u_{0}\in H^{s}(\mathbb{R}),$\, $s>\frac{3-\alpha}{2}.$   
\end{proof}
\begin{rem}
	The proof of Theorem \ref{A} remains valid  for the defocussing  dispersive generalized Benjamin-Ono equation 
	\begin{equation*}
	\left\{
	\begin{array}{ll}
	\partial_{t}u-D_{x}^{\alpha+1}\partial_{x}u-u\partial_{x}u=0, & x,t\in\mathbb{R},\,0<\alpha<1, \\
	u(x,0)=u_{0}(x).&  \\
	\end{array} 
	\right.
	\end{equation*}
	In this direction, the propagation of regularity  holds for  $u(-x,-t),$ being  $u(x,t)$ a solution  of (\ref{eq7}). In other words, this means that for initial data satisfying the conditions (\ref{eq106}) and (\ref{clave1}) on the left hand side of the real line, the  Theorem \ref{A} remains valid backward in time.
\end{rem}
A consequence of the Theorem \ref{A} is the following corollary, that describe the asymptotic behavior of the function in \eqref{eqc1}.
\begin{cor}\label{cor4}
	Let $u\in C\left([-T,T]:H^{\frac{3-\alpha}{2}}(\mathbb{R})\right)$  be a solution  of the equation in (\ref{eq7}) described by Theorem \ref{A}. 
	
	Then, for any $t\in (0,T]$ and $\delta>0$
	\begin{equation}\label{corf}
	\int_{-\infty}^{\infty}\frac{1}{\langle x_{-}\rangle^{j+\delta}}\left(\partial_{x}^{j}u\right)^{2}(x,t)\,\mathrm{d}x\leq  \frac{c}{t},
	\end{equation}
	where $x_{-}=\max\{0,-x\},\, c$ is a positive constant and $\langle x\rangle:=\sqrt{1+x^{2}}.$
\end{cor}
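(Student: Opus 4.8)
The plan is to produce the factor $1/t$ from a time-weighted energy identity, thereby reducing the statement to the sup-in-time and time-integrated smoothing bounds already contained in Theorem \ref{A}. Since the equation is translation invariant I take $x_{0}=0$, and I fix $j\in\{1,\dots,m\}$, one of the indices for which \eqref{eqc1} holds. Write $w(x)=\langle x_{-}\rangle^{-(j+\delta)}$ (smoothed near $x=0$); thus $w$ is non-decreasing, $w\equiv 1$ on $\{x\ge0\}$, $0\le w\le 1$, $w'\ge 0$, and $w'(x)\to 0$ polynomially as $x\to-\infty$. Put $E(\tau)=\int_{\mathbb R}w\,(\partial_{x}^{j}u)^{2}(x,\tau)\,\mathrm{d}x$.

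First I would use the elementary identity
\[ \frac{\mathrm{d}}{\mathrm{d}\tau}\bigl(\tau E(\tau)\bigr)=E(\tau)+\tau E'(\tau), \qquad\text{so}\qquad t\,E(t)=\int_{0}^{t}E(\tau)\,\mathrm{d}\tau+\int_{0}^{t}\tau\,E'(\tau)\,\mathrm{d}\tau. \]
Differentiating $E$ and substituting $\partial_{x}^{j}$ applied to \eqref{eq7}, the dispersive contribution to $E'$ is rewritten through the localization of derivatives of Lemma \ref{prop1} (with $\varphi=w$, $f=\partial_{x}^{j}u$), which yields the non-negative smoothing density $\tfrac{\alpha+2}{4}\bigl(|D_{x}^{\frac{\alpha+1}{2}}\partial_{x}^{j}u|^{2}+|D_{x}^{\frac{\alpha+1}{2}}\mathcal H\partial_{x}^{j}u|^{2}\bigr)w'$ plus the remainder $\tfrac12\,\partial_{x}^{j}u\,R_{0}(\alpha+2)\partial_{x}^{j}u$, while the nonlinear contribution is organized exactly as the terms $A_{3}$ in the proof of Theorem \ref{A}. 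To read off $E(t)\le c/t$ it then suffices to bound the right-hand side of the identity by a constant independent of $t$: the first integral is dominated by $\int_{0}^{T}E$, and in the second the only term of bad sign for an upper bound is the positive smoothing density, which, being multiplied by $\tau\le T$, is controlled by the time-integrated weighted smoothing; the $R_{0}$-term is absorbed using the $L^{2}$-boundedness of $R_{0}(\alpha+2)$ (Proposition \ref{propo2}) and the nonlinear term using $\|\partial_{x}u\|_{L^{1}_{T}L^{\infty}_{x}}<\infty$.

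Thus everything reduces to the two time-integrated weighted inequalities $\int_{0}^{T}\!\int_{\mathbb R}w\,(\partial_{x}^{j}u)^{2}\,\mathrm{d}x\,\mathrm{d}\tau\le c$ and $\int_{0}^{T}\!\int_{\mathbb R}w'\bigl(|D_{x}^{\frac{\alpha+1}{2}}\partial_{x}^{j}u|^{2}+|D_{x}^{\frac{\alpha+1}{2}}\mathcal H\partial_{x}^{j}u|^{2}\bigr)\,\mathrm{d}x\,\mathrm{d}\tau\le c$. I would prove these by decomposing $\mathbb R=\{x\ge0\}\cup\bigcup_{n\ge1}\{-n-1\le x<-n\}$. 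On $\{x\ge0\}$ one has $w\equiv1$, $w'\equiv0$, and the bound follows from \eqref{eqc1} with $v=0$ and $\epsilon\downarrow0$ (the strip adjacent to the front being absorbed by the smoothing term). On the $n$-th interval to the left I apply \eqref{eqc1} and \eqref{l1} with a velocity $v=v_{n}$ chosen so that the moving front $\epsilon-v_{n}\tau$ clears $-n-1$; since $w\le\langle n\rangle^{-(j+\delta)}$ and $w'\le\langle n\rangle^{-(j+\delta)-1}$ there, and $j\ge1$, $\delta>0$, the constants produced by Theorem \ref{A}, summed against $\langle n\rangle^{-(j+\delta)}$, form a convergent, $t$-independent series.

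The heart of the matter is this last summation: one must verify that the constants in \eqref{eqc1}/\eqref{l1} grow only mildly in the velocity $v_{n}$, so that after multiplication by the decaying weight $\langle n\rangle^{-(j+\delta)}$ the series converges to a bound independent of $t$. This requires a quantitative pass through the energy estimates of Theorem \ref{A}, tracking the $v$-dependence of the terms $A_{1}$, $A_{2}$, $A_{3}$ at each induction step. A second, more technical point is that one cannot feed the smooth but non-compactly-supported weight $w$ directly into the commutator expansion, since Lemma \ref{prop1} and Proposition \ref{propo2} require $w'\in C_{0}^{\infty}$ and a finite $\|\widehat{D^{a+2\sigma}w}\|_{L^{1}_{\xi}}$; I would circumvent this by carrying out all commutator computations at the level of the compactly supported cutoffs $\chi_{\epsilon,b}$ of Theorem \ref{A} and only summing the resulting localized estimates against $w$, which is legitimate because $w'\ge0$ is a smooth superposition of such localized pieces.
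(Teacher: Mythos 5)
Your route is genuinely different from the paper's, and it contains a gap I do not see how to close. The paper never forms a time-weighted energy identity: for fixed $t$ it applies \eqref{eqc1} with velocity $v\sim a/t$ to bound $\int_{-a}^{\epsilon}(\partial_{x}^{j}u)^{2}(x,t)\,\mathrm{d}x$ by a constant growing polynomially in $a/t$, and then invokes the Segata--Smith summation lemma (Lemma \ref{lem2} with Remark \ref{ref2}) in the \emph{spatial} variable to convert that growth into the weight $\langle x_{-}\rangle^{-(j+\delta)}$ and the factor $t^{-1}$. The $1/t$ is thus produced by the velocity dependence of the constants of Theorem \ref{A}, not by the identity $\tfrac{\mathrm{d}}{\mathrm{d}\tau}(\tau E)=E+\tau E'$.

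The gap is in your first reduction: you need $\int_{0}^{T}E(\tau)\,\mathrm{d}\tau=\int_{0}^{T}\!\int_{\mathbb R}w\,(\partial_{x}^{j}u)^{2}\,\mathrm{d}x\,\mathrm{d}\tau\le c$, and in particular, since $w\equiv1$ on $\{x\ge0\}$, that $\int_{0}^{T}\!\int_{0}^{\infty}(\partial_{x}^{j}u)^{2}\,\mathrm{d}x\,\mathrm{d}\tau\le c$. This does not follow from ``\eqref{eqc1} with $v=0$ and $\epsilon\downarrow0$'': the constant in \eqref{eqc1} depends on $\epsilon$ through quantities such as $\|\widehat{D_{x}^{4+\alpha}\chi_{\epsilon,b}^{2}}\|_{L^{1}_{\xi}}$ and is not uniform as $\epsilon\downarrow0$, and the moving front $\epsilon'-v\tau$ reaches the strip $\{0\le x\le\epsilon\}$ only for $\tau\ge\epsilon'/v$, so the corner $\{0<x<\epsilon\}\times\{0<\tau<\epsilon'/v\}$ is never covered by a single application of Theorem \ref{A}; covering it forces $v\sim\epsilon'/\tau$ and reproduces exactly the $1/\tau$ blow-up you are trying to prove. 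In other words $E(\tau)$ is only expected to satisfy $E(\tau)\lesssim1/\tau$, whose time integral diverges logarithmically, so the first term on the right-hand side of your identity is not finite a priori and the step is circular. (The usual Kato-type time-weighted argument avoids this by choosing the weight so that the mass term $\int(\partial_{x}^{j}u)^{2}\varphi$ is dominated by the previous level's time-integrated smoothing, i.e.\ $\varphi\lesssim\chi_{\epsilon,b}\chi_{\epsilon,b}'$ supported in a channel; your $w$, being identically $1$ on a half-line, does not have this property.) The other issue you flag --- that the constants of \eqref{eqc1} and \eqref{l1} must grow only polynomially in $v$ for the left-half-line summation to converge --- is real but is shared by the paper's own proof, so I would not count it as the decisive defect; the circularity above is.
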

For the proof \eqref{corf} we use the following  lemma  provided by  Segata and Smith \cite{SEGATASMITH}.
\begin{lem}\label{lem2}
	Let $f:[0,\infty)\longrightarrow [0,\infty)$ be a  continuous function. If for $a>0$
	\begin{equation*}
	\int_{0}^{a} f(x)\,\mathrm{d}x\leq c a^{p},
	\end{equation*}
	then for every $\delta>0,$ there  
	\begin{equation*}
	\int_{0}^{\infty} \frac{f(x)}{\langle x \rangle^{p+\delta}}\,\mathrm{d}x\leq c(p).
	\end{equation*}
\end{lem}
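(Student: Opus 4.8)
The plan is to reduce everything to the cumulative integral $F(a)=\int_{0}^{a}f(x)\,\mathrm{d}x$, which by hypothesis obeys $F(a)\le ca^{p}$ for all $a>0$, and then to integrate by parts (the continuous analogue of Abel summation), moving the derivative off $f$ and onto the decaying weight $\langle x\rangle^{-(p+\delta)}$ so that the growth bound on $F$ can be used directly. First I would fix $R>0$ and write, using $F(0)=0$ together with $\frac{\mathrm{d}}{\mathrm{d}x}\langle x\rangle^{-(p+\delta)}=-(p+\delta)\,x\,\langle x\rangle^{-(p+\delta+2)}$,
\begin{equation*}
\int_{0}^{R}\frac{f(x)}{\langle x\rangle^{p+\delta}}\,\mathrm{d}x
=\frac{F(R)}{\langle R\rangle^{p+\delta}}
+(p+\delta)\int_{0}^{R}\frac{x\,F(x)}{\langle x\rangle^{p+\delta+2}}\,\mathrm{d}x.
\end{equation*}
All terms are nonnegative since $f\ge 0$ forces $F\ge 0$ and the weight and its negated derivative are positive for $x>0$.

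Next I would let $R\to\infty$. For the boundary term, the elementary bound $\langle R\rangle\ge R$ combined with $F(R)\le cR^{p}$ gives
\begin{equation*}
\frac{F(R)}{\langle R\rangle^{p+\delta}}\le \frac{cR^{p}}{R^{p+\delta}}=cR^{-\delta}\xrightarrow[R\to\infty]{}0,
\end{equation*}
which is exactly where the strict positivity $\delta>0$ first enters. For the remaining integral I would insert $F(x)\le cx^{p}$ to obtain
\begin{equation*}
(p+\delta)\int_{0}^{\infty}\frac{x\,F(x)}{\langle x\rangle^{p+\delta+2}}\,\mathrm{d}x
\le c(p+\delta)\int_{0}^{\infty}\frac{x^{p+1}}{\langle x\rangle^{p+\delta+2}}\,\mathrm{d}x,
\end{equation*}
and then verify that this last integral is finite: near $x=0$ the integrand behaves like $x^{p+1}$, which is integrable, while as $x\to\infty$ it behaves like $x^{p+1}x^{-(p+\delta+2)}=x^{-1-\delta}$, integrable precisely because $\delta>0$. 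The resulting bound depends only on $p$ and $\delta$, yielding the constant $c(p)$ in the statement.

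The argument is essentially routine, so there is no genuinely hard step; the only point requiring care is the justification that both the boundary term vanishes and the tail integral converges, and both hinge on the same mechanism, namely that $\delta>0$ supplies the extra decay $x^{-1-\delta}$ at infinity. One should also note that the integration by parts is legitimate because $F$ is absolutely continuous (as the integral of the continuous, hence locally integrable, function $f$) and the weight is smooth, so no distributional subtleties arise; if one prefers to avoid invoking absolute continuity, the same identity can be derived by Fubini's theorem after writing $\langle x\rangle^{-(p+\delta)}=(p+\delta)\int_{x}^{\infty}y\langle y\rangle^{-(p+\delta+2)}\,\mathrm{d}y$ and interchanging the order of integration, which makes the nonnegativity of every term manifest.
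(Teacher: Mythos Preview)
Your proof is correct. It differs from the paper's, which merely indicates that the argument ``follows by using a smooth dyadic partition of unity of $\mathbb{R}^{+}$''. In that approach one splits $[0,\infty)$ into blocks $[2^{k},2^{k+1}]$, observes that the weight $\langle x\rangle^{-(p+\delta)}$ is comparable to $2^{-k(p+\delta)}$ on each block while the hypothesis gives $\int_{0}^{2^{k+1}}f\le c\,2^{(k+1)p}$, and then sums the resulting geometric series $\sum_{k}2^{-k\delta}$. Your route via integration by parts (equivalently, writing the weight as $\langle x\rangle^{-(p+\delta)}=(p+\delta)\int_{x}^{\infty}y\langle y\rangle^{-(p+\delta+2)}\,\mathrm{d}y$ and applying Fubini) is the continuous analogue of the same Abel-summation idea and is arguably cleaner here, since it converts the growth bound on $F$ directly into a convergent integral without any partitioning. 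The dyadic argument, on the other hand, is more flexible: it requires no smoothness of the weight and transfers verbatim to discrete or higher-dimensional settings. Both methods isolate the same mechanism---the extra $\delta>0$ of decay producing the summable tail $x^{-1-\delta}$ (respectively $2^{-k\delta}$)---so neither is deeper than the other. One minor remark: your claim that $x^{p+1}$ is integrable near the origin tacitly assumes $p>-2$; this is harmless in the paper's application (where $p=j\ge 2$), but you might note the implicit restriction.
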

\begin{proof}
	The proof follows by using a smooth dyadic partition of unit of $\mathbb{R}^{+}.$
	\end{proof}

\begin{rem}\label{ref2}
	Observe that the lemma
	also applies when integrating a non-negative function on the interval $[-(a+\epsilon), -\epsilon],$
	implying decay on the left half-line.
\end{rem}

\begin{proof}[Proof of Corollary \ref{cor4}]
		We shall recall that Theorem \ref{A} with $x_{0}=0$ asserts that any $\epsilon>0$
	\begin{equation*}
	\sup_{t\in[0,T]} \int_{\epsilon-vt}^{\infty}\left(\partial_{x}^{j}u\right)^{2}(x,t)\,\mathrm{d}x\leq c^{*}.
	\end{equation*}
	
	For fixed  $t\in [0,T]$  we split the integral term  as follows
	\begin{equation*}
	\begin{split}
	\int_{\epsilon-vt}^{\infty} \left(\partial_{x}^{j}u\right)^{2}(x,t)\,\mathrm{d}x&=\int_{\epsilon-vt}^{\epsilon}	\left(\partial_{x}^{j}u\right)^{2}(x,t)\,\mathrm{d}x+\int_{\epsilon}^{\infty}\left(\partial_{x}^{j}u\right)^{2}(x,t)\,\mathrm{d}x.
	\end{split}
	\end{equation*}
	The second term in the right hand side is easily bounded by using Theorem \ref{A} with $v=0$. So that, we just need to estimate the first integral in the right hand side.  
	
	Notice  that after making a change of variables, 
	\begin{equation*}
	\int_{\epsilon-vt}^{\epsilon}\left(\partial_{x}^{j}u\right)^{2}(x,t)\,\mathrm{d}x=\int_{-(\epsilon-vt)}^{-\epsilon}\left(\partial_{x}^{j}u\right)^{2}(x+2\epsilon,t)\,\mathrm{d}x\leq c^{*}. 
	\end{equation*}
	So that, by using the lemma \ref{lem2} and the  remark \ref{ref2} we  find 
	\begin{equation*}
	\begin{split}
	\int_{-\infty}^{-\epsilon}\frac{1}{\langle x+2\epsilon\rangle^{j+\delta}}\left(\partial_{x}^{j}u\right)^{2}(x+2\epsilon,t)\,\mathrm{d}x&=\int_{-\infty}^{\epsilon}\frac{1}{\langle x\rangle^{j+\delta}}\left(\partial_{x}^{j}u\right)^{2}(x,t)\,\mathrm{d}x\leq \frac{c^{*}}{t^{j}}.
	\end{split}
	\end{equation*} 
	In summary, we have proved that  for all $j\in \mathbb{Z}^{+},\,j\geq 2$ and any $\delta>0,$
	\begin{equation}\label{eq52}
	\int_{-\infty}^{\epsilon}\frac{1}{\langle x\rangle^{j+\delta}}\left(\partial_{x}^{j}u\right)^{2}(x,t)\,\mathrm{d}x\leq  \frac{c^{*}}{t},
	\end{equation}
	and 
	\begin{equation}\label{eq53}
	\int_{\epsilon}^{\infty}\left(\partial_{x}^{j}u\right)^{2}(x,t)\,\mathrm{d}x\leq c^{*}.
	\end{equation}
	If we apply the lemma \ref{lem2} to \eqref{eq53} we  obtain a more extra decay in the right hand side, this  allow us to   obtain a  uniform expression that combines \eqref{eq52} and \eqref{eq53}, that is, there exist a constant $c$ such that 
	for any $t\in (0,T]$ and $\delta>0$
	\begin{equation*}
	\int_{-\infty}^{\infty}\frac{1}{\langle x_{-}\rangle^{j+\delta}}\left(\partial_{x}^{j}u\right)^{2}(x,t)\,\mathrm{d}x\leq  \frac{c}{t}.
	\end{equation*}
	
\end{proof}
\section{Acknowledgments}
The results of this paper are part of the author's Ph.D dissertation at  IMPA-Brazil. He gratefully  acknowledges  the encouragement and assistance  of his advisor, Prof. F. Linares. He also express appreciation for the careful reading of the manuscript done   by  R. Freire  and O. Ria\~{n}o. The author also thanks to Prof. Gustavo Ponce for the stimulating conversation on this topic. The author is grateful to the referees for their constructive input and suggestions.

\end{document}